\renewcommand{\R}{\mathbb{R}}
\renewcommand{\T}{\mathbb{T}}
\newcommand{\Z}{\mathbb{Z}}
\newcommand{\dd}{\mathrm{d}}
\newcommand{\sqrbrak}[1]{\left[#1\right]}
\newcommand{\norm}[1]{\left\Vert#1\right\Vert}
\newcommand{\set}[1]{\left\{#1\right\}}
\newcommand{\brkt}[1]{\left(#1\right)}
\newcommand{\sqbrkt}[1]{\left[#1\right]}
\newcommand{\abs}[1]{\left|#1\right|}
\newcommand{\pr}[2]{\langle{#1,#2}\rangle}
\renewcommand{\p}[1]{\langle{#1}\rangle}
\renewcommand{\d}{\partial}
\newcommand{\supp}{\mathop{\rm supp}}
\renewcommand{\S}{C^\infty_0(\R^n)}
\newtheorem{thm}{Theorem}[section] %
\newtheorem{lem}[thm]{Lemma}     %
\newtheorem{cor}[thm]{Corollary}
\newtheorem{prop}[thm]{Proposition}
\newtheorem{definition}[thm]{Definition}
\newtheorem{remark}[thm]{Observation}
\newtheorem{example}[thm]{Example}
 \title[Rough Fourier integral operators]
{Estimates for rough Fourier integral and pseudodifferential operators and applications to the boundedness of multilinear operators}
\author{Salvador Rodr\'iguez-L\'opez} 
\email{salvador@math.uu.se}
\keywords{Fourier integral operators; Pseudodifferential operators; Bilinear pseudodifferential operators, Bilinear Fourier integral operators}
\subjclass[2000]{35S30 (primary), 42B20, 42B99 (secondary)}
\author{Wolfgang Staubach}
\email{wulf@math.uu.se}
\address{Department of Mathematics, Uppsala University, Uppsala, SE 75106,  Sweden}
\thanks{Both authors were supported by  the EPSRC First Grant Scheme reference number EP/H051368/1. The first author is also partially supported by the grant MTM2010-14946.}
\begin{document}
\begin{abstract}
We study the boundedness of rough Fourier integral and pseudodifferential operators, defined by general rough H\"ormander class amplitudes, on Banach and quasi-Banach $L^p$ spaces.  Thereafter we apply the aforementioned boundedness in order to improve on some of the existing boundedness results for H\"ormander class bilinear pseudodifferential operators and certain classes of bilinear (as well as multilinear) Fourier integral operators. For these classes of amplitudes, the boundedness of the aforementioned Fourier integral operators turn out to be sharp. Furthermore we also obtain results for rough multilinear operators.
\end{abstract}
\maketitle
\section{Introduction and summary of the results}
A (linear) Fourier integral operator or FIO for short, is an operator that can be written locally in the form
\begin{align*}
   T_{a}f(x)= (2\pi)^{-n} \int_{\R^n} e^{i\varphi(x,\xi)} a(x,\xi) \widehat{f}(\xi) \, \dd\xi,
\end{align*}
where $a(x,\xi)$ is the {\it{amplitude}}, $\varphi(x,\xi)$ is the {\it{phase function}} and $f$ belongs to $\S$. In case the phase function $\varphi(x,\xi)=\langle x, \xi \rangle$, the Fourier integral operator is called a pseudodifferential operator, which in what follows will be abbreviated as $\Psi$DO. The study of these operators, which are intimately connected to the theory of linear partial differential operators, has a long history. There is a large body of results concerning the regularity, e.g. the $L^p$ boundedness, of FIOs and $\Psi$DOs, but due to the lack of space we only mention those investigations that are of direct relevance to the current paper.

The most widely used class of amplitudes are those introduced by H\"ormander in \cite{Hor}, the so called $S^{m}_{\varrho, \delta}$ class, that consists of $a(x,\xi)\in C^{\infty}(\R^{n} \times \R^n)$ with $$|\partial^{\alpha}_{\xi} \partial^{\beta}_{\xi} a(x,\xi)| \leq C_{\alpha \beta} (1+|\xi|)^{m-\varrho|\alpha|+\delta|\beta|},$$ $m\in \R$, $\varrho, \delta \in[0,1]$.
For phase functions one usually assumes that $\varphi\in C^{\infty}(\R^{n} \times \R^n\setminus 0)$ is homogeneous of degree $1$ in the frequency variable $\xi$ and satisfies the {\it{non-degeneracy condition}}, that is the mixed Hessian matrix $[\frac{\partial^2 \varphi}{\partial x_j\partial \xi_k}]$ has non-vanishing determinant.

 The sharp local $L^p$ ($p\in(1,\infty)$) boundedness of FIOs, under the assumptions of $a(x,\xi)\in S^{m}_{\varrho, 1-\varrho}$ being compactly supported in the spatial variable $x$ and $\varrho\in[\frac{1}{2},1]$, $m=(\varrho-n)|1/p -1/2|$ was established by A. Seeger, C. D. Sogge and E.M. Stein \cite{SSS}. The global $L^p$ boundedness of FIOs (i.e. boundedness without the assumption of compact spatial support of the amplitude) has also been investigated in various contexts and here we would like to mention boundedness of operators with smooth amplitudes in the so called $\mathbf{SG}$ classes, due to E. Cordero, F. Nicola and L. Rodino in \cite{CNR1}; the boundedness of operators with amplitudes in $S^{m}_{1,0}$ on the space of compactly supported distributions whose Fourier transform is in $L^p(\R^{n})$ (i.e. the $\mathcal{F}L^p$ spaces) due to Cordero, Nicola and Rodino in \cite{CNR2} and Nicola's refinement of this investigation in \cite{Nic}; and finally, S. Coriasco and M. Ruzhansky's global $L^{p}$ boundedness of Fourier integral operators \cite{CorRuzh}, with amplitudes that belong to a certain subclass of $S^{0}_{1, 0}.$

In this paper we consider the problem of boundedness of Fourier integral operators with amplitudes that are non-smooth in the spatial variables and exhibit an $L^p$ type behaviour in those variables for $p\in[1, \infty].$ This is a continuation of the investigation of boundedness of rough pseudodifferential operators made by C. Kenig and W. Staubach \cite{KenStaub} and that of regularity of rough Fourier integral operators carried out by D. Dos Santos Ferreira and W. Staubach \cite{DosStaub}, where the boundedness of the aforementioned operators were established under the condition that the corresponding amplitudes are $L^\infty$ functions in the spatial variables.

One motivation for the study of these specific classes of rough oscillatory integrals whose amplitudes have $L^p$ spatial behaviour is, as will be demonstrated in this paper,  its applicability in proving boundedness results for multilinear pseudodifferential and Fourier integral operators.

A study of rough pseudodifferential operators without any regularity assumption in the spatial variables were carried out in \cite{KenStaub} and A. Stefanov's paper \cite{Stefanov}, where the irregularity of the symbols of the operators are of $L^\infty$ type. Prior to these investigations, a systematic study of pseudodifferential operators with limited smoothness was carried out by M. Taylor in \cite{Tay} The corresponding problem for the Fourier integral operators were investigated in \cite{DosStaub}.
In the case of $L^p$ spatial behaviour, an investigation of boundedness of pseudodifferential operators was carried out by N. Michalowski, D. Rule and W. Staubach in \cite{MRS}.
To our knowledge, there has been no investigation of regularity of rough Fourier integral operators with $L^p$ spatial behaviour prior to the one in the present paper.

Turning to the multilinear setting, in our investigation we shall consider {\it{multilinear Fourier integral operators}} of the form
\begin{equation*}
T_a(f_1, \dots, f_N) (x)= \int_{\R^{Nn}} a(x,\xi_1, \dots, \xi_N)\, e^{ i\sum_{j=1}^{N} \varphi_{j}(x, \xi_{j})} \, \prod_{j=1}^{N}\widehat{f}(\xi_j)\, \dd \xi_1\dots \dd \xi_N.
\end{equation*}
The amplitude $a(x,\xi_{1},\dots,\xi_N)$ is usually assumed to be smooth in all the variables, satisfying an estimate of the type
\begin{equation*}
\vert \partial^{\beta}_x \partial_{\xi_{1}}^{\alpha_1}\dots\partial_{\xi_{N}}^{\alpha_N}a(x,\xi_{1},\dots,\xi_N)\vert\leq C_{\alpha_{1}\dots \alpha_{N}, \beta} \brkt{1+\abs{\xi_{1}}+\ldots \abs{\xi_{N}}}^{m-\varrho\sum_{j=1}^{N}\abs{\alpha_j}+ \delta |\beta|},
\end{equation*}
for some $m \in \R$, $\varrho, \delta\in[0,1]$ and all multi-indices $\alpha_1,\dots, \alpha_N , \beta$ in $\Z_{+}^n.$  However in this paper we shall also investigate multilinear FIOs that are rough (i.e. $L^p$) in the spatial variable $x$. The phase functions $\varphi_{j} (x,\xi_{j})$ in the definition of the multilinear FIO  are assumed to be $C^{\infty}(\R^n \times \R^n\setminus\{0\})$ and homogeneous of degree $1$ in their frequency variables. Furthermore, we require that the phase functions verify the {\it{strong non-degeneracy conditions}} $|\det\,\d^{2}_{x, \xi} \varphi_{j}(x,\xi)| \geq c_{j} >0$ for $j=1,\dots, N.$\\

Bilinear FIOs of the type above already appear in applications, for example in the study of non-linear wave equations in the work of D. Foschi and S. Klainerman \cite{FoscKlain}, which serves as our second motivation to consider these specific types of operators.

 When the phase functions $\varphi_{j}(x, \xi_{j})=\langle x,\xi_{j}\rangle$, the above operators are called {\it{multilinear pseudodifferential operators}}. There exists a great amount of literature concerning these operators, in particular for those that can be fit within the realm of multilinear Calder\'on-Zygmund theory. This case was thoroughly investigated and optimal results were obtained in the seminal work of L. Grafakos and R. Torres \cite{GrafTor}. But, for more general H\"ormander classes of bilinear operators that fall outside the scope of Calder\'on-Zygmund theory, there has been comparatively little amount of activity. To remedy this situation, attempts were made in \cite{MRS} and also in a work by A. Benyi, F. Bernicot, D. Maldonado, V. Naibo and R. Torres \cite{BMNT, BBMNT}, where the results in \cite{MRS} were improved an extended in many directions.

For the multilinear Fourier integral operators, the situation is rather different in that their study started recently in the paper of  L.~Grafakos and M.~Peloso \cite{GrafPelo}. In that paper the authors considered operators with phase functions that are  more general than those that will be considered here, but with more restrictive conditions on the support of the amplitudes involved and on their order and type. However, Grafakos and Peloso \cite{GrafPelo} also obtained results concerning boundedness of bilinear Fourier integral operators of the exact same type that are considered in this paper, and one of our goals here is to provide further extensions in that direction.

To summarize, the aims of this paper are to extend the existing boundedness results for rough linear $\Psi$DOs (e.g. those in \cite{KenStaub}) and FIOs (e.g. \cite{DosStaub}) and as a bi-product, improve on the results in \cite{MRS} and  \cite{BBMNT} on boundedness of H\"ormander class bilinear pseudodifferential operators, and also on those in \cite{GrafPelo} concerning bilinear FIOs that coincide with the ones considered here. Moreover we will also establish sharp results concerning boundedness of certain classes of Fourier integral operators with product type symbols (see Definition \ref{definition multilinear product type amplitudes}) and thereby generalize a result in \cite{Bernicot} to the setting of multilinear FIOs. We would also like to mention that several results here concerning multilinear operators are also valid without any smoothness assumptions in the spatial variable of the amplitudes .

\section{Classes of linear amplitudes and non-degenerate phase functions}\label{linear amplitudes}
In this section we define the classes of linear amplitudes with both smooth and rough spatial behaviour and also the class of phase functions that appear in the definition of operators treated here. In the sequel we use the notation $\langle \xi\rangle$ for $(1+|\xi|^2)^{1/2}.$ The following classical definition of amplitudes/symbols is due to H\"ormander \cite{Hor}.
\begin{definition}\label{definition of hormander amplitudes}
Let $m\in \R$, $0\leq \varrho,\delta\leq 1.$ A function $a(x,\xi)\in C^{\infty}(\R^{n} \times\R^{n})$ belongs to the class $S^{m}_{\varrho,\delta}$, if for any multi-indices $\alpha,\beta$
   it satisfies
   \begin{align*}
      \sup_{\xi \in \R^n} \langle \xi\rangle ^{-m+\varrho\vert \alpha\vert- \delta |\beta|}
      |\partial_{\xi}^{\alpha}\partial_{x}^{\beta}a(x,\xi)|< +\infty.
   \end{align*}
\end{definition}
We shall also deal with the class $L^{p}S^m_{\varrho}$ of rough amplitudes/symbols introduced by Michalowski, Rule and Staubach in \cite{MRS}, which is an extension of that introduced by Kenig and Staubach in \cite{KenStaub}.
\begin{definition} \label{LpSmrho definition}
Let $1 \leq p \leq \infty$, $m \in \R$ and $0\leq \varrho \leq 1$. A function $a(x,\xi)$, $x,\xi\in \R^n$ belongs to the class $L^p S^m_{\varrho},$ if $a(x,\xi)$ is measurable in $x\in \R^n ,$ $a(x,\xi)\in C^\infty(\R^n_\xi)$ a.e. $x\in \R^n$, and for each multi-index $\alpha$, there exists a constant $C_\alpha$ such that
\begin{equation*}
\|\partial_\xi^\alpha a(\cdot,\xi)\|_{L^p (\R^n)} \leq C_\alpha \langle \xi\rangle^{m-\varrho |
\alpha|}.
\end{equation*}
Here we also define the associated seminorms
\[
    \abs{a}_{p,m,s}=\sum_{\abs{\alpha}\leq s} \sup_{\xi\in \R^n}  \langle \xi\rangle^{\varrho |\alpha|-m} \norm{\partial_\xi^\alpha a(\cdot,\xi)}_{L^p (\R^n)}.
\]
\end{definition}
\begin{example} If $b\in L^p$ and ${a}(x,\xi)\in L^\infty S^m_\varrho$ then $b(x){a}(x,\xi)\in L^pS^m_\varrho$. In particular, the same holds for ${a}(x,\xi)\in S^{m}_{\varrho,\delta},$ with any $0\leq \varrho, \delta\leq 1.$
\end{example}

\begin{example}
 Take $\psi \in C_0^\infty(\R)$ with support in $[-1,1],$ and $h$ be an unbounded function in the Zygmund class $L_{\rm exp}[-1,1]$ (see \cite[Chp. 4]{BenSharp}). Then $e^{i\xi h(x)}\psi(x)\in L^pS^m_\varrho$. In particular $e ^{i\xi \log |x|}\psi(x)$ belongs to $L^pS^0_0$ for any $p<\infty$. Observe that in this case, for every $x\neq 0$,  $a(x,\xi)\in C^\infty$ and $\Vert \partial^\alpha_\xi a(\cdot,\xi)\Vert_{L^p}<\infty$ for all $p\neq \infty$, but for any $\alpha>0$, $\Vert\partial^\alpha_\xi a(\cdot,\xi)\Vert_{L^\infty}=+\infty$.

More generally, if $h, \psi$ are as above and $\sigma$ is a real valued function in $S^{m}_{\varrho,0}(\R)$  for $m\leq 0$ then $e^{ih(x)\,\sigma(\xi)}\psi(x)$ is in the class $L^pS^m_\varrho$ for any $p<\infty$.
\end{example}

We also have the following simple lemma concerning the products of rough amplitudes which follows directly from Leibniz's rule and H\"older's inequality.
\begin{lem}\label{lem:calculus_lemma} If $a\in L^pS^{m_1}_\varrho$ and $b\in L^q S^{m_2}_\varrho$ then $a\cdot b\in L^r S^{m_1+m_2}_{\varrho}$ where $\frac1 r =\frac1 p+\frac1 q$, $1\leq p,q\leq \infty$.
Moreover, if $\eta(\xi)\in C^{\infty}_{0}$ and $a_\varepsilon(x,\xi)=a(x,\xi)\eta(\varepsilon \xi)$ and $\varepsilon \in[0,1),$ then one has
\[
    \sup_{0<\varepsilon\leq 1}\sup_{\xi\in \R^n} \langle \xi\rangle^{-m+\varrho |
\alpha|} \norm{\d^\alpha_\xi a_\varepsilon (\cdot,\xi)}_{L^p}\leq c_{\eta,\abs{\alpha},\varrho}\abs{a}_{p,m,\abs{\alpha}}.
\]
\end{lem}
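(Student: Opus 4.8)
The plan is to derive both assertions directly from Leibniz's rule and H\"older's inequality, as the statement suggests. For the product estimate, I would fix a multi-index $\alpha$ and a frequency $\xi \in \R^n$ and expand
\[
\partial_\xi^\alpha\brkt{a(\cdot,\xi)\,b(\cdot,\xi)} = \sum_{\beta\leq\alpha}\binom{\alpha}{\beta}\,\partial_\xi^\beta a(\cdot,\xi)\,\partial_\xi^{\alpha-\beta}b(\cdot,\xi).
\]
Taking the $L^r(\R^n)$ norm in $x$ of each summand and applying H\"older's inequality with exponents $p$ and $q$ — legitimate precisely because $\tfrac1r=\tfrac1p+\tfrac1q$ — I would bound $\norm{\partial_\xi^\beta a(\cdot,\xi)}_{L^p}$ by $C_\beta\langle\xi\rangle^{m_1-\varrho\abs{\beta}}$ and $\norm{\partial_\xi^{\alpha-\beta}b(\cdot,\xi)}_{L^q}$ by $C_{\alpha-\beta}\langle\xi\rangle^{m_2-\varrho\abs{\alpha-\beta}}$ using the definitions of the classes. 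Since $\abs{\beta}+\abs{\alpha-\beta}=\abs{\alpha}$, the finite sum collapses to a bound of the form $C_\alpha\langle\xi\rangle^{m_1+m_2-\varrho\abs{\alpha}}$, which is exactly the condition for $a\cdot b\in L^rS^{m_1+m_2}_\varrho$.

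For the second assertion I would again apply Leibniz's rule in $\xi$, observing that $\eta(\varepsilon\xi)$ is independent of $x$, so that
\[
\partial_\xi^\alpha a_\varepsilon(x,\xi)=\sum_{\beta\leq\alpha}\binom{\alpha}{\beta}\,\varepsilon^{\abs{\alpha-\beta}}\,(\partial^{\alpha-\beta}\eta)(\varepsilon\xi)\,\partial_\xi^\beta a(x,\xi).
\]
Taking $L^p$ norms in $x$ and using $\norm{\partial_\xi^\beta a(\cdot,\xi)}_{L^p}\leq\abs{a}_{p,m,\abs{\alpha}}\,\langle\xi\rangle^{m-\varrho\abs{\beta}}$ for every $\beta\leq\alpha$ (the seminorm $\abs{\cdot}_{p,m,s}$ being nondecreasing in $s$), the claim reduces to the uniform scalar estimate
\[
\sup_{0<\varepsilon\leq1}\ \sup_{\xi\in\R^n}\ \sum_{\beta\leq\alpha}\binom{\alpha}{\beta}\,\varepsilon^{\abs{\alpha-\beta}}\,\abs{(\partial^{\alpha-\beta}\eta)(\varepsilon\xi)}\,\langle\xi\rangle^{\varrho\abs{\alpha-\beta}}\leq c_{\eta,\abs{\alpha},\varrho}.
\]
Here the $\beta=\alpha$ term is harmless, being $\abs{\eta(\varepsilon\xi)}\leq\norm{\eta}_{L^\infty}$; for $\beta\neq\alpha$ I would use that $\partial^{\alpha-\beta}\eta$ is supported in a fixed ball $\set{\abs{\zeta}\leq R}$, so the corresponding summand is nonzero only when $\varepsilon\abs{\xi}\leq R$, and on that set $\varepsilon^{\abs{\alpha-\beta}}\langle\xi\rangle^{\varrho\abs{\alpha-\beta}}\leq(\varepsilon\langle\xi\rangle)^{\abs{\alpha-\beta}}\leq(1+R)^{\abs{\alpha-\beta}}$ because $\varrho\leq1$ and $\varepsilon\leq1$, while $\abs{(\partial^{\alpha-\beta}\eta)(\varepsilon\xi)}\leq\norm{\partial^{\alpha-\beta}\eta}_{L^\infty}$. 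Summing the finitely many terms then yields a constant of the asserted form.

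I do not anticipate any real obstacle here: the whole argument is bookkeeping with Leibniz's rule, H\"older's inequality, and the monotonicity of the seminorms in their order. The only point that needs genuine attention is the $\varepsilon$-uniformity in the second part — one must verify that the factors $\varepsilon^{\abs{\alpha-\beta}}$ are absorbed rather than leaking into the constant — and this is exactly where the compact support of $\eta$ together with all its derivatives, combined with $\varrho\leq1$, is used.
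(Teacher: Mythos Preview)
Your proposal is correct and follows exactly the approach the paper indicates: the lemma is stated there as following ``directly from Leibniz's rule and H\"older's inequality,'' with no further proof given. You have simply filled in the routine details, including the only nontrivial point --- the $\varepsilon$-uniformity via the compact support of $\eta$ and $\varrho\leq 1$ --- which the paper leaves implicit.
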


We also need to describe the class of phase functions that we will use in our investigation. To this end, the class $\Phi^{k}$  defined below, will play a significant role in our investigations.
\begin{definition} \label{definition Phik phases}
A real valued function $\varphi(x,\xi)$ belongs to the class $\Phi^{k}$, if $\varphi (x,\xi)\in C^{\infty}(\R^n \times\R^n \setminus\{0\})$, is positively homogeneous of degree $1$ in the frequency variable $\xi$, and satisfies the following condition:

For any pair of multi-indices $\alpha$ and $\beta$, satisfying $|\alpha|+|\beta|\geq k$, there exists a positive constant $C_{\alpha, \beta}$ such that
   \begin{equation}\label{C_alpha}
      \sup_{(x,\,\xi) \in \R^n \times\R^n \setminus\{0\}}  |\xi| ^{-1+\vert \alpha\vert}\vert \partial_{\xi}^{\alpha}\partial_{x}^{\beta}\varphi(x,\xi)\vert
      \leq C_{\alpha , \beta}.
   \end{equation}
\end{definition}

In connection to the problem of local boundedness of Fourier integral operators, one considers phase functions $\varphi(x,\xi)$ that are positively homogeneous of degree $1$ in the frequency variable $\xi$ for which $\det [\partial^{2}_{x_{j}\xi_{k}} \varphi(x,\xi)]\neq 0.$ The latter is referred to as the {\it{non-degeneracy condition}}. However, for the purpose of proving global regularity results, we require a stronger condition than the aforementioned weak non-degeneracy condition.

\begin{definition}\label{strong non-degeneracy}
 A real valued phase $\varphi\in C^{2}(\R^n \times\R^n \setminus\{0\})$ satisfies the {\it{strong non-degeneracy condition}} or the {\it{SND condition}} for short, if there exists a constant $c>0$ such that
\begin{equation}\label{lower_bound on mixed hessian}
\Big|\det \frac{\partial^{2}\varphi(x,\xi)}{\partial x_j \partial \xi_k}\Big| \geq c, \quad \text{for all}\quad (x,\,\xi) \in \R^n \times\R^n \setminus\{0\}
\end{equation}

\end{definition}

\begin{example} A phase function intimately connected to the study of the wave operator, namely $\varphi(x,\xi)= |\xi|+ \langle x, \xi\rangle$, satisfies the SND condition and belongs to the class $\Phi^2$.
\end{example}

As is common practice, we will denote constants which can be determined by known parameters in a given situation, but whose
value is not crucial to the problem at hand, by $C$. Such parameters in this paper would be, for example, $m$, $\varrho$, $p$, $n$ and the constants appearing in the definitions of various amplitude classes. The value of $C$ may differ from line to line, but in each instance could be estimated if necessary. We also write sometimes $a\lesssim b$ as shorthand for $a\leq Cb$.

\section{Tools in proving boundedness of rough linear FIOs}

Here we collect the main tools in proving our boundedness results for linear FIOs.

The following decomposition due to Seeger, Sogge and Stein \cite{SSS} is by now classical. One starts by taking a Littlewood-Paley partition of unity
\begin{equation}\label{eq:LittlewoodPaley}
    \Psi_0(\xi) +\sum_{j=1}^{\infty}\Psi_{j}(\xi)=1,
\end{equation}
where supp $\Psi_0\subset \{\xi;\,  \vert \xi \vert \leq 2 \}$, supp
$\Psi\subset \{\xi;\, \frac{1}{2} \leq \vert \xi \vert \leq 2 \}$
and $\Psi_{j}(\xi) =\Psi(2^{-j}\xi)$.

To get useful estimates for the amplitude and the phase function, one
imposes a second decomposition on the former Littlewood-Paley
partition of unity in such a way that each dyadic shell $2^{j-1}\leq
\vert \xi\vert\leq 2^{j+1}$ is partitioned into truncated cones of
thickness roughly $2^{\frac{j}{2}}$. Roughly $2^{\frac{(n-1)j}{2}}$
such elements are needed to cover the shell $2^{j-1}\leq \vert
\xi\vert\leq 2^{j+1}$. For each $j$ we fix a collection of unit
vectors $\{\xi^{\nu}_{j}\}_{\nu}$ that satisfy,
\begin{enumerate}
\item $\vert \xi^{\nu}_{j}-\xi^{\nu'}_{j}\vert\geq
2^{\frac{-j}{2}},$ if $\nu\neq \nu '$.
\item If $\xi\in\mathbb{S}^{n-1}$, then there exists a $
\xi^{\nu}_{j}$ so that $\vert \xi -\xi^{\nu}_{j}\vert
<2^{\frac{-j}{2}}$.
\end{enumerate}
Let $\Gamma^{\nu}_{j}$ denote the cone in the $\xi$ space whose
 central direction is $\xi^{\nu}_{j}$, i.e.
\begin{equation*}
    \Gamma^{\nu}_{j}=\set{ \xi;\,  \abs{ \frac{\xi}{\vert\xi\vert}-\xi^{\nu}_{j}}\leq 2\cdot 2^{\frac{-j}{2}}}.
\end{equation*}
One can construct an associated partition of unity given by
functions $\chi^{\nu}_{j}$, each homogeneous of degree $0$ in $\xi$
and supported in $\Gamma^{\nu}_{j}$ with,
\begin{equation*}
\sum_{\nu}\chi^{\nu}_{j}(\xi)=1,\,\,\, \text{ for all}\,\,\, \xi \neq
0\,\,\, \text{and all}\,\,\, j,
\end{equation*}
and
\begin{equation}
\label{chiestimate 1} \vert \partial
^{\alpha}_{\xi}\chi^{\nu}_{j}(\xi)\vert\leq C_{\alpha}
2^{\frac{\vert \alpha\vert j}{2}}\vert \xi\vert ^{-\vert \alpha
\vert},
\end{equation}
with the improvement
\begin{equation}
\label{chiestimate 2} \vert \partial
^{N}_{\xi_{1}}\chi^{\nu}_{j}(\xi)\vert\leq C_{N}
\vert \xi\vert ^{-N}, \quad  \text{for} \quad N\geq 1,
 \end{equation}
if one chooses the axis in $\xi$ space such that $\xi_1$ is in the direction of $\xi^{\nu}_{j}$ and $\xi'=(\xi _2 , \dots , \xi_{n})$ is perpendicular to $\xi^{\nu}_{j}$. Using $\Psi_{j}$'s and $\chi^{\nu}_{j}$'s, we can construct a
Littlewood-Paley partition of unity
\begin{equation*}
\Psi_{0}(\xi)+ \sum_{j=1}^{\infty}\sum_{\nu}\chi^{\nu}_{j}(\xi)\,
\Psi_{j}(\xi)=1.
\end{equation*}
Now to any of the classes of amplitudes and phases defined in Section \ref{linear amplitudes} one associates a Fourier integral operator given by
\begin{equation}\label{definition basic linear FIO}
  T_{a}f(x)= \frac{1}{(2\pi)^{n}}\int_{\R^{n}}e^{i
\varphi(x,\xi)}a(x,\xi)\hat{f}(\xi)
\, \dd\xi.
\end{equation}
Using the Littlewood-Paley decomposition above, we decompose this operator as
\begin{multline}
\label{Tdecomp}
T_0 f(x) + \sum_{j=1}^{\infty}\sum_{\nu}T^{\nu}_{j} f(x)=
\frac{1}{(2\pi)^{n}}\int_{\R^{n}}e^{i
\varphi(x,\xi)}a(x,\xi) \Psi_0(\xi)\hat{f}(\xi)
\, \dd\xi\\ +\frac{1}{(2\pi)^{n}}
\sum_{j=1}^{\infty}\sum_{\nu}\int_{\R^{n}}e^{i
\varphi(x,\xi)+i\langle x,\xi\rangle}a(x,\xi)\chi^{\nu}_{j}(\xi)
\Psi_{j}(\xi)\hat{f}(\xi)\, \dd\xi.
\end{multline}
We refer to $T_0$ as the {\it{low frequency part}}, and $T^{\nu}_{j}$ as the {\it{high frequency part}} of the FIO $T_{a}.$\\

Now, one introduces the phase function $\Phi(x,\xi)=
\varphi(x,\xi)-\langle (\nabla_{\xi}\varphi)(x,\xi^{\nu}_{j}),
\xi\rangle$ and the amplitude
\begin{equation}\label{amplitude}
A^{\nu}_{j}(x,\xi)= e^{i \Phi(x,\xi)}a(x,\xi)\chi^{\nu}_{j}(\xi)\,
\Psi_{j}(\xi).
\end{equation}

\noindent It can be verified (see e.g. \cite[p. 407]{Stein}) that the phase
$\Phi(x,\xi)$ satisfies 
\begin{eqnarray}
    \label{phaseestim1} \vert (\frac{\partial}{\partial \xi_1})^{N}
\Phi(x,\xi)\vert\leq C_{N} 2^{-Nj},\quad \text{and}\\
\label{phaseestim2} \vert (\nabla_{\xi'})^{N} \Phi(x,\xi)\vert\leq
C_{N} 2^{\frac{-Nj}{2}},
\end{eqnarray}
for $N\geq 2$ on the support of $A^{\nu}_{j}(x,\xi)$.

Using these, we can rewrite $T^{\nu}_{j}$ as a FIO with
a linear phase function,
\begin{equation}
\label{definition rewrittentnuj} T^{\nu}_{j} f(x)
=\frac{1}{(2\pi)^{n}}\int_{\R^{n}}A^{\nu}_{j}(x,\xi) e^{i
\langle (\nabla_{\xi}\varphi)(x,\xi^{\nu}_{j}),\, \xi\rangle} \hat{f}(\xi)\, \dd\xi.
\end{equation}

In this paper we will only deal with classes $\Phi^1,$ and more importantly $\Phi^2 ,$ of phase functions. In the case of class $\Phi^{2},$ we have only required control of those frequency derivatives of the phase function which are greater or equal to $2$. This restriction is motivated by the simple model case phase function $\varphi(x,\xi)=|\xi|+ \langle x,\xi\rangle$ for which the first order $\xi$-derivatives of the phase are not bounded but all the derivatives of order equal or higher than 2, decay away from the origin and so $\varphi(x,\xi)\in\Phi^2$. However in order to handle the boundedness of the low frequency parts of FIOs, one also needs to control the first order $\xi$ derivatives of the phase.

The following phase reduction will reduce the phase to a linear term plus a phase for which the first order frequency derivatives are bounded. The proof can be found in \cite[Lemma 1.2.3]{DosStaub} for amplitudes in $L^\infty S^m_\varrho$, but the same argument also holds for amplitudes in any $L^p S^m_\varrho$.

\begin{lem} 
\label{lem:change}
Any FIO $T_{a}$ of the type \eqref{definition basic linear FIO} with amplitude $a(x,\xi)\in L^{p}S^{m}_{\varrho}$ and phase function $\varphi(x,\xi)\in \Phi^2$, can be written as a finite sum of Fourier integral operators of the form
\begin{equation}\label{reduced rep of Fourier integral operator}
  \frac{1}{(2\pi)^{n}} \int a(x,\xi)\, e^{i\psi(x,\xi)+i\langle \nabla_{\xi}\varphi(x,\zeta),\xi\rangle}\, \widehat{f}(\xi) \, \dd\xi
\end{equation}
where $\zeta$ is a point on the unit sphere $\mathbb{S}^{n-1}$, $\psi(x,\xi)\in \Phi^1$ and $a(x,\xi) \in L^{p} S^{m}_{\varrho}$ is localized in the $\xi$ variable around the point $\zeta$.

\end{lem}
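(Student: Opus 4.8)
The plan is to follow the classical reduction of \cite[Lemma 1.2.3]{DosStaub}, the only genuinely new point being bookkeeping: passing from $L^\infty$ to $L^p$ control of the amplitude never interacts with the phase, and every product estimate needed is already recorded in Lemma \ref{lem:calculus_lemma}. First I would separate low and high frequencies via the Littlewood--Paley cutoff $\Psi_0$ from \eqref{eq:LittlewoodPaley}, writing $a=a\Psi_0+a(1-\Psi_0)$; since $1-\Psi_0$ is supported in $\{|\xi|\ge 1\}$, both factors lie in $L^pS^m_\varrho$ by Lemma \ref{lem:calculus_lemma}. Then I would fix finitely many points $\zeta_1,\dots,\zeta_M\in\mathbb S^{n-1}$ and a partition of unity $\{h_\ell\}_{\ell=1}^M$ of the sphere, each $h_\ell$ homogeneous of degree $0$ and supported in a small cap around $\zeta_\ell$, and split $a(1-\Psi_0)=\sum_{\ell=1}^M a(1-\Psi_0)h_\ell$. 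Multiplying by $1-\Psi_0$ removes the only point, $\xi=0$, where $h_\ell$ fails to be smooth, so $(1-\Psi_0)h_\ell\in S^0_{1,0}\subset L^\infty S^0_\varrho$, and Lemma \ref{lem:calculus_lemma} gives $a(1-\Psi_0)h_\ell\in L^pS^m_\varrho$, now with $\xi$-support in a cone around $\zeta_\ell$. By linearity of $T_a$ in the amplitude this writes $T_a$ as a sum of $1+M$ operators of the same form.

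For each of these pieces --- taking $\zeta_0$ to be any fixed unit vector for the low-frequency term $T_{a\Psi_0}$ and $\zeta_\ell$ for the $\ell$-th piece --- I would write $\varphi(x,\xi)=\psi_\ell(x,\xi)+\langle(\nabla_\xi\varphi)(x,\zeta_\ell),\xi\rangle$ with $\psi_\ell(x,\xi):=\varphi(x,\xi)-\langle(\nabla_\xi\varphi)(x,\zeta_\ell),\xi\rangle$, which is precisely the form \eqref{reduced rep of Fourier integral operator}; it then only remains to check $\psi_\ell\in\Phi^1$. Homogeneity of degree $1$ in $\xi$ and smoothness away from $\xi=0$ are immediate since the subtracted term is linear in $\xi$, so I would verify the estimate \eqref{C_alpha} with $k=1$ by splitting into three cases. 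If $|\alpha|\ge 2$ then $\partial_\xi^\alpha\psi_\ell=\partial_\xi^\alpha\varphi$ and the required bound is literally the $\Phi^2$ hypothesis. If $|\alpha|=0$ and $|\beta|\ge1$, Euler's relation $\varphi=\langle\nabla_\xi\varphi,\xi\rangle$ gives $\partial_x^\beta\varphi(x,\xi)=\sum_k\xi_k\,\partial_x^\beta\partial_{\xi_k}\varphi(x,\xi)$, each factor being of total order $\ge2$ hence $O(1)$ by $\Phi^2$, and likewise for the linear term, so $|\partial_x^\beta\psi_\ell(x,\xi)|\lesssim|\xi|=|\xi|^{1-|\alpha|}$. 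If $|\alpha|=1$ then $\partial_x^\beta\partial_{\xi_j}\psi_\ell(x,\xi)=\partial_x^\beta\partial_{\xi_j}\varphi(x,\xi)-\partial_x^\beta\partial_{\xi_j}\varphi(x,\zeta_\ell)$; for $|\beta|\ge1$ this is of total order $\ge2$ and each summand is $O(1)$ by $\Phi^2$, while for $\beta=0$ one uses that $\partial_{\xi_j}\varphi$ is homogeneous of degree $0$, so it is determined by its values on $\mathbb S^{n-1}$, where the bound $|\partial_\xi^2\varphi(x,\xi)|\lesssim|\xi|^{-1}$ from $\Phi^2$ makes $\partial_{\xi_j}\varphi(x,\cdot)$ Lipschitz with an $x$-independent constant; in either subcase the difference is bounded, i.e.\ $O(|\xi|^{1-|\alpha|})$. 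Finally, since every amplitude produced above is $a$ times a symbol in $S^0_{1,0}$, Lemma \ref{lem:calculus_lemma} keeps it in $L^pS^m_\varrho$ with seminorms controlled by those of $a$.

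The one step that needs care is the last case: the first-order frequency derivatives of $\varphi$ are themselves unbounded --- as $\varphi(x,\xi)=|\xi|+\langle x,\xi\rangle$ shows --- so one cannot estimate $\partial_{\xi_j}\psi_\ell$ termwise. The point is exactly that the unbounded, purely $x$-dependent contribution cancels in $\partial_{\xi_j}\varphi(x,\xi)-\partial_{\xi_j}\varphi(x,\zeta_\ell)$, leaving a quantity governed by the size of $\partial_\xi^2\varphi$ on the unit sphere, which is precisely what membership in $\Phi^2$ controls; this is why subtracting the linearization of $\varphi$ at a point of $\mathbb S^{n-1}$ is the correct reduction. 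I would also point out that a single reference point $\zeta_0$ already suffices to produce a $\Phi^1$ phase; the finite sum over the $\zeta_\ell$ is made only to obtain the extra $\xi$-localization of the amplitude asserted in the statement (and is harmless for the reasons above). Replacing the $L^\infty$ estimates of \cite{DosStaub} by these $L^p$ ones throughout then completes the proof.
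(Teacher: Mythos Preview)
Your proposal is correct and follows exactly the route the paper indicates: the paper gives no argument of its own beyond citing \cite[Lemma~1.2.3]{DosStaub} and remarking that the same proof works for $L^pS^m_\varrho$ amplitudes, and you have faithfully reconstructed that argument---spherical partition of unity, subtraction of the linearization $\langle\nabla_\xi\varphi(x,\zeta_\ell),\xi\rangle$, and the case split on $|\alpha|$ to verify $\psi_\ell\in\Phi^1$---while making explicit that the only amplitude manipulations needed are products with $S^0_{1,0}$ symbols, handled by Lemma~\ref{lem:calculus_lemma}. In fact you supply more detail than the paper does.
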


We will also need a uniform non-stationary phase estimate that yields a uniform bound for certain oscillatory integrals that arise as kernels of certain  operators. To this end, we have:
\begin{lem}\label{lem:technic} Let $\mathcal{K}\subset \R^n$ be a compact set, $U\supset \mathcal{K}$ an open set and $k$ a nonnegative integer. Let $\phi$ be a real valued function in $C^{\infty}(U)$ such that $\abs{\nabla \phi}>0$ and 
\[
    \abs{\d^\alpha \phi}\lesssim \abs{\nabla \phi},\qquad  \abs{\d^\alpha \brkt{\abs{\nabla \phi}^2}}\lesssim \abs{\nabla \phi}^2, \quad \text{for all multi-indices $\alpha$ with $\abs{\alpha}\geq 1$}
\]
Then, for any $F\in C^\infty_0(\mathcal{K})$, any integer $k\geq 0$ and any $\lambda>0$,
\begin{equation*}
	\lambda ^k \abs{\int_{\R ^n} F(\xi)\, e^{i\lambda \phi(\xi)}\, \dd \xi}\leq C_{k,n,\mathcal{K}} \sum_{\abs{\alpha}\leq k} \int_\mathcal{K} \abs{\d^{\alpha} F(\xi)} \abs{\nabla \phi(\xi)}^{-k}\, \dd \xi.
\end{equation*}
\begin{proof}
    Let $\Psi=\abs{\nabla \phi}^2$.  We observe first that for any multi-index $\alpha$ with $\abs{\alpha}\geq 0$, $\abs{\partial^\alpha\brkt{1/\Psi}}\lesssim \brkt{1/\Psi}$. The assertion is trivial for $\abs{\alpha}=0$. Let $\abs{\alpha}\geq 1$ and suppose that $\abs{\d^\gamma (1/\Psi)}\lesssim 1/\Psi$ for any multi-index $\gamma$ with $\abs{\gamma}<\abs{\alpha}$. Leibniz rule yields
    \[
        \partial^\alpha\brkt{1/\Psi} \Psi=-\sum_{\beta<\alpha}\binom{\alpha}{\beta}  \partial^\beta\brkt{1/\Psi}  \partial^{\alpha-\beta}\brkt{\Psi},
    \]
   from which, by the induction hypothesis and the assumption on $\Psi$, the claim follows.

    Let us define $A_0=F$ and
    \[
        A_{k}^{j_1,\ldots,j_k}=\d_{j_l}\brkt{A^{j_1,\ldots,j_{k-1}}_{k-1} {\d_{j_l} \phi}/{\Psi}},
    \]
    for $k\geq 1$, $j_l\in \set{1,\ldots,n}$ for $l\in\{0,\ldots,k\}$.
    Observe that, for any multi-index $\alpha$, $\abs{\alpha}\geq 0$,
    \[
        \begin{split}
        \d^\alpha \brkt{A_{k}^{j_1,\ldots,j_k}}
        &=\sum \binom{\alpha}{\beta}\binom{\beta}{\gamma} \Bigg( \d^\beta \d_{j_k}A_{k-1}^{j_1,\ldots,j_{k-1}}\, \d^\gamma \d_{j_k}\phi \, \d^{\alpha-\beta-\gamma}\brkt{1/\Psi}\\
        &+\d^\beta A_{k-1}^{j_1,\ldots,j_{k-1}}\, \d^\gamma \d_{j_k,j_k}^2\phi \,\d^{\alpha-\beta-\gamma}\brkt{1/\Psi}\\
        &+\d^\beta \d_{j_k} A_{k-1}^{j_1,\ldots,j_{k-1}}\, \d^\gamma \d_{j_k}\phi \, \d^{\alpha-\beta-\gamma}\d_{j_k}\brkt{1/\Psi}\Bigg).
        \end{split}
    \]
    Proceeding by induction, one can see that for $k\geq 1$ and for any multi-index $\alpha$ with $\abs{\alpha}\geq 0$,
    $A_k^{j_1,\ldots,j_k}\in C^{\infty}_{0}(\mathcal{K})$ and
    \begin{equation}\label{eq:techn}
        \abs{\partial^\alpha A_k^{j_1,\ldots,j_k}}\lesssim \sum_{\abs{\beta}\leq \abs{\alpha}+k} \abs{\d^\beta F} \Psi^{-k/2}.
    \end{equation}
    Since $1=\sum_{j=1}^n \frac{\partial_{j}\phi}{\Psi} \partial_j \phi$, and $i \lambda \partial_j\phi e^{i\lambda \phi}= \d_{j}\brkt{e^{i\lambda \phi}}$, integration by parts yields
    \[
        (-i\lambda)^k\int_{\R ^n} F(\xi) e^{i\lambda \phi(\xi)}\, \dd \xi=\sum_{j_1,\ldots,j_k=1}^n \int_{\mathcal{K}} A^{j_1,\ldots,j_k}_k e^{i\lambda \phi(\xi)}\, \dd \xi.
    \]
    Then the result follows by taking absolute values of both sides and using \eqref{eq:techn} for $\abs{\alpha}=0$.
\end{proof}
\end{lem}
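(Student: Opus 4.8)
The plan is to carry out the standard non-stationary phase estimate through iterated integration by parts. Write $\Psi=\abs{\nabla\phi}^2$, which by hypothesis is smooth and strictly positive on $U$, and start from the pointwise identity $1=\sum_{j=1}^n\frac{\partial_j\phi}{\Psi}\,\partial_j\phi$ together with $i\lambda\,\partial_j\phi\,e^{i\lambda\phi}=\partial_j\brkt{e^{i\lambda\phi}}$. Setting $A_0=F$ and, for $k\geq1$,
\[
    A_k^{j_1,\dots,j_k}=\partial_{j_k}\brkt{A_{k-1}^{j_1,\dots,j_{k-1}}\,\frac{\partial_{j_k}\phi}{\Psi}},
\]
one obtains after $k$ integrations by parts the identity
\[
    (-i\lambda)^k\int_{\R^n}F(\xi)\,e^{i\lambda\phi(\xi)}\,\dd\xi=\sum_{j_1,\dots,j_k=1}^n\int_{\mathcal{K}}A_k^{j_1,\dots,j_k}(\xi)\,e^{i\lambda\phi(\xi)}\,\dd\xi .
\]
Since $F$ has compact support inside $U$, where all the functions involved are smooth, there are no boundary terms. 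Taking absolute values, it then suffices to bound $\norm{A_k^{j_1,\dots,j_k}}_{L^1(\mathcal{K})}$ by $C\sum_{\abs\beta\leq k}\int_{\mathcal{K}}\abs{\partial^\beta F}\,\abs{\nabla\phi}^{-k}\,\dd\xi$, since the $n^k$ terms in the sum over $(j_1,\dots,j_k)$ can be absorbed into the constant $C_{k,n,\mathcal{K}}$.

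As a preliminary I would record that $\abs{\partial^\alpha(1/\Psi)}\lesssim1/\Psi$ for every multi-index $\alpha$, proved by induction on $\abs\alpha$: the case $\abs\alpha=0$ is trivial, and for $\abs\alpha\geq1$, differentiating $(1/\Psi)\cdot\Psi=1$ and isolating the top-order term gives $\partial^\alpha(1/\Psi)\,\Psi=-\sum_{\beta<\alpha}\binom{\alpha}{\beta}\partial^\beta(1/\Psi)\,\partial^{\alpha-\beta}\Psi$; each summand is $\lesssim(1/\Psi)\cdot\Psi=1$ by the induction hypothesis and the hypothesis $\abs{\partial^{\alpha-\beta}\Psi}\lesssim\Psi$ (valid as $\abs{\alpha-\beta}\geq1$), so $\abs{\partial^\alpha(1/\Psi)}\lesssim1/\Psi$.

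The heart of the argument is the claim that, for all $k\geq0$ and all multi-indices $\alpha$, one has $A_k^{j_1,\dots,j_k}\in C^\infty_0(\mathcal{K})$ and
\[
    \abs{\partial^\alpha A_k^{j_1,\dots,j_k}}\lesssim\sum_{\abs\beta\leq\abs\alpha+k}\abs{\partial^\beta F}\,\Psi^{-k/2} .
\]
The support statement is immediate since passing from $A_{k-1}$ to $A_k$ only multiplies by functions smooth on $U\supset\mathcal{K}$ and differentiates once. For the size bound I would induct on $k$: expanding $\partial^\alpha A_k^{j_1,\dots,j_k}$ by the Leibniz rule produces a finite sum of terms, each a product of a derivative of $A_{k-1}^{j_1,\dots,j_{k-1}}$, a derivative of $\partial_{j_k}\phi$, and a derivative of $1/\Psi$; estimating these three factors respectively by the induction hypothesis, by $\abs{\partial^\gamma\phi}\lesssim\abs{\nabla\phi}=\Psi^{1/2}$ (the case $\abs\gamma=0$ being the trivial bound $\abs{\partial_{j_k}\phi}\leq\Psi^{1/2}$), and by the reciprocal estimate above, the powers of $\Psi$ combine as $\Psi^{-(k-1)/2}\cdot\Psi^{1/2}\cdot\Psi^{-1}=\Psi^{-k/2}$, while the order of the derivatives falling on $F$ grows by at most the number of derivatives redistributed, which is exactly what the range $\abs\beta\leq\abs\alpha+k$ allows. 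Applying this bound with $\alpha=0$, using $\Psi^{-k/2}=\abs{\nabla\phi}^{-k}$, integrating over $\mathcal{K}$ and summing over the indices then gives the assertion. I expect the only real obstacle to be the bookkeeping in this final induction — one has to check that each integration by parts contributes exactly one net factor $\Psi^{-1/2}$ (and not $\Psi^{-1}$, which would be too weak when $\abs{\nabla\phi}$ is large) and that the derivative losses on $F$ stay within the claimed range; the reciprocal estimate, the support claim, and the final assembly are all routine.
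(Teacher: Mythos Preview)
Your proposal is correct and follows essentially the same route as the paper's proof: the same reciprocal estimate $\abs{\partial^\alpha(1/\Psi)}\lesssim 1/\Psi$ by induction, the same iterated functions $A_k^{j_1,\dots,j_k}$, the same inductive bound $\abs{\partial^\alpha A_k}\lesssim\sum_{\abs\beta\leq\abs\alpha+k}\abs{\partial^\beta F}\,\Psi^{-k/2}$, and the same integration-by-parts identity. Your write-up is in fact a touch more explicit about why the powers of $\Psi$ combine to $\Psi^{-k/2}$, which is exactly the bookkeeping point you flagged.
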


\section{Global $L^q -L^r$ boundedness of rough linear Fourier integral and Pseudodifferential operators}\label{linear FIOs}
In this section we shall state and prove a boundedness results for rough $\Psi$DOs
and FIOs (with smooth strongly non-degenerate phase
functions), extending the results in \cite{DosStaub,KenStaub}.
\subsection{Boundedness of FIOs}\label{subsec low freq}
 First we deal with the boundedness of FIOs by doing a separate analysis of the low and high frequency parts of the operator. Using the decomposition in \eqref{Tdecomp}, we shall first establish the boundedness of the low frequency portion of the Fourier integral operator given by
\[T_0 f(x)=\frac{1}{(2\pi)^{n}}\int_{\R^{n}}e^{i
\varphi(x,\xi)}a(x,\xi) \Psi_0(\xi)\hat{f}(\xi)
\, \dd\xi,\]
where $\Psi_0 \in C_{0}^{\infty}$ and is supported near the origin. Clearly, instead of studying $T_0$, we can consider an FIO $T_{a}$ whose amplitude $a(x,\xi)$ is compactly supported in the  frequency variable $\xi.$ In what follows, we shall adopt this and drop the reference to $T_0$.
But before, we proceed with the investigation of the $L^q-L^r$ boundedness, we will need the following lemma, whose proof is a straightforward application of \cite[Lemma $1.2.10$]{DosStaub}

\begin{lem} \label{lem:fuijiwara}
 Let $\eta(\xi)$ be a $C_{0}^\infty$ function and set
\[
	K(x,z)=\int_{\R^n} \eta(\xi) e^{i(\psi(x,\xi)+\pr{z}{\xi})}\, \dd \xi,
\]
where $\psi(x,\xi)\in \Phi^1 .$
Then for any $\alpha\in (0,1)$, there exists a positive constant $c$ such that
\[
	\abs{K(x,z)}\leq c (1+\abs{z})^{-n-\alpha}.
\]
\end{lem}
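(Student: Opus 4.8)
The plan is to bound the oscillatory integral defining $K(x,z)$ by a standard non-stationary phase argument, exploiting the decay in $z$ that comes from integrating by parts in $\xi$. First I would observe that since $\eta$ has compact support, say in a ball $B_R$, the integral is over a fixed compact set $\mathcal{K}$, uniformly in $x$. The total phase is $\Phi(x,\xi)=\psi(x,\xi)+\pr{z}{\xi}$, so $\nabla_\xi \Phi = \nabla_\xi\psi(x,\xi)+z$. Because $\psi\in\Phi^1$, all first-order $\xi$-derivatives of $\psi$ (and hence all higher ones) are bounded on $\mathcal{K}$ uniformly in $x$; let $M$ denote a bound for $\abs{\nabla_\xi\psi}$ on $\mathcal{K}$. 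Then for $\abs{z}\geq 2M$ we have $\abs{\nabla_\xi\Phi}\geq \abs{z}-M\geq \abs{z}/2$, so the phase is genuinely non-stationary with gradient comparable to $\abs{z}$.

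Next I would apply the non-stationary phase estimate, Lemma~\ref{lem:technic}, with $\lambda=\abs{z}$ and $\phi(\xi)=\Phi(x,\xi)/\abs{z}$. One needs to check the hypotheses of that lemma for $\phi$: that $\abs{\nabla\phi}>0$ and that $\abs{\d^\alpha\phi}\lesssim\abs{\nabla\phi}$ and $\abs{\d^\alpha(\abs{\nabla\phi}^2)}\lesssim\abs{\nabla\phi}^2$ for $\abs{\alpha}\geq 1$. Since $\nabla\phi = (\nabla_\xi\psi + z)/\abs{z}$ has magnitude in $[1/2,\,3/2]$ on $\mathcal{K}$ and all its derivatives of order $\geq 1$ coincide with those of $\nabla_\xi\psi/\abs{z}$, which are $O(1)$ (in fact $O(1/\abs{z})$), these conditions hold with constants uniform in $x$ and in $z$ for $\abs{z}\geq 2M$. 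Lemma~\ref{lem:technic} then gives, for every integer $k\geq 0$,
\[
    \abs{z}^k\,\abs{K(x,z)}\leq C_{k,n,\mathcal{K}}\sum_{\abs{\beta}\leq k}\int_\mathcal{K}\abs{\d^\beta_\xi(\eta(\xi)\,e^{i\psi(x,\xi)})}\,\abs{\nabla\phi(\xi)}^{-k}\,\dd\xi \leq C_k',
\]
where the last inequality uses that $\abs{\nabla\phi}\geq 1/2$ on $\mathcal{K}$ and that each $\d^\beta_\xi(\eta e^{i\psi})$ is bounded on $\mathcal{K}$ uniformly in $x$ — this last fact is exactly where the $\Phi^1$ hypothesis on $\psi$ is used, via the Faà di Bruno / Leibniz expansion of derivatives of $e^{i\psi}$ in terms of the bounded quantities $\d^\gamma_\xi\psi$. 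Choosing $k=\lceil n+1\rceil$ yields $\abs{K(x,z)}\lesssim \abs{z}^{-n-1}\lesssim(1+\abs{z})^{-n-\alpha}$ in the region $\abs{z}\geq 2M$.

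Finally, in the complementary region $\abs{z}\leq 2M$ the bound is trivial: $\abs{K(x,z)}\leq \int_\mathcal{K}\abs{\eta(\xi)}\,\dd\xi =: C_0$, which is $\leq C_0(1+2M)^{n+\alpha}(1+\abs{z})^{-n-\alpha}$. Combining the two regions gives the claimed uniform bound $\abs{K(x,z)}\leq c\,(1+\abs{z})^{-n-\alpha}$ for any $\alpha\in(0,1)$ (indeed for $\alpha=1$). I expect the only genuinely delicate point to be verifying the hypotheses of Lemma~\ref{lem:technic} with constants independent of $x$, which reduces to the observation that membership of $\psi$ in $\Phi^1$ provides uniform-in-$x$ bounds on all $\xi$-derivatives of $\psi$ of order $\geq 1$ on the compact support of $\eta$; everything else is bookkeeping. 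Alternatively, one could simply cite \cite[Lemma 1.2.10]{DosStaub} directly as the excerpt suggests, in which case the proof is a one-line reduction: the integrand $\eta(\xi)e^{i\psi(x,\xi)}$ with $\psi\in\Phi^1$ fits the hypotheses of that lemma verbatim.
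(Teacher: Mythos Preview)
Your final alternative --- simply citing \cite[Lemma 1.2.10]{DosStaub} --- is exactly what the paper does, and that route is correct. Your detailed argument, however, contains a genuine gap.

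The error is in the sentence ``all first-order $\xi$-derivatives of $\psi$ (and hence all higher ones) are bounded on $\mathcal{K}$''. Membership in $\Phi^1$ gives only $\abs{\d^\alpha_\xi\psi(x,\xi)} \leq C_\alpha \abs{\xi}^{\,1-\abs{\alpha}}$ (see Definition~\ref{definition Phik phases}), so while the first-order $\xi$-derivatives are indeed bounded, derivatives of order $\abs{\alpha}\geq 2$ blow up like $\abs{\xi}^{\,1-\abs{\alpha}}$ as $\xi\to 0$. Since $\eta$ is allowed to be --- and in the intended application to the low-frequency piece of the FIO actually is --- supported in a neighbourhood of the origin, you can neither verify the hypotheses of Lemma~\ref{lem:technic} for your $\phi$ on all of $\mathcal{K}$, nor claim that $\d^\beta_\xi(\eta\,e^{i\psi})$ is uniformly bounded there. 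Your closing remark that the estimate holds ``indeed for $\alpha=1$'' is a symptom of this oversight: the restriction to $\alpha\in(0,1)$ in the statement is present precisely because the singularity of $\psi$ at $\xi=0$ obstructs one further full power of decay. A correct direct argument must treat a shrinking neighbourhood of $\xi=0$ separately, and it is that trade-off which produces $(1+\abs{z})^{-n-\alpha}$ for each fixed $\alpha<1$ with a constant $c=c(\alpha)$ deteriorating as $\alpha\to 1^-$.

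There is also a minor bookkeeping slip: you set $\phi = \Phi/\abs{z}$, which places $e^{i\psi}$ inside the phase, but then take $F=\eta\,e^{i\psi}$ when invoking Lemma~\ref{lem:technic}; exactly one of these choices should be made. Either way, the same singularity issue at $\xi=0$ arises.
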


\begin{remark}
In what follows the norms of the operators involved will depend on various parameters and a finite number of seminorms of the corresponding amplitudes (as in Definition $\ref{LpSmrho definition}$). Therefore, we refrain from emphasizing this dependence in the statement of the theorems.
\end{remark}

\begin{thm}
\label{Linearlow}
Suppose that $0<r\leq \infty$, $1\leq p,q\leq \infty$ satisfy the relation $\frac 1 r = \frac 1 q + \frac 1 p$. Assume that $\varphi\in \Phi^2$ satisfies the SND condition and let $a\in L^pS^m_\varrho$ with $m\leq 0$, $0\leq \varrho \leq 1$, such that $\supp_{\xi}a (x,\xi)$ is compact. Then the FIO $T_a$ defined as in \eqref{definition basic linear FIO} is bounded from $L^q$ to $L^r$.
\begin{proof}
Consider a closed cube $Q$ of side-length $L$ such that $\supp_{\xi} a(x,\xi) \subset \text{Int}(Q).$ We extend $a(x,\cdot)|_{Q}$ periodically with period $L$ into $\widetilde{a}(x,\xi)\in C^{\infty}(\R^{n}_\xi).$ Let $\eta\in C^{\infty}_{0}$ with $\supp \eta \subset Q$ and $\eta=1$ on $\xi$-support of $a(x,\xi)$, so we have $a(x,\xi)=\widetilde{a}(x,\xi) \eta(\xi)$. Now if we expand $\widetilde{a}(x,\xi) $ in a Fourier series, setting $f_k(x)=f(x-\frac{2\pi k}{L})$ for any $k\in \Z^n ,$ we can write
\begin{equation}\label{eq:Fourier_Serie}
	T_a f(x)=\sum_{k\in \Z^n} a_k(x) T_\eta (f_k)(x),
\end{equation}
where
\[
	a_k(x)=\frac{1}{L^n}\int_{\R^n} a(x,\xi) e^{-i \frac{2\pi}{L}\pr{k}{\xi}}\, \dd \xi,
\]
and $T_\eta(f_k)(x)=(2\pi)^{-n}\int \eta(\xi) e^{i \varphi(x,\xi)}\widehat{f_k}(\xi)\, \dd \xi.$ Let us assume for a moment that $T_\eta$ is a bounded operator on $L^q$. Take  $l=1,\ldots, n$ such that $\abs{k_l}\neq 0$. Integration by parts yields
\[
	a_k(x)= \frac{c_{n,N}}{|k_{l}|^N}\int_{\R^n} \partial^N_{\xi_l} a(x,\xi) e^{-i  \frac{2\pi}{L}\pr{k}{\xi}}\, \dd \xi.
\]
Observe also that, by the hypothesis on the amplitude and Lemma \ref{lem:calculus_lemma}
\begin{equation*}
	\max_{s=0,\ldots, N}\int_{\R^n} \norm{\partial^s_{\xi_{l}} a(\cdot,\xi)}_{L^p}\, \dd\xi\leq c_{n, N,\varrho}\abs{a}_{p,m,N},
\end{equation*}
for $N=[\max(n,n/r)]+1$. Thus
\begin{equation}\label{estimak}
	\norm{a_k}_{L^p}\leq c_{n, N,\varrho}\abs{a}_{p,m,N} (1+\abs{k})^{-N}.
\end{equation}
Let us first assume that $r\geq 1$. Then the Minkowski and H\"older inequalities yield

\begin{equation}\label{eq:bound}
\begin{split}
	\norm{T_a f}_{L^r}\leq  \sum_{k\in \Z^n}  \norm{a_k T_{\eta}(f_k)}_{L^r}
\leq  \sum_{k\in \Z^n} \norm{a_k}_{L^p} \norm{T_{\eta}(f_k)}_{L^{q}}.
\end{split}
\end{equation}
On the other hand, since we have assumed that $T_\eta$ is bounded on $L^q$ and the translations are isometries on $L^q$, we have that $\norm{T_\eta (f_k)}_{L^q}\leq  c_{\eta,\varphi} \norm{f}_{L^q}$. Therefore \eqref{estimak} yield
\[
	\norm{T_a f}_{L^r}\lesssim \abs{a}_{p,m,N} \sum_{k\in \Z^n} (1+|k|)^{-N} \norm{f}_{L^q}\thickapprox \norm{f}_{L^q}.
\]

Assume now that $0<r<1$. Using \eqref{eq:Fourier_Serie} and H\"older's inequality we have
\[
	\int \abs{T_a f(x)}^r \, \dd x
	\leq \sum_{k\in \Z^n}  \int \abs{T_\eta (f_k)(x)}^r\abs{a_k(x)}^r\, \dd x\leq 	\sum_{k\in \Z^n}\norm{a_k}_{L^p}^r \norm{T_{\eta}(f_k)}_{L^{q}}^r.
\]
The boundedness assumption on $T_\eta$ and \eqref{estimak} yields
\[
  \int \abs{T_a f(x)}^r \, \dd x 	\lesssim \abs{a}_{p,m,N}^r \sum_{k\in \Z^n} (1+|k|)^{-Nr} \norm{f}_{L^q}^r\thickapprox \norm{f}_{L^q}^r.
\]

In order to finish the proof we have to show that $T_\eta$ defines a bounded operator on $L^q$, for $1\leq q\leq \infty$. By Lemma \ref{lem:change} we can assume without loss of generality that
\[
	\varphi(x,\xi)=\psi(x,\xi) + \langle \textbf{t}(x), \xi\rangle,
\]
with a smooth map $\textbf{t}: \R^{n}\to \R^n$. 

For $f\in \S$ one has
\begin{equation}\label{eq:kernel_def}
	T_\eta(f)(x)=\frac{1}{(2\pi)^n}\int \eta(\xi) e^{i \pr{\xi}{\mathbf{t}(x)}} e^{i \psi(x,\xi)}\widehat{f}(\xi)\, \dd \xi=\int  K(x,\mathbf{t}(x)-y) f(y)\, \dd y,
\end{equation}
with
\begin{equation}\label{low frequency kernel estim}
	K(x,z)=\frac{1}{(2\pi)^n}\int \eta(\xi) e^{i \pr{\xi}{z}} e^{i \psi(x,\xi)}\, \dd \xi.
\end{equation}
Now, it follows from Lemma \ref{lem:fuijiwara} that for any  $\alpha\in (0,1)$, there exists a constant $c$ such that
\[
	\abs{K(x,z)}\leq c (1+\abs{z})^{-n-\alpha},
\]
and therefore $\sup_{x} \int | K(x,\mathbf{t}(x)-y) |\,\dd y <\infty.$ This yields the boundedness of the operator $T_\eta$ on $L^{\infty}.$  Moreover using the change of variables $z=\mathbf{t}(x)$, the SND condition yields that $\abs{{\rm det}\, D \textbf{t}(x)}\geq c>0$. Therefore if we denote the Jacobian of the change of variables by $J(z)$, J. Schwartz's global inverse function theorem \cite[Theorem 1.22]{Schwartz} implies that $\mathbf{t}$ is a global diffeomorphism on $\R^n$ and  $\abs{\det\, J(z)}\leq 1/c$. Thus
\[
    \begin{split}
	\sup_{y}\int  | K(x,\mathbf{t}(x)-y) |\, \dd x &=\sup_{y}\int |K(\mathbf{t}^{-1}(z),z-y) ||\det\,J(z)|\, \dd z  \\
        &\leq \frac{1}{c} \sup_{y}\int (1+\abs{z-y})^{-n-\alpha}\, \dd z < \infty,
    \end{split}
\]
where we have also used \eqref{low frequency kernel estim}. Therefore Schur's lemma yields that $T_\eta$ is bounded on $L^q$ for all $1\leq q\leq \infty$ and this ends the proof of the theorem.
\end{proof}
\end{thm}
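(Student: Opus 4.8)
The plan is to turn the compactness of $\supp_\xi a(x,\xi)$ into a Fourier series expansion in the frequency variable, which converts the $x$-dependence of the amplitude into a rapidly decaying family of multiplication operators composed with a \emph{single} FIO $T_\eta$ whose amplitude no longer depends on $x$. Enclose $\supp_\xi a(x,\xi)$ in the interior of a cube $Q$ of side-length $L$, periodize $a(x,\cdot)|_Q$ with period $L$ to obtain $\widetilde a(x,\xi)$, and choose $\eta\in C^\infty_0$ supported in $Q$ with $\eta\equiv1$ on the $\xi$-support of $a$, so that $a=\widetilde a\,\eta$. Expanding $\widetilde a$ in a Fourier series in $\xi$ and using that translating $f$ by $\tfrac{2\pi k}{L}$ multiplies $\widehat f$ by $e^{-i\frac{2\pi}{L}\pr{k}{\xi}}$, one gets $T_a f=\sum_{k\in\Z^n}a_k\,T_\eta(f_k)$ with $f_k(x)=f(x-\tfrac{2\pi k}{L})$, $a_k(x)=L^{-n}\int a(x,\xi)e^{-i\frac{2\pi}{L}\pr{k}{\xi}}\,\dd\xi$ and $T_\eta$ the FIO with amplitude $\eta$. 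Repeated integration by parts in a coordinate $\xi_l$ with $k_l\neq0$, together with Lemma~\ref{lem:calculus_lemma} to bound $\int\norm{\partial_{\xi_l}^s a(\cdot,\xi)}_{L^p}\,\dd\xi$ by a seminorm $\abs{a}_{p,m,N}$ (the integral is effectively over $Q$, so this is harmless), yields $\norm{a_k}_{L^p}\lesssim(1+\abs{k})^{-N}$; I would take $N=[\max(n,n/r)]+1$.

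Granting for the moment that $T_\eta$ is bounded on $L^q$ (and recalling that translations act isometrically on $L^q$, so $\norm{T_\eta(f_k)}_{L^q}\lesssim\norm{f}_{L^q}$ uniformly in $k$), the assembly is mechanical. For $r\ge1$, Minkowski's inequality in $L^r$ followed by Hölder with $\tfrac1r=\tfrac1p+\tfrac1q$ gives $\norm{T_a f}_{L^r}\le\sum_k\norm{a_k}_{L^p}\norm{T_\eta(f_k)}_{L^q}\lesssim\big(\sum_k(1+\abs{k})^{-N}\big)\norm{f}_{L^q}\lesssim\norm{f}_{L^q}$, the series converging since $N>n$. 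For $0<r<1$ one uses instead the $r$-subadditivity $\abs{z+w}^r\le\abs{z}^r+\abs{w}^r$ to pass the sum outside the integral, then Hölder pointwise with the conjugate exponents $q/r$ and $p/r$ (legitimate since $r\le p$ and $r\le q$), reducing matters to $\sum_k\norm{a_k}_{L^p}^r\norm{T_\eta(f_k)}_{L^q}^r\lesssim\sum_k(1+\abs{k})^{-Nr}\norm{f}_{L^q}^r$, which converges because $Nr>n$.

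The crux, and the step I expect to be the main obstacle, is the $L^q$-boundedness of $T_\eta$ for \emph{all} $1\le q\le\infty$, in particular the endpoint $q=1$. Here I would first invoke the phase reduction Lemma~\ref{lem:change} to reduce to $\varphi(x,\xi)=\psi(x,\xi)+\pr{\mathbf{t}(x)}{\xi}$ with $\psi\in\Phi^1$ and $\mathbf{t}\colon\R^n\to\R^n$ smooth; then $T_\eta f(x)=\int K(x,\mathbf{t}(x)-y)f(y)\,\dd y$ with $K(x,z)=(2\pi)^{-n}\int\eta(\xi)e^{i\pr{\xi}{z}}e^{i\psi(x,\xi)}\,\dd\xi$. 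Lemma~\ref{lem:fuijiwara} supplies the pointwise decay $\abs{K(x,z)}\lesssim(1+\abs{z})^{-n-\alpha}$ for any $\alpha\in(0,1)$, uniformly in $x$, from which $\sup_x\int\abs{K(x,\mathbf{t}(x)-y)}\,\dd y<\infty$ is immediate (change variables in $y$), giving the $L^\infty$ bound. The $L^1$ bound needs $\sup_y\int\abs{K(x,\mathbf{t}(x)-y)}\,\dd x<\infty$, and this is precisely where the SND hypothesis enters: it forces $\abs{\det D\mathbf{t}(x)}\ge c>0$ uniformly, so by Schwartz's global inverse function theorem $\mathbf{t}$ is a diffeomorphism of $\R^n$ and the change of variables $z=\mathbf{t}(x)$ has Jacobian bounded by $1/c$, whence $\sup_y\int\abs{K(\mathbf{t}^{-1}(z),z-y)}\,\abs{\det J(z)}\,\dd z\le\tfrac1c\sup_y\int(1+\abs{z-y})^{-n-\alpha}\,\dd z<\infty$. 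Schur's test then yields boundedness of $T_\eta$ on every $L^q$, $1\le q\le\infty$, closing the argument. The only genuinely delicate ingredients are the uniform-in-$x$ kernel decay (delivered by Lemma~\ref{lem:fuijiwara}, whose hypotheses hold because $\psi\in\Phi^1$) and the global invertibility of $\mathbf{t}$ (delivered by SND plus the global inverse function theorem); the rest is bookkeeping.
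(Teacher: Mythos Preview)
Your proposal is correct and follows essentially the same approach as the paper's proof: the Fourier series expansion of the periodized amplitude, the choice $N=[\max(n,n/r)]+1$, the separate treatment of $r\ge1$ and $0<r<1$, and the Schur-test argument for $T_\eta$ via the phase reduction lemma, Lemma~\ref{lem:fuijiwara}, and Schwartz's global inverse function theorem are all exactly as in the paper.
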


Now we proceed to the proof of the general case.
\begin{thm}\label{thm:Linear}
 Suppose that $0<r\leq \infty$, $1\leq p,q \leq \infty$, satisfy the relation $\frac 1 r = \frac 1 q + \frac 1 p$. 
Assume that $\varphi\in \Phi^2$ satisfies the SND condition and let $a\in L^pS^m_\varrho$ with $0\leq \varrho \leq 1$ and 
\begin{equation}\label{eq:m_1}
	m < -\frac{(n-1)}{2}\brkt{\frac{1}{s}+\frac{1}{\min(p,s')}} + \frac{n(\varrho-1)}{s},
\end{equation}
where $s= \min(2,p,q)$ and $\frac{1}{s}+\frac{1}{s'}=1$. Then  the FIO $T_a$ defined as in \eqref{definition basic linear FIO} is bounded from $L^q$ to $L^r$.
\end{thm}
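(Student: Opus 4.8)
The plan is to follow the Seeger--Sogge--Stein second dyadic decomposition, feeding in the periodization device from the proof of Theorem~\ref{Linearlow} to absorb the spatial roughness of the amplitude, and to obtain the full range \eqref{eq:m_1} by interpolating between a small number of endpoint estimates.

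By \eqref{Tdecomp}, $T_a = T_0 + \sum_{j\ge 1}\sum_{\nu} T^\nu_j$. The low-frequency operator $T_0$ has an amplitude with compact $\xi$-support and $m<0$, so Theorem~\ref{Linearlow} applies to it directly. For the high-frequency part, recall that after the SSS reduction each $T^\nu_j$ is an FIO with the \emph{linear} phase $\langle \nabla_\xi\varphi(x,\xi^\nu_j),\xi\rangle$ and amplitude $A^\nu_j = e^{i\Phi}a\,\chi^\nu_j\Psi_j$, the phase $\Phi$ satisfying \eqref{phaseestim1}--\eqref{phaseestim2} on $\supp A^\nu_j$. It is enough to prove, for each fixed $j$, the block estimate
\begin{equation*}
	\norm{T_{a\Psi_j}f}_{L^r}\lesssim 2^{j\Theta}\norm{f}_{L^q},\qquad \Theta := m + \tfrac{n-1}{2}\Big(\tfrac1s+\tfrac1{\min(p,s')}\Big)+\tfrac{n(1-\varrho)}{s},
\end{equation*}
since \eqref{eq:m_1} makes $\Theta<0$ and hence $\sum_{j\ge1}2^{j\Theta}<\infty$; the quasi-Banach case $0<r<1$ then follows just as in Theorem~\ref{Linearlow}, by summing $r$-th powers in place of norms.

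For the block estimate I would repeat the periodization of Theorem~\ref{Linearlow}, but applied on the dyadic shell $\{|\xi|\sim 2^j\}$ (and, for the non-$L^2$ endpoints, refined to the individual cones). Since $\supp_\xi (a\Psi_j)$, respectively $\supp_\xi A^\nu_j$, is contained in a box adapted to the shell, expanding the periodic extension of the amplitude in a Fourier series separates off its $x$-dependence and produces a rapidly converging series $\sum_k c_k(x)\,S_k f(x)$, where $S_k$ is an FIO with an $x$-independent bump amplitude and a shifted linear phase, and where integration by parts in $\xi$ against the bounds of Definition~\ref{LpSmrho definition} gives $\norm{c_k}_{L^p}\lesssim 2^{jm}\brkt{1+2^{-j(1-\varrho)}\abs{k}}^{-M}$ for every $M$ (so the number of non-negligible terms is $\sim 2^{jn(1-\varrho)}$). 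Hölder's inequality, exactly as in \eqref{eq:bound}, then moves the $L^p$-norms of the coefficients outside and reduces everything to $L^q\to L^q$ bounds for FIOs with $x$-independent amplitudes supported in $\{|\xi|\sim 2^j\}$ and SND phases. For these ``smooth'' operators one has, by the classical SSS machinery --- a further cone decomposition, the non-stationary phase bound of Lemma~\ref{lem:technic}, the estimates \eqref{phaseestim1}--\eqref{phaseestim2} and the cone cut-off estimates, a $TT^*$ almost-orthogonality argument among the cones at $q=2$ (using the SND-induced spreading of the directions $\nabla_\xi\varphi(x,\xi^\nu_j)$ and the bounded overlap of the $\Gamma^\nu_j$), $L^1$ kernel bounds and Schur's lemma at $q=1,\infty$ (the global change of variables $z=\nabla_\xi\varphi(x,\xi^\nu_j)$ being legitimate, as in Theorem~\ref{Linearlow}, by the SND condition and the global inverse function theorem) --- control of the type $2^{j(n-1)|1/q-1/2|}$. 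Interpolating in $q$, and choosing in which Lebesgue exponents the various $\ell$-sums over $k$ and $\nu$ are performed so as to minimise the loss, assembles the powers of $2^j$ into $2^{j\Theta}$.

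The hard part is exactly this last assembly. Because an $L^pS^m_\varrho$ amplitude admits no pointwise-in-$x$ kernel control, the periodization has to be inserted before the summation over cones, and the $\ell^{s'}$-bookkeeping of the Fourier coefficients $c_k$ must be interlocked with the $L^2$ orthogonality and with the $L^1/L^\infty$ overlap counts of the $T^\nu_j$ so that the several losses combine to precisely the exponent in \eqref{eq:m_1}; this interplay is also where the quantity $\min(p,s')$ originates, and it is the step where most of the work lies.
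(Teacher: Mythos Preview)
Your route diverges from the paper's at the high-frequency stage, and the divergence is where the gap lies.  The paper does \emph{not} periodize the amplitude on the dyadic shells or cones.  Instead it writes each $T^\nu_j$ as an integral operator with kernel $K^\nu_j(x,\cdot)$, splits the $y$-integral into $\{g(y)\le 2^{-j\varrho}\}$ and $\{g(y)>2^{-j\varrho}\}$ with $g(y)=(2^{2j}y_1^2+2^j|y'|^2)^{\alpha/2}$, and on each piece applies H\"older simultaneously in $(\nu,y)$ followed by the Hausdorff--Young inequality $\|K^\nu_j(x,\cdot)\|_{L^{s'}_y}\le\|A^\nu_j(x,\cdot)\|_{L^s_\xi}$ (possible because $1\le s\le 2$).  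The $L^s_\xi$-norm over the $\xi$-support of $A^\nu_j$ contributes $2^{j(n+1)/(2s)}$; the sum over $\nu$ is taken in $\ell^{s'}$ or $\ell^p$ and this is precisely where the term $\min(p,s')$ enters.  Finally the auxiliary parameter $\alpha$ is sent to $\infty$, which kills the near-region contribution and leaves the far-region loss $2M(1-\varrho)$ with $M\downarrow n/(2s)$, producing exactly $n(1-\varrho)/s$.

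Your periodization scheme, as written, cannot reach this exponent.  If you expand on a cube of side $\sim 2^j$ and then use H\"older as in \eqref{eq:bound}, you are summing $\|c_k\|_{L^p}$ in $\ell^1$; since the number of effectively non-negligible Fourier coefficients is $\sim 2^{jn(1-\varrho)}$, the roughness costs a full factor $2^{jn(1-\varrho)}$, not $2^{jn(1-\varrho)/s}$.  Consequently your assembled $\Theta$ will exceed the one in \eqref{eq:m_1} whenever $s>1$ (e.g.\ already at $p=q=2$).  The ``$\ell^{s'}$-bookkeeping \dots interlocked with the $L^2$ orthogonality'' you invoke to repair this would, if made precise, have to be the pointwise-in-$x$ Hausdorff--Young bound $(\sum_k|c_k(x)|^{s'})^{1/s'}\lesssim(\int|A^\nu_j(x,\xi)|^s\,d\xi)^{1/s}$ together with the same mixed $L^p_x L^s_\xi$ manipulations the paper performs directly on the kernel; at that point the Fourier-series step is doing no work and you have simply discretized the paper's argument.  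Two further loose ends: the reduction to an FIO with $x$-independent amplitude is not quite right, since $e^{i\Phi(x,\xi)}$ (or, equivalently, the direction $\nabla_\xi\varphi(x,\xi^\nu_j)$) still carries $x$-dependence after the series expansion; and the smooth-FIO endpoint at $q=1$ that you need for the interpolation is only a weak-type or $H^1$ statement, so that step also requires more than a single line.
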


\begin{proof}
We shall assume that $q<\infty$. The case $q=\infty$ is proved with minor modifications in the argument, so we omit the details. We would like to prove that
\[
\|T_a f\|_{L^r (\R^n)} \leq C\|f\|_{L^q (\R^n)}, \quad \text{for all}\quad f\in \S.
\]
To achieve this, we decompose $T_a$ as in \eqref{Tdecomp} in the form $T_0f+ \sum_{j=1}^{\infty}\sum_{\nu}T^{\nu}_{j} f(x)$. By Theorem \ref{Linearlow}, the first term $T_0$, satisfies the desired boundedness, so we confine ourselves to the analysis of the second term. Here we use the representation \eqref{definition rewrittentnuj} of the operators $T^{\nu}_{j}$ namely
\begin{equation*}
 T^{\nu}_{j} f(x)
=\frac{1}{(2\pi)^{n}}\int_{\R^{n}}A^{\nu}_{j}(x,\xi) e^{i
\langle (\nabla_{\xi}\varphi)(x,\xi^{\nu}_{j}),\, \xi\rangle} \hat{f}(\xi)\, \dd\xi=\int_{\R^{n}} K^{\nu}_{j} (x,(\nabla_{\xi}\varphi)(x,\xi^{\nu}_{j})-y) f(y) \dd y,
\end{equation*}
where

\begin{equation*}
K^{\nu}_{j} (x,z)
=\frac{1}{(2\pi)^{n}}\int_{\R^{n}}A^{\nu}_{j}(x,\xi) e^{i
\langle z,\, \xi\rangle}\, \dd\xi.
\end{equation*}
Let $L$ be the differential operator given by
\begin{equation*}
L=I-2^{2j}\frac{\partial ^{2}}{\partial \xi_1 ^{2}}-2^{j}
\Delta_{\xi'}.
\end{equation*}
Using the definition of $A^{\nu}_{j} (x,\xi)$ in \eqref{amplitude}, the assumption that $a\in L^{p}S^{m}_{\varrho}$ together with \eqref{chiestimate 1}, \eqref{chiestimate 2}, and the uniform estimates (in $x$) for $\Phi(x,\xi)$ in \eqref{phaseestim1} and \eqref{phaseestim2}, we can show that for any $\nu$ and any $\xi\in \sup_\xi {A^\nu_j}$
\begin{equation*}
\Vert L^N A^{\nu}_{j}(\cdot,\xi)\Vert_{L^{p}}\leq C_{N} 2^{j(m+ 2N(1-\varrho))}.
\end{equation*}
Let ${\mathbf t}_j^\nu(x)=(\nabla_{\xi}\varphi)(x,\xi^{\nu}_{j})$ and $\alpha\in (0,\infty)$. As before, the SND condition on the phase function yields that $|\det D {\mathbf t}_j^\nu(x)|\geq c>0.$ Setting
\begin{equation*}
g(y)= (2^{2j} y^2_1 + 2^{j} |y'|^2)^{\frac{\alpha}{2}},
\end{equation*}
we can split
\[
\begin{split}
\textbf{I}_1 + \textbf{I}_2&=\sum_{\nu}\left(\int_{g(y)\leq 2^{-j\varrho}} +\int_{g(y)> 2^{-j\varrho}}\right)\vert K_{j}^{\nu}(x,y)f({\mathbf t}_j^\nu(x)-y) \vert\, \dd y\\
&=\sum_{\nu}\int\vert K_{j}^{\nu}(x,y)f({\mathbf t}_j^\nu(x)-y)\vert\, \dd y.
\end{split}
\]
H\"older's inequality in $\nu$ and $y$ simultaneously and thereafter, since $1\leq s\leq 2$, the Hausdorff-Young inequality in the $y$ variable of the second integral yield
\begin{equation*}
\begin{split}
	\textbf{I}_1 &\leq \sqbrkt{\sum_{\nu}\int_{g(y)\leq 2^{-j\varrho}} \abs{f({\mathbf t}_j^\nu(x)-y)}^s \dd y}^{\frac{1}{s}}\sqbrkt{\sum_{\nu}\int\vert K_{j}^{\nu}(x,y)\vert^{s'} \dd y}^{\frac{1}{s'}}\\
&\lesssim  \sqbrkt{\sum_{\nu}\int_{g(y)\leq 2^{-j\varrho}} \abs{f({\mathbf t}_j^\nu(x)-y)}^s \dd y}^{\frac{1}{s}}\sqbrkt{\sum_{\nu}\brkt{\int\vert A_{j}^{\nu}(x,\xi)\vert^{s}\, \dd\xi}^{\frac{s'}{s}}}^{\frac{1}{s'}}.
\end{split}
\end{equation*}
If we now set $F_{j}^\nu(x,y)=f({\mathbf t}_j^\nu(x)-y),$ raise the expression in the estimate of $\textbf{I}_1$ to the $r$-th power and integrate in $x$, then H\"older's inequality yields that $\norm{\textbf{I}_1}_{L^r}$ is bounded by a constant times
\begin{equation}\label{I}
\sqbrkt{\int \brkt{\sum_{\nu}\int_{g(y)\leq 2^{-j\varrho}} \abs{F^{\nu}_j(x,y)}^s \dd y}^{\frac{q}{s}} \dd x}^{\frac{1}{q}}
\sqbrkt{ \int \brkt{\sum_{\nu}\brkt{\int\vert A_{j}^{\nu}(x,\xi)\vert^{s}\, \dd\xi}^{\frac{s'}{s}}}^{\frac{p}{s'}} \dd x}^{\frac{1}{p}}.
\end{equation}
We shall deal with the two terms in the right hand side of this estimate separately. To this end using the Minkowski integral inequality (simultaneously in $y$ and $\nu$), we can see that the first term is bounded by
\[
	\sqbrkt{\sum_{\nu}\int_{g(y)\leq 2^{-j\varrho}}  \brkt{\int \abs{F^{\nu}_j(x,y)}^q \dd x}^{\frac{s}{q}} \dd y}^{\frac{1}{s}}.
\]
Observe now that, letting ${\mathbf t}_j^\nu(x)=t$ and using $\abs{\det D\,{\mathbf t}_j^\nu(x)}\geq c>0$, we obtain
\begin{equation}\label{eq:u}
	\brkt{\int \abs{F^{\nu}_j(x,y)}^q \dd x}^{\frac{1}{q}}=\brkt{\int \abs{f(t-y)}^q \abs{\det D\,{\mathbf t}_j^\nu(x)}^{-1} \dd t}^{\frac{1}{q}}\leq c^{-\frac{1}{q}} \norm{f}_{L^q}.
\end{equation}
Thus, the first term on the right hand side of \eqref{I} is bounded by a constant multiple of
\begin{equation} \label{first term of I}
\begin{split}
	\sqbrkt{\sum_\nu \int_{g(y)\leq 2^{-j\varrho}} \dd y}^{\frac{1}{s}}  \norm{f}_{L^q}&\lesssim
	2^{j\frac{n-1}{2s}} 2^{-j\frac{n+1}{2s}}  \sqbrkt{\int_{\abs{y}\leq 2^{-j\frac{\varrho}{\alpha}}} \dd y}^{\frac{1}{s}} \norm{f}_{L^q}\\&\lesssim 2^{j\frac{n-1}{2s}} 2^{-j\frac{n+1}{2s}}  2^{-j\frac{n\varrho}{\alpha s}} \norm{f}_{L^q}.
\end{split}
\end{equation}
To analyse the second term we shall consider two separate cases, so assume first that $p\geq s'.$ Minkowski inequality yields that the second term in the right hand side of \eqref{I} is bounded by
\begin{equation*}
\begin{split}
\set{\sum_{\nu} \sqrbrak{\int \brkt{\int |A^{\nu}_{j} (x,\xi)|^{s}\dd \xi}^{\frac{p}{s}}\dd x}^{\frac{s'}{p}}}^{\frac{1}{s'}}&\leq  \set{\sum_{\nu} \sqrbrak{\int \brkt{\int |A^{\nu}_{j} (x,\xi)|^{p}\dd x}^{\frac{s}{p}}\dd \xi}^{\frac{s'}{s}}}^{\frac{1}{s'}}\\ &\lesssim 2^{jm} \brkt{\sum_{\nu} |\supp_{\xi} A^{\nu}_{j}|^{\frac{s'}{s}}}^{\frac{1}{s'}}\lesssim 2^{jm} 2^{j\frac{n+1}{2s}}2^{j\frac{n-1}{2s'}},
\end{split}
\end{equation*}
where we have used the fact that the measure of the $\xi-$support of $A^{\nu}_{j}$ is $O(2^{j\frac{n+1}{2}}).$ If $p<s'$, the second term on the right hand side of \eqref{I} is bounded by
\begin{equation*}
\begin{split}
\set{\sum_{\nu} {\int \brkt{\int |A^{\nu}_{j} (x,\xi)|^{s}\dd \xi}^{\frac{p}{s}}\dd x}}^{\frac{1}{p}}&\leq  \set{\sum_{\nu} \sqrbrak{\int \brkt{\int |A^{\nu}_{j} (x,\xi)|^{p}\dd x}^{\frac{s}{p}}\dd \xi}^{\frac{p}{s}}}^{\frac{1}{p}}\\ &\lesssim 2^{jm} \brkt{\sum_{\nu} |\supp_{\xi} A^{\nu}_{j}|^{\frac{p}{s}}}^{\frac{1}{p}}\lesssim 2^{jm} 2^{j\frac{n+1}{2s}}2^{j\frac{n-1}{2p}}.
\end{split}
\end{equation*}
Therefore, \eqref{first term of I} and the previous estimates yield
\[
	\norm{\textbf{I}_1}_{L^r} \lesssim 2^{j\brkt{m-\varrho \frac{n}{\alpha s}+\frac{n-1}{2}\brkt{\frac{1}{s}+\frac{1}{\min(p,s')}}}}\norm{f}_{L^q},
\]
where the constant hidden on the right hand side of this estimate does not depend on $\alpha$.

Define $h(y)=1+2^{2j} y^2_1 + 2^{j} |y'|^2$ and let $M>\frac{n}{2s}$. By H\"older's inequality,
\begin{multline}\label{eq:I2_bound}
	\norm{\textbf{I}_2}_{L^r}\leq\sqbrkt{\int\brkt{\sum_\nu\int_{g(y)>  2^{-j\varrho}}\abs{F^{\nu}_j(x,y)}^sh(y)^{-sM}\, \dd y}^{\frac{q}{s}}\, \dd x}^{\frac{1}{q}}\\
	\times \sqbrkt{\int \brkt{\sum_\nu\int\vert {K_{j}^{\nu}}(x,y)\,h(y)^{M}\vert^{s'}\, \dd y}^{\frac{p}{s'}}\, \dd x}^{\frac{1}{p}}.
\end{multline}
By Minkowski's integral inequality and \eqref{eq:u}, the first term of the right hand side is bounded by
a constant times \begin{equation}\label{eq:estimate1}
	\begin{split}
		\norm{f}_{L^q} \sqbrkt{\sum_\nu\int_{g(y)>  2^{-j\varrho}}h(y)^{-sM}\, \dd y}^{\frac{1}{s}} &\lesssim \norm{f}_{L^q}2^{j\frac{n-1}{2s}}2^{\frac{-j(n+1)}{2s}}\sqbrkt{\int_{|y|> 2^{-j\frac{\varrho}{\alpha}}}|y|^{-2sM}\, \dd y}^{\frac{1}{s}} \\
		&\lesssim \norm{f}_{L^q }2^{j\frac{n-1}{2s}} 2^{\frac{-j(n+1)}{2s}}  2^{j\frac{\varrho}{\alpha} (2M-\frac{n}{s})}.
	\end{split}
\end{equation}
In order to control the second term in \eqref{eq:I2_bound}, let us first assume that $M\in \Z_+$. In this case, Hausdorff-Young's inequality, Minkowski's integral inequality, and the same argument as in the analysis of $\textbf{I}_1$ yield
\begin{equation}\label{integer M estimates for I2}
	\begin{split}
	\set{\int \sqbrkt{\sum_\nu\int\vert {K_{j}^{\nu}}(x,y)\,h(y)^{M}\vert^{s'}\, \dd y}^{\frac{p}{s'}}\, \dd x}^{\frac{1}{p}} &\leq
	\set{\int \sqbrkt{\sum_\nu\brkt{\int\abs{L^M A_{j}^{\nu}(x,\xi)}^s\, \dd \xi}^{\frac{s'}{s}}}^{\frac{p}{s'}}\, \dd x}^{\frac{1}{p}}\\
	&\lesssim 2^{j(m+ 2M(1-\varrho))} 2^{j\frac{n+1}{2s}} 2^{j\frac{n-1}{2\min(s',p)}}.
	\end{split}
\end{equation}
The same estimate for non-integral values of $M$ is also valid by a standard argument, writing $M$ as $[M]+\{M\}$ where $[M]$ denotes the integer part of $M$ and $\{M\}$ its fractional part and using  H\"older's inequality with conjugate exponents $\frac{1}{\{M\}}$, $\frac{1}{1-\{M\}}$ and \eqref{integer M estimates for I2}.

Hence, for every $2M>\frac{n}{s}$, \eqref{eq:estimate1} and \eqref{integer M estimates for I2} yield
\[
	\norm{\textbf{I}_2}_{L^r} \lesssim 2^{j(m+2M(1-\varrho))}2^{j\frac{\varrho}{\alpha} (2M-\frac{n}{s})} 2^{j\frac{n-1}{2}\brkt{\frac{1}{s}+\frac{1}{\min(p,s')}}}\norm{f}_{L^q},
\]
with a constant independent of $\alpha$.
Now putting the estimates for $\textbf{I}_1$ and $\textbf{I}_2$ together and summing, yield that for any $\alpha>0$,
\[
    \norm{T_j f}_{L^r}\lesssim \brkt{2^{j(m+2M(1-\varrho)+\frac{n-1}{2}\brkt{\frac{1}{s}+\frac{1}{\min(p,s')}})}2^{j\frac{\varrho}{\alpha} (2M-\frac{n}{s})}+2^{j\brkt{m-\varrho \frac{n}{\alpha s}+\frac{n-1}{2}\brkt{\frac{1}{s}+\frac{1}{\min(p,s')}}}}}\norm{f}_{L^q}.
\]
Therefore, letting $\alpha$ tend to $\infty,$ we obtain
\[
    \norm{T_j f}_{L^r}\lesssim 2^{j\sqbrkt{m+2M(1-\varrho)+\frac{n-1}{2}\brkt{\frac{1}{s}+\frac{1}{\min(p,s')}}}}\norm{f}_{L^q}.
\]
Now if we let $R=\min(r,1),$ we obtain
\[
	\norm{\sum_{j=1}^{\infty}T_{j} f}_{L^r}^R\leq \sum_{j=1}^{\infty}\norm{T_{j} f}_{L^r}^R\lesssim
	\sum_{j=1}^\infty 2^{jR \sqbrkt{\frac{n-1}{2}\brkt{\frac{1}{s}+\frac{1}{\min(p,s')}}+m+2M(1-\varrho)}}\norm{f}_{L^q}^R\lesssim \norm{f}_{L^q}^R,
\]
provided  $m< -\frac{n-1}{2}\brkt{\frac{1}{s}+\frac{1}{\min(p,s')}}+2M(\varrho-1)$, for some $2M>\frac{n}{s}$, which exists by \eqref{eq:m_1}.
\end{proof}

In the case of $q=2\leq p,$ Theorem \ref{thm:Linear} can be improved, but before we proceed to that, we will need a lemma.

\begin{lem}\label{lem:H_M} Let $2\leq p\leq \infty$, $0\leq \varrho\leq 1$, $a \in L^p S^m_\varrho$ and $r=\frac{2p}{p+2}.$ For $f\in \S,$ a real number $M>n,$ and all multi-indices $\alpha,\beta$ with $\beta\leq \alpha$, set
\begin{equation}
\label{eq:H_M}
	H_M^{\alpha,\beta} f(x,\xi)= |\partial^{\beta}_{\xi} a(x,\xi)| {\int \brkt{1+2^j\abs{x-y}}^{-M} | \partial^{\alpha-\beta}_{\xi} a(y,\xi)| |f(y)|\, \dd y }.
\end{equation}
Then for every $f\in L^{r'}$
\[
	\norm{H_M^{\alpha,\beta}f(\cdot,\xi)}_{L^{r}}\leq C_{M}
	\abs{a}_{p,m,\abs{\alpha}}^2  2^{-jn} \left< \xi\right>^{2m-\varrho\abs{\alpha}}\norm{f}_{L^{r'}}.
\]
\begin{proof}
Since $\frac{1}{r}=\frac{1}{p}+\frac{1}{2}$, H\"older's and Minkowski's inequalities yield
\[
	\begin{split}
	\norm{H_M^{\alpha,\beta}f(\cdot,\xi)}_{L^{r}}&\leq \Vert{\partial^{\beta}_{\xi} a(\cdot,\xi)}\Vert_{L^p}
	 \norm{{\int  \brkt{1+2^j\abs{y}}^{-M} |\partial^{\alpha-\beta}_{\xi} a(\cdot-y,\xi) f(\cdot-y)|\, \dd y }}_{L^2}\\
	 &\leq \Vert{\partial^{\beta}_{\xi} a(\cdot,\xi)}\Vert_{L^p} \int  \brkt{1+2^j\abs{y}}^{-M}\, \dd y \Vert f{\partial^{\alpha-\beta}_{\xi} a(\cdot,\xi) }\Vert_{L^2}\\
	 &\leq C_{M} 2^{-jn}\Vert{\partial^{\beta}_{\xi} a(\cdot,\xi)}\Vert_{L^p} \Vert f{\partial^{\alpha-\beta}_{\xi} a(\cdot,\xi) }\Vert_{L^2},
	\end{split}
\]
provided $M>n$.
On the other hand, since $\frac{1}{2}=\frac{1}{p}+\frac{1}{r'}$,
H\"older's inequality yields
\[
	\Vert f{\partial^{\alpha-\beta}_{\xi} a(\cdot,\xi) }\Vert_{L^2}\leq \Vert f\Vert_{L^{r'}} \Vert \partial^{\alpha-\beta}_{\xi} a(\cdot,\xi)\Vert_{L^p}.
\]
Therefore, since $a\in L^p S^m_\varrho$ one has
\[
	\norm{H_M^{\alpha,\beta}f(\cdot,\xi)}_{L^{r}}\leq C_{M} \abs{a}_{p,m,\abs{\alpha-\beta}} \abs{a}_{p,m,\abs{\beta}}2^{-jn}\left< \xi\right>^{2m-\varrho\abs{\alpha}} \norm{f}_{L^{r'}},
\]
from which the result follows.
\end{proof}
\end{lem}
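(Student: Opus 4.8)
The plan is to get the estimate by two applications of H\"older's inequality in the spatial variable, separated by a convolution bound, exploiting that the rescaled kernel $z\mapsto (1+2^j\abs{z})^{-M}$ lies in $L^1(\R^n)$ with $\norm{(1+2^j\abs{\cdot})^{-M}}_{L^1}\lesssim 2^{-jn}$ as soon as $M>n$. Throughout, $\xi$ is kept fixed, and the identities $\frac1r=\frac1p+\frac12$ and $\frac12=\frac1p+\frac1{r'}$ (both needing $p\ge2$, which is assumed) will be used.

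First I would peel off the pointwise multiplier $\abs{\d^\beta_\xi a(\cdot,\xi)}$ via H\"older with exponents $p$ and $2$:
\[
	\norm{H_M^{\alpha,\beta}f(\cdot,\xi)}_{L^{r}}\le \norm{\d^\beta_\xi a(\cdot,\xi)}_{L^p}\,\norm{\int (1+2^j\abs{\cdot-y})^{-M}\,\abs{\d^{\alpha-\beta}_\xi a(y,\xi)}\,\abs{f(y)}\,\dd y}_{L^2}.
\]
Next I would bound the remaining $L^2$ norm: substituting $y\mapsto x-y$ and invoking Minkowski's integral inequality together with the translation invariance of $\norm{\cdot}_{L^2}$ gives the estimate $\int (1+2^j\abs{y})^{-M}\,\dd y\cdot\norm{\d^{\alpha-\beta}_\xi a(\cdot,\xi)\,f}_{L^2}=C_M\,2^{-jn}\,\norm{\d^{\alpha-\beta}_\xi a(\cdot,\xi)\,f}_{L^2}$. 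A second H\"older, this time with exponents $p$ and $r'$, then yields $\norm{\d^{\alpha-\beta}_\xi a(\cdot,\xi)\,f}_{L^2}\le \norm{\d^{\alpha-\beta}_\xi a(\cdot,\xi)}_{L^p}\norm{f}_{L^{r'}}$.

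Finally I would feed in $a\in L^pS^m_\varrho$ through the seminorms: $\norm{\d^\beta_\xi a(\cdot,\xi)}_{L^p}\le \abs{a}_{p,m,\abs{\beta}}\langle\xi\rangle^{m-\varrho\abs{\beta}}$ and $\norm{\d^{\alpha-\beta}_\xi a(\cdot,\xi)}_{L^p}\le \abs{a}_{p,m,\abs{\alpha-\beta}}\langle\xi\rangle^{m-\varrho\abs{\alpha-\beta}}$. Multiplying the three factors, $\abs{\beta}+\abs{\alpha-\beta}=\abs{\alpha}$ collapses the powers of $\langle\xi\rangle$ to $\langle\xi\rangle^{2m-\varrho\abs{\alpha}}$, while the monotonicity of $s\mapsto\abs{a}_{p,m,s}$ together with $\abs{\beta},\abs{\alpha-\beta}\le\abs{\alpha}$ bounds the product of seminorms by $\abs{a}_{p,m,\abs{\alpha}}^2$, giving precisely the claimed inequality. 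There is no real obstacle here: the argument is essentially bookkeeping, and the only points that need care are the exponent identities, the $L^1$ bound $\lesssim 2^{-jn}$ for the rescaled kernel, and handling the endpoint $p=\infty$ (so $r=r'=2$) with the obvious conventions.
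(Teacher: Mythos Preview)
Your proposal is correct and follows essentially the same route as the paper: first H\"older with $\frac1r=\frac1p+\frac12$ to strip off $|\partial^\beta_\xi a|$, then Minkowski on the convolution to extract the $L^1$ mass $C_M 2^{-jn}$ of the rescaled kernel, a second H\"older with $\frac12=\frac1p+\frac1{r'}$ on the remaining product, and finally the $L^pS^m_\varrho$ seminorm bounds combined via $|\beta|+|\alpha-\beta|=|\alpha|$. The only cosmetic addition on your side is the explicit remark about the monotonicity of $s\mapsto |a|_{p,m,s}$ and the $p=\infty$ endpoint, both of which are harmless.
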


\begin{thm}\label{thmL2}
Let $2\leq p\leq \infty$ and define $r=\frac{2p}{p+2}$. Assume that  $\varphi\in \Phi^2$ satisfies the SND condition and let $a \in L^p S^m_\varrho$ with $0\leq \varrho\leq 1$ and 
$$m < \frac{n(\varrho-1)}{2}.$$
Then the FIO $T_a$ defined as in \eqref{definition basic linear FIO} is bounded from $L^2$ to $L^r$.
\begin{proof}
We define a Littlewood-Paley partition of unity as in \eqref{eq:LittlewoodPaley}. Let $a_j(x,\xi)=a(x,\xi)\Psi_j(\xi)$ for $j\geq 0$.  By Lemma \ref{lem:calculus_lemma}, $a_j\in L^{p}S^m_\varrho$ and for any $s\in \Z^+$ $\sup_{j\geq 0} \abs{a_j}_{p,m,s}\lesssim \abs{a}_{p,m,s}$.

That $T_{a_0}$ satisfies the required bound follows from Theorem \ref{Linearlow}, so it is enough to consider the boundedness of the operators $T_{a_j}$ for $j\geq 1$. To this end, we begin by studying the boundedness of $S_j=T_{a_j}T^{\ast}_{a_j}$.  A simple calculation yields that $S_j f(x)=\int K_j(x,y) f(y)\, \dd y$ with
 \begin{equation*}
 K_j(x,y)=\frac{1}{(2\pi)^{n}}\int e^{i(\varphi(x,\xi)-\varphi(y,\xi))} a_j(x,\xi) \overline{a_j(y,\xi)}\, \dd\xi.
 \end{equation*}
Now since $\varphi$ is homogeneous of degree $1$ in the $\xi$ variable, $K_j (x,y)$ can be written as
\begin{equation*}
K_{j}(x,y)=\frac{2^{jn}}{(2\pi)^{n}}\int m_{j}(x,y,2^{j}\xi)
e^{i2^j \Phi(x,y,\xi)}\, \dd\xi.
\end{equation*}
with $\Phi(x,y,\xi)= \varphi (x,\xi) -\varphi (y,\xi)$ and $m_j(x,y,\xi)=a_j(x,\xi) \overline{a_j(y,\xi)}$. Observe that the $\xi$-support of $m_{j}(x,y,2^{j}\xi)$ lies in the compact set $\mathcal{K}=\{\frac{1}{2}\leq \abs{\xi}\leq 2\}$.  From the mean value theorem, \eqref{C_alpha} and \eqref{lower_bound on mixed hessian}, it follows that
\begin{equation}
\label{Phi cond 1}
\vert \nabla_{\xi}\Phi (x,y, \xi)\vert \thickapprox \vert x-y\vert, \quad \text{for any $x,y\in \R^n$ and $\xi\in \mathcal{K}$}.
\end{equation}
We claim that for any $M>n$ there exists a constant $C_M$ depending only on $M$ such that
\begin{equation}\label{LPpiece}
	\norm{S_j f}_{L^{r}}\leq C_M \abs{a}_{p,m,[M]+1}^2 2^{2j m} 2^{jM(1-\varrho)} \norm{f}_{L^{r'}}, \quad \text{for any $f\in L^{r'}$},
\end{equation}
where $[M]$ stands for the integral part of $M$.

Assume first that $M>n$ is an integer.  Fix $x\neq y$ and set $\phi(\xi)=\Phi (x,y, \xi)$, $\Psi=\abs{\nabla_\xi \phi}^2$. By the mean value theorem, \eqref{C_alpha} and \eqref{Phi cond 1}, for any multi-index $\alpha$ with $\abs{\alpha}\geq 1$ and any $\xi\in \mathcal{K}$,
\[
    \abs{\d^\alpha_{\xi} \phi(\xi)}\lesssim \vert \nabla_{\xi}\Phi (x,y, \xi)\vert= \Psi^{1/2}.
\]
On the other hand, since $\d^\alpha_{\xi} \Psi=\sum_{j=1}^n \sum  \binom{\alpha}{\beta} \d^\beta_{\xi}\d_{\xi_j} \phi \d^{\alpha-\beta}_{\xi}\d_{\xi_j}  \phi$, it follows that, for any $\abs{\alpha}\geq 0$, $\abs{\d^\alpha_{\xi} \Psi}\lesssim \Psi$ and the constants are uniform on $x$ and $y$. Thus \eqref{Phi cond 1} and Lemma \ref{lem:technic} with $F=m_{j}(x,y,2^{j}\xi),$ yield
\begin{equation*}
\begin{split}
\vert K_{j}(x,y)\vert & \leq 2^{j n} 2^{-j M}\ C_{M,\mathcal{K}} \sum _{\vert \alpha \vert\leq M} 2^{j\abs{\alpha}}\int {\vert \partial^{\alpha}_{\xi} m_{j}(x,y,2^j \xi)\vert \vert \nabla_{\xi}\Phi(x,y,\xi)\vert^{-M}} \, \dd \xi\\
&\lesssim 2^{-j M} \abs{x-y}^{-M} \sum _{\vert \alpha \vert\leq M}  2^{j\abs{\alpha}} \int \abs{\partial^{\alpha}_{\xi} m_{j}(x,y,\xi)}\, \dd \xi.
\end{split}
\end{equation*}
On the other hand
\begin{equation*}
\vert K_{j}(x,y)\vert \leq \int \abs{m_j(x,y,\xi)}\, \dd \xi\lesssim \sum _{\vert \alpha \vert\leq M}  2^{j\abs{\alpha}} \int \abs{\partial^{\alpha}_{\xi} m_{j}(x,y,\xi)}\, \dd \xi.
\end{equation*}
Therefore
\begin{equation}\label{eq:Kk}
	\vert K_{j}(x,y)\vert\lesssim \brkt{1+2^j\abs{x-y}}^{-M}\sum _{\vert \alpha \vert\leq M}  2^{j\abs{\alpha}} \int \abs{\partial^{\alpha}_{\xi} m_{j}(x,y,\xi)}\, \dd \xi .
\end{equation}
Now since
\begin{equation}\label{eq:mk}
	\abs{\partial^{\alpha}_{\xi} m_{j}(x,y,\xi)}
	\leq \sum_\beta \binom{\alpha}{\beta}  \abs{\partial^{\beta}_{\xi} a_j(x,\xi)  \partial^{\alpha-\beta}_{\xi} a_j(y,\xi) },
\end{equation}
we obtain that
\begin{equation}\label{eq:S_K}
 	\abs{S_j f(x)}\leq \sum_{\vert \alpha \vert\leq M} \sum_\beta  \binom{\alpha}{\beta}   2^{j\abs{\alpha}}  \int_{\abs{\xi}\sim 2^j}  H_M^{\alpha,\beta}f(x,\xi)\, \dd \xi,
\end{equation}
where, $H_M^{\alpha,\beta}$ is defined as in \eqref{eq:H_M}. Hence, Minkowski's inequality, Lemma \ref{lem:H_M} and \eqref{eq:S_K} yield
\begin{equation}\label{eq:S_K_integers}
	\begin{split}
	\norm{S_j f}_{L^{r}}&\leq  c_M  \sum_{\vert \alpha \vert\leq M} \sum  \binom{\alpha}{\beta} \abs{a}_{p,m,\abs{\alpha}}^2 2^{j\abs{\alpha}}
	2^{-jn} \int_{\abs{\xi}\sim 2^j} \left<\xi \right>^{2m-\varrho \abs{\alpha}}\, \dd \xi  \norm{{f}}_{L^{r'}}\\
	& \leq c_M \abs{a}_{p,m,M}^2 2^{2jm}\sum_{\vert \alpha \vert\leq M} 2^{\abs{\alpha}} 2^{j\abs{\alpha}(1-\varrho)} \norm{f}_{L^{r'}}\\
	& \leq c_M \abs{a}_{p,m,M}^2 2^{2jm} 2^{jM(1-\varrho)} \norm{f}_{L^{r'}}.
	\end{split}
\end{equation}

Assume first that $M\geq n+1$ is a real number. Writing $M=[M]+\{M\}$ as the sum of its integer and fractional parts, the estimate \eqref{eq:S_K_integers} yields
\[
	\begin{split}
	\norm{S_j f}_{L^{r}} &=\norm{S_j f}_{L^{r}}^{1-\{M\}}\norm{S_j f}_{L^{r}}^{\{M\}}\leq
	\brkt{c_{[M]} \abs{a}_{p,m,[M]}^2 2^{2jm} 2^{j[M](1-\varrho)} \norm{f}_{L^{r'}}}^{1-\{M\}}\\
&\qquad\qquad\times\brkt{c_{[M]+1} \abs{a}_{p,m,[M]+1}^2  2^{2 j m} 2^{j([M]+1)(1-\varrho)} \norm{f}_{L^{r'}}}^{\{M\}}\\
	&\leq c_M  \abs{a}_{p,m,[M]+1}^2 2^{2 j m} 2^{ j M(1-\varrho)} \norm{f}_{L^{r'}}.
	\end{split}
\]

Assume now that $n<M<n+1$. Then writing $M=n+\{M\}$ and letting
\[
	R_{l}(x,y)=
	\sum _{\vert \alpha \vert\leq l}   \sum \binom{\alpha}{\beta}  2^{j\abs{\alpha}}\int_{\abs{\xi}\sim 2^{j}} \abs{\partial^{\beta}_{\xi} a(x,\xi)  \partial^{\alpha-\beta}_{\xi} a(y,\xi) }\, \dd \xi,
\]
we see that \eqref{eq:Kk} and \eqref{eq:mk} with $n$ and $n+1$ yields
\[
	\begin{split}
	\abs{K_j(x,y)} &=\abs{K_j(x,y)}^{1-\{M\}}\abs{K_j(x,y)}^{\{M\}}\\
			&\leq {R_n(x,y)}^{1-\{M\}}  {R_{n+1}(x,y)}^{\{M\}}  \brkt{1+2^j\abs{x-y}}^{-M}.
	\end{split}
\]
Hence, applying H\"older's inequality with the exponents $\frac{1}{\{M\}}$ and $\frac{1}{1-\{M\}}$ we get
\begin{multline*}
	\abs{S_j f(x)}\leq \brkt{\int R_n(x,y) \brkt{1+2^j\abs{x-y}}^{-M}\abs{f(y)}\, \dd y}^{1-\{M\}}\\
    \times \brkt{\int R_{n+1}(x,y) \brkt{1+2^j\abs{x-y}}^{-M}\abs{f(y)}\, \dd y}^{\{M\}},
\end{multline*}
and another application of the H\"older inequality with exponents $\frac{r}{\{M\}}$ and $\frac{r}{1-\{M\}}$ yields
\begin{multline*}
	\norm{S_j f}_{L^{r}}\leq \norm{\int R_n(\cdot,y) \brkt{1+2^j\abs{\cdot-y}}^{-M}\abs{f(y)}\, \dd y}^{1-\{M\}}_{L^{r}}\\
    \times
	\norm{\int R_{n+1}(\cdot,y) \brkt{1+2^j\abs{\cdot-y}}^{-M}\abs{f(y)}\, \dd y}^{\{M\}}_{L^{r}}.
\end{multline*}
Therefore, Minkowski's integral inequality and Lemma \eqref{lem:H_M} yield
\[
	\begin{split}
	\norm{S_j f}_{L^{r}}
	&\leq C_{M} \abs{a}_{p,m,n+1}^2 2^{2j m} 2^{j M(1-\varrho)} \norm{f}_{L^{r'}},
	\end{split}
\]
for all $f\in L^{r'}$.
Thus, using \eqref{LPpiece}, we obtain
\[
	\norm{T^*_{a_j} f}_{L^2}^2=\pr{f}{T_{a_j}T_{a_j}^*f}\leq \norm{f}_{L^{r'}} \norm{S_j f}_{L^{r}}\leq C_{M} \abs{a}_{p,m,[M]+1}^2 2^{2j m} 2^{j M(1-\varrho)}\norm{f}_{L^{r'}}^2,
\]
and so
\[
    \norm{T_{a_j} f}_{L^r}\leq C_{M} \abs{a}_{p,m,[M]+1} 2^{j m} 2^{j \frac{M(1-\varrho)}{2}}\norm{f}_{L^{2}},
\]
for every $f\in L^2$.

Now if $\varrho =1$ and $m<0$ we see that the sum of the Littlewood-Paley pieces $T_{a_j}$ converges and therefore $T_a$ is a bounded operator from $L^{2}$ to $L^{r}$. In case $0\leq \varrho<1$ then the condition $m<\frac{n}{2}(\varrho-1)$ implies that there is a $M_{0}$ with $n<M_{0}<\frac{-2m}{1-\varrho}$. So by choosing $M=M_0$, we have
 \begin{equation}
 \Vert T_{a_j}f\Vert_{L^{r}} \lesssim 2^{j m}2^{ j\frac{M_{0}(1-\varrho)}{2}}\Vert f\Vert_{L^{2}},
 \end{equation}
with $2m+M_{0}(1-\varrho)<0.$ This and the summation of the pieces yield the desired boundedness of $T_a$.
\end{proof}
\end{thm}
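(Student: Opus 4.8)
The plan is to run a $TT^{*}$ argument on Littlewood--Paley pieces. First I would fix a dyadic partition of unity as in \eqref{eq:LittlewoodPaley} and set $a_{j}(x,\xi)=a(x,\xi)\Psi_{j}(\xi)$, so that by Lemma \ref{lem:calculus_lemma} each $a_{j}\in L^{p}S^{m}_{\varrho}$ with seminorms bounded uniformly in $j$ by those of $a$. The low-frequency operator $T_{a_{0}}$ is bounded from $L^{2}$ to $L^{r}$ by Theorem \ref{Linearlow} (here $\frac1r=\frac1q+\frac1p$ with $q=2$), so the whole problem reduces to summing suitable bounds for the $T_{a_{j}}$, $j\geq 1$. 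Rather than bounding $T_{a_{j}}\colon L^{2}\to L^{r}$ directly, I would estimate $S_{j}=T_{a_{j}}T_{a_{j}}^{\ast}$ as an operator from $L^{r'}$ to $L^{r}$ and then use $\|T_{a_{j}}^{\ast}f\|_{L^{2}}^{2}=\pr{f}{S_{j}f}\leq\|f\|_{L^{r'}}\|S_{j}f\|_{L^{r}}$ together with duality to recover the bound on $T_{a_{j}}$.

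The operator $S_{j}$ has kernel $K_{j}(x,y)=(2\pi)^{-n}\int e^{i(\varphi(x,\xi)-\varphi(y,\xi))}a_{j}(x,\xi)\overline{a_{j}(y,\xi)}\,\dd\xi$. Using that $\varphi$ is homogeneous of degree $1$ in $\xi$, I would rescale $\xi\mapsto 2^{j}\xi$, so that the frequency support becomes the fixed annulus $\mathcal{K}=\set{\tfrac12\le|\xi|\le 2}$ and the phase becomes $2^{j}\Phi(x,y,\xi)$ with $\Phi(x,y,\xi)=\varphi(x,\xi)-\varphi(y,\xi)$. The key geometric input is that, by the mean value theorem together with \eqref{C_alpha} and the SND bound \eqref{lower_bound on mixed hessian}, one has $|\nabla_{\xi}\Phi(x,y,\xi)|\thickapprox|x-y|$ on $\mathcal{K}$, and that $\phi(\xi):=\Phi(x,y,\xi)$ satisfies the structural hypotheses of Lemma \ref{lem:technic} uniformly in $x,y$, namely $\abs{\d^{\alpha}_{\xi}\phi}\lesssim|\nabla_{\xi}\phi|$ and $\abs{\d^{\alpha}_{\xi}(|\nabla_{\xi}\phi|^{2})}\lesssim|\nabla_{\xi}\phi|^{2}$ for $|\alpha|\ge1$. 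Applying Lemma \ref{lem:technic} with $\lambda=2^{j}$ and $F=m_{j}(x,y,2^{j}\cdot)$ (where $m_{j}(x,y,\xi)=a_{j}(x,\xi)\overline{a_{j}(y,\xi)}$), and combining with the trivial bound $|K_{j}(x,y)|\lesssim\int|m_{j}(x,y,\xi)|\,\dd\xi$ valid when $2^{j}|x-y|\le1$, I would obtain the pointwise kernel estimate
\[
\abs{ K_{j}(x,y)}\lesssim\brkt{1+2^{j}\abs{x-y}}^{-M}\sum_{\abs{\alpha}\le M}2^{j\abs{\alpha}}\int\abs{\partial^{\alpha}_{\xi}m_{j}(x,y,\xi)}\,\dd\xi
\]
for every integer $M>n$.

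Expanding $\partial^{\alpha}_{\xi}m_{j}$ by Leibniz's rule, the resulting contribution to $S_{j}f$ is exactly controlled by the auxiliary operators $H_{M}^{\alpha,\beta}$ of Lemma \ref{lem:H_M}. That lemma---which exploits the numerology $\tfrac1r=\tfrac1p+\tfrac12$ and $\tfrac12=\tfrac1p+\tfrac1{r'}$---gives, after Minkowski's inequality and summing the $\xi$-integral over $\abs{\xi}\sim 2^{j}$, the bound $\|S_{j}f\|_{L^{r}}\lesssim\abs{a}_{p,m,[M]+1}^{2}\,2^{2jm}2^{jM(1-\varrho)}\|f\|_{L^{r'}}$, first for integer $M$ and then for arbitrary real $M>n$ by interpolating (H\"older with exponents $1/\{M\}$ and $1/(1-\{M\})$) between the integer bounds at $[M]$ and $[M]+1$ when $M\ge n+1$, and between $n$ and $n+1$ when $n<M<n+1$. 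From the $TT^{*}$ identity this yields $\|T_{a_{j}}f\|_{L^{r}}\lesssim\abs{a}_{p,m,[M]+1}\,2^{jm}2^{jM(1-\varrho)/2}\|f\|_{L^{2}}$ for every real $M>n$. It then remains to sum over $j$: if $\varrho=1$ the factor is $2^{jm}$ with $m<0$ and the series converges; if $0\le\varrho<1$, the hypothesis $m<\tfrac{n}{2}(\varrho-1)$ is precisely what permits a choice of $M_{0}$ with $n<M_{0}<\tfrac{-2m}{1-\varrho}$, so that $m+\tfrac{M_{0}(1-\varrho)}{2}<0$, and---using that $\|\cdot\|_{L^{r}}^{R}$ is subadditive for $R=\min(r,1)$ when $r<1$---the pieces sum to $\|T_{a}f\|_{L^{r}}\lesssim\|f\|_{L^{2}}$.

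I expect the main obstacle to be the pointwise kernel estimate: one must verify that the phase $\Phi(x,y,\cdot)$ satisfies the hypotheses of Lemma \ref{lem:technic} with constants uniform in $x$ and $y$, which requires combining the SND lower bound with the $\Phi^{2}$ derivative bounds through a careful mean-value-theorem argument (in particular controlling $\d^{\alpha}_{\xi}(|\nabla_{\xi}\Phi|^{2})$ by $|\nabla_{\xi}\Phi|^{2}$), and subsequently the bookkeeping of non-integer $M$ so that the summability threshold matches the stated range $m<\tfrac n2(\varrho-1)$ exactly.
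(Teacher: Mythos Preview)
Your proposal is correct and follows the paper's proof essentially step for step: the same Littlewood--Paley reduction, the same $TT^{\ast}$ argument on $S_{j}=T_{a_{j}}T_{a_{j}}^{\ast}$, the same kernel estimate via Lemma \ref{lem:technic} using $|\nabla_{\xi}\Phi|\thickapprox|x-y|$, the same appeal to Lemma \ref{lem:H_M}, the same two-case interpolation for non-integer $M$, and the same summation. One harmless over-precaution: since $p\ge 2$ forces $r=\tfrac{2p}{p+2}\ge 1$, the quasi-Banach subadditivity with $R=\min(r,1)$ is not needed here.
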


Here, we shall define a couple of parameters which will appear as the order of our operators in the remainder of this paper.
\begin{definition}\label{definition rune M} Given $1\leq p,\, q\leq \infty$ define
\[
    \textarc{m}(\varrho,p,q)=\left\{
      \begin{array}{ll}
         \frac{n(\varrho-1)}{\min(p,q)}-\frac{(n-1)}{2}\brkt{\frac{1}{p}+\frac{1}{\min(p,q)}}, & \hbox{if $1\leq p<2$, or $p\geq 2$ and $1\leq q<p'$;} \\
        \frac{n(\varrho-1)}{2}-(n-1)\brkt{\frac{1}{2}-\frac{1}{q}}, & \hbox{if $2\leq p,q$;} \\
        \frac{n(\varrho-1)}{q}-\frac{(n-1)}{1-\frac{2}{p}}\brkt{\frac{1}{q}-\frac{1}{2}}, & \hbox{if $p> 2$ and $p'\leq q\leq 2$.}
      \end{array}
    \right.
\]
Furthermore given $1<q<2$ we set
\[\mathcal{M}(\varrho,p,q)=\frac{n(\varrho-1)}{q}-\frac{n-1}{1+1/p}\brkt{\frac{1}{q}-\frac{1}{2}}. 
\]
\end{definition}
Interpolating the result of Theorem \ref{thmL2} with the extremal results of Theorem \ref{thm:Linear} using Riesz-Thorin and Marcinkiewicz interpolation theorems yields:
\begin{thm}\label{thm:p2} 
Suppose that $0<r\leq \infty$, $1\leq q \leq \infty$, $2\leq p\leq \infty$ satisfy the relation $\frac 1 r = \frac 1 q + \frac 1 p$. Assume that $\varphi\in \Phi^2$ satisfies the SND condition and let $a\in L^pS^m_\varrho$ with $0\leq \varrho \leq 1$ and $m<\textarc{m}(\varrho,p,q).$
Then  the FIO $T_a$ defined as in \eqref{definition basic linear FIO} is bounded from $L^q$ to $L^r$.
Furthermore, when $1<q<2$ and
$$
\textarc{m}(\varrho,p,q)\leq m<\mathcal{M}(\varrho,p,q),
$$
the FIO $T_a$ is bounded from $L^q$ to the Lorentz space $L^{r,q}$.
\end{thm}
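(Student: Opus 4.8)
The plan is to obtain the full theorem by interpolating between two endpoint families of estimates already at our disposal. The first family comes from Theorem \ref{thmL2}: for $2\le p\le\infty$ and $a\in L^pS^m_\varrho$ with $m<n(\varrho-1)/2$, the operator $T_a$ maps $L^2\to L^{r_0}$ with $r_0=\tfrac{2p}{p+2}$, i.e. $\tfrac1{r_0}=\tfrac1p+\tfrac12$. The second family comes from Theorem \ref{thm:Linear}: for the same $p$ and any $q$, with $m$ below the threshold in \eqref{eq:m_1} with $s=\min(2,p,q)=\min(2,q)$, $T_a$ maps $L^q\to L^r$, $\tfrac1r=\tfrac1q+\tfrac1p$. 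First I would fix $p\ge 2$ and $m<\textarc{m}(\varrho,p,q)$ as in the statement, and analyse separately the three regimes in Definition \ref{definition rune M}. In the regime $2\le p,q$, the threshold $\textarc{m}(\varrho,p,q)=\tfrac{n(\varrho-1)}2-(n-1)(\tfrac12-\tfrac1q)$ is obtained by Riesz--Thorin interpolation between the $L^2\to L^{r_0}$ bound of Theorem \ref{thmL2} (which corresponds to $q=2$, where $(n-1)(\tfrac12-\tfrac1q)=0$) and the $L^\infty\to L^p$ bound of Theorem \ref{thm:Linear} with $s=2$; one checks that the exponent pair $(\tfrac1q,\tfrac1r)$ lies on the segment joining $(\tfrac12,\tfrac1{r_0})$ and $(0,\tfrac1p)$, and that the order $m$ interpolates linearly to give exactly $\textarc{m}(\varrho,p,q)$. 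In the regime $p\ge2$, $1\le q<p'$ (equivalently $s=\min(2,q)$ and the first line of the definition applies), the threshold already coincides with the bound of Theorem \ref{thm:Linear} directly — after checking that for these $q$ the right-hand side of \eqref{eq:m_1} equals $\tfrac{n(\varrho-1)}{\min(p,q)}-\tfrac{n-1}2(\tfrac1p+\tfrac1{\min(p,q)})$ — so no interpolation is needed there, only a verification that the formula in \eqref{eq:m_1} rewrites as the first branch of $\textarc{m}$.

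For the third regime, $p>2$ and $p'\le q\le 2$, I would again use Riesz--Thorin, this time between the $L^2\to L^{r_0}$ estimate of Theorem \ref{thmL2} and the $L^{p'}\to L^\infty$ (or rather $L^{p'}\to L^{r}$ with the corresponding $r$, note $\tfrac1{p'}+\tfrac1p=1$) endpoint of Theorem \ref{thm:Linear} with $s=\min(2,p,p')=p'$ (since $p>2$ forces $p'<2$); the resulting interpolated order along the segment from $(\tfrac12,\tfrac1{r_0})$ to $(\tfrac1{p'},\tfrac1{p'}-\tfrac1p\cdot\!...)$ is computed to be $\tfrac{n(\varrho-1)}q-\tfrac{n-1}{1-2/p}(\tfrac1q-\tfrac12)$, matching the third branch. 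In all three cases the hypothesis $m<\textarc{m}(\varrho,p,q)$ gives strict inequality, so a small $\varepsilon$ of room is available and the interpolation of the order is legitimate; one should be slightly careful that the strictness of the order conditions in Theorems \ref{thmL2} and \ref{thm:Linear} is preserved under interpolation, which it is because the set of admissible $m$ is an open half-line in each endpoint.

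For the final assertion — the $L^q\to L^{r,q}$ bound for $1<q<2$ when $\textarc{m}(\varrho,p,q)\le m<\mathcal M(\varrho,p,q)$ — I would instead invoke the Marcinkiewicz (real) interpolation theorem. The point is that when $m$ exceeds the Riesz--Thorin threshold $\textarc{m}$ we can no longer land in $L^r$ by complex interpolation, but we can still land in a Lorentz space by interpolating the \emph{same} two endpoint estimates with the real method: Marcinkiewicz applied to the pair of strong-type endpoints yields, at an interior point with $1<q<2$, a map into the Lorentz space $L^{r,q}$ (the second index being the domain index $q$, as is standard for off-diagonal Marcinkiewicz interpolation). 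The new threshold $\mathcal M(\varrho,p,q)=\tfrac{n(\varrho-1)}q-\tfrac{n-1}{1+1/p}(\tfrac1q-\tfrac12)$ should emerge as the order obtained by interpolating between the $L^2\to L^{r_0}$ endpoint and the \emph{other} extremal endpoint of Theorem \ref{thm:Linear} appropriate to $1<q<2$, namely the $L^1$-endpoint where $s=1$; since the weight $\tfrac1{1+1/p}$ differs from $\tfrac1{1-2/p}$, this is a genuinely different segment, and one verifies the arithmetic that the real-interpolation order along it is exactly $\mathcal M$.

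The main obstacle I anticipate is purely bookkeeping but genuinely delicate: correctly identifying, in each of the three regimes, which two endpoint estimates from Theorems \ref{thmL2} and \ref{thm:Linear} to interpolate (i.e. which value of $s$ and which boundary exponent $q\in\{1,2,p',\infty\}$), and then checking that the linear interpolation of the orders $m$ along the corresponding segment in the $(\tfrac1q,\tfrac1r)$-plane reproduces \emph{exactly} the piecewise formula for $\textarc{m}(\varrho,p,q)$ and for $\mathcal M(\varrho,p,q)$. A secondary subtlety is the appearance of the Lorentz target $L^{r,q}$: one must make sure the version of the Marcinkiewicz theorem invoked gives the second Lorentz index equal to $q$ (not $r$), which forces $q<r$ or the analytic inequalities on the indices to be arranged so that the off-diagonal real interpolation theorem applies; here $\tfrac1r=\tfrac1q+\tfrac1p<\tfrac1q$ since $p<\infty$, so indeed $r>q$ and the standard statement applies. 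Once these identifications are pinned down, the rest is a direct citation of Riesz--Thorin and Marcinkiewicz.
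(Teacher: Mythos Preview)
Your overall plan---interpolate Theorem~\ref{thmL2} with the extremal cases of Theorem~\ref{thm:Linear} via Riesz--Thorin and Marcinkiewicz---is exactly the paper's approach, and your treatment of the three regimes for $\textarc{m}(\varrho,p,q)$ is correct: the second branch comes from the segment joining $(q,r)=(2,r_0)$ to $(\infty,p)$, the third from the segment joining $(2,r_0)$ to $(p',1)$, and the first branch is read off directly from Theorem~\ref{thm:Linear}.

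There is, however, a genuine gap in your handling of the Lorentz assertion. You propose to obtain $\mathcal{M}(\varrho,p,q)$ by interpolating between the $L^2\to L^{r_0}$ endpoint of Theorem~\ref{thmL2} and the $L^1$ endpoint of Theorem~\ref{thm:Linear} (where $s=1$). But the linear interpolation of orders along that segment gives
\[
\frac{n(\varrho-1)}{q}-(n-1)\Bigl(1+\frac1p\Bigr)\Bigl(\frac1q-\frac12\Bigr),
\]
not $\mathcal{M}(\varrho,p,q)=\frac{n(\varrho-1)}{q}-\frac{n-1}{1+1/p}(\frac1q-\frac12)$: the factor $(1+1/p)$ sits in the numerator, not the denominator. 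Worse, the threshold you actually obtain this way lies \emph{below} $\textarc{m}$ on the interval $p'<q<2$ (the straight line from $q=1$ to $q=2$ is the chord of the piecewise-linear graph of $\textarc{m}$, which is concave there), so it adds nothing to the first part. You also have a related misconception: real interpolation between the \emph{same} two strong-type endpoints cannot produce a larger admissible range of $m$ than complex interpolation; it only changes the target from $L^r$ to $L^{r,q}$. To reach $\mathcal{M}$ you need a different pair of endpoints (or a direct piecewise estimate on the Littlewood--Paley blocks followed by real interpolation of the resulting family of bounds), and that choice should be identified explicitly before claiming the arithmetic works.

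A minor slip: you write $\frac1r=\frac1q+\frac1p<\frac1q$ and conclude $r>q$; in fact $\frac1r>\frac1q$, so $r<q$ and $L^{r,q}\supsetneq L^r$. This does not affect the applicability of the general real interpolation theorem (which yields $L^{q,s}\to L^{r,s}$ for all $s$), but the inequality should be corrected.
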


\begin{remark}\label{rem:local} In theorems \ref{Linearlow}, \ref{thm:Linear}, \ref{thmL2} and \ref{thm:p2} we can replace the assumption of strong non-degeneracy of the phase function with the mere non-degeneracy condition, but then we need to add an extra assumption of compact support of the amplitude in the $x$-variable.
\end{remark}
\subsection{Boundedness of $\Psi$DOs}

A careful look at the proof of Theorem \ref{thm:Linear} reveals that in the study of rough $\Psi$DOs the Seeger-Sogge-Stein frequency decomposition is unnecessary, and it suffices to  use a Littlewood-Paley decomposition.  Therefore with minor modifications, the proof  of the aforementioned theorem carries over to the case of $\Psi$DOs, with the difference that there will be no contribution (loss of derivatives) due to the existence on a non-linear phase function in the operator.  So, we obtain the following result  which extends those in \cite{KenStaub, MRS}. The details are left to the interested reader.
\begin{thm}\label{thm:LinearPDO}
 Suppose that $0<r\leq \infty$, $1\leq p,q \leq \infty$, satisfy the relation $\frac 1 r = \frac 1 q + \frac 1 p$. Let $a \in L^p S^m_\varrho$ with $0\leq \varrho \leq 1$ and
\begin{equation*}
	m < \frac{n(\varrho-1)}{\min(2,p,q)}.
\end{equation*}
Then the pseudodifferential operator $T_a f(x)=\int a(x,\xi) e^{i\p{x,\xi}} \widehat{f}(\xi)\dd \xi$ is bounded from $L^q$ to $L^r$.
\end{thm}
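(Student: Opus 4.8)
The plan is to mimic the proof of Theorem \ref{thm:Linear}, specializing the phase to $\varphi(x,\xi)=\langle x,\xi\rangle$ and observing that this simplifies the argument in two crucial ways: the Seeger--Sogge--Stein second dyadic (cone) decomposition is no longer needed, so one works only with a plain Littlewood--Paley decomposition $a_j(x,\xi)=a(x,\xi)\Psi_j(\xi)$, and there is no loss of derivatives coming from a curved phase, so the factor $2^{2N(1-\varrho)j}$ that appeared from applying the operator $L=I-2^{2j}\partial^2_{\xi_1}-2^j\Delta_{\xi'}$ never materializes. First I would isolate the low-frequency piece $T_{a_0}$: since $\Psi_0$ is compactly supported in $\xi$ and $\varphi(x,\xi)=\langle x,\xi\rangle\in\Phi^2$ trivially satisfies the SND condition (its mixed Hessian is the identity), Theorem \ref{Linearlow} gives the desired $L^q\to L^r$ bound. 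It then remains to control $\sum_{j\ge 1}T_{a_j}$.

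For the high-frequency pieces, I would write $T_{a_j}f(x)=\int K_j(x,x-y)f(y)\,\dd y$ with $K_j(x,z)=(2\pi)^{-n}\int a_j(x,\xi)e^{i\langle z,\xi\rangle}\,\dd\xi$, rescale $\xi\mapsto 2^j\xi$ so the integrand is supported in the fixed annulus $\{\tfrac12\le|\xi|\le 2\}$, and run the non-stationary phase / integration-by-parts estimate exactly as in the $\textbf{I}_1+\textbf{I}_2$ splitting of Theorem \ref{thm:Linear}, but now with the trivial phase $\Phi(x,y,\xi)=\langle x-y,\xi\rangle$ so that $\nabla_\xi\Phi=x-y$ identically. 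One splits $\int\vert K_j(x,y)f(x-y)\vert\,\dd y$ over $\{|y|\le 2^{-j\varrho/\alpha}\}$ and its complement, uses H\"older (in $\nu$ — which is now absent — and $y$) together with Hausdorff--Young in $y$, the bound $\norm{a_j(\cdot,\xi)}_{L^p}\lesssim \langle\xi\rangle^{m-\varrho|\alpha|}$ from $a\in L^pS^m_\varrho$, the measure bound $|\{|\xi|\sim 2^j\}|\lesssim 2^{jn}$ (replacing the cone-count $2^{j(n+1)/2}\cdot 2^{j(n-1)/2}$), and Minkowski's integral inequality to pull the $L^q$ norm of $f$ out via the change of variables $y\mapsto x-y$. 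The upshot is $\norm{T_{a_j}f}_{L^r}\lesssim 2^{j(m-n\varrho/(\alpha s)+ \text{(bounded terms)})}\norm{f}_{L^q}$ on the inner piece and $2^{j(m+\text{const})}\norm{f}_{L^q}$ on the outer piece (no $(1-\varrho)$ derivative loss), and letting $\alpha\to\infty$ then using $R=\min(r,1)$ to sum a geometric series over $j$ gives convergence precisely when $m<n(\varrho-1)/\min(2,p,q)$. Here $s=\min(2,p,q)$ and the $(n-1)/2$ anisotropic gains of Theorem \ref{thm:Linear} disappear because the kernel decays isotropically.

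Alternatively, and perhaps more cleanly, for the case $q=2\le p$ one can reproduce the $T_{a_j}T^*_{a_j}$ argument of Theorem \ref{thmL2} verbatim with $\varphi(x,\xi)=\langle x,\xi\rangle$: then $\Phi(x,y,\xi)=\langle x-y,\xi\rangle$, $|\nabla_\xi\Phi|=|x-y|$, Lemma \ref{lem:technic} and Lemma \ref{lem:H_M} apply with no change, and one still obtains $\norm{T_{a_j}f}_{L^r}\lesssim \abs{a}_{p,m,[M]+1}2^{jm}2^{jM(1-\varrho)/2}\norm{f}_{L^2}$; combining with the extremal cases handled by the direct method above and interpolating covers every admissible $(p,q)$. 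Either way, the main obstacle is bookkeeping: one must verify that every estimate in the proofs of Theorems \ref{thm:Linear} and \ref{thmL2} that invoked the cone decomposition (the $\chi^\nu_j$ estimates \eqref{chiestimate 1}--\eqref{chiestimate 2}, the phase bounds \eqref{phaseestim1}--\eqref{phaseestim2}, the $\xi$-support size $O(2^{j(n+1)/2})$, and the auxiliary operator $L$) degenerates correctly when the cone is replaced by the full annulus and the curved phase by the linear one, so that the exponent in the geometric sum collapses from $m+2M(1-\varrho)+\tfrac{n-1}{2}(\tfrac1s+\tfrac1{\min(p,s')})$ down to $m-n(\varrho-1)/s$. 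Since every ingredient is already in place and the simplification only removes terms, no genuinely new difficulty arises, which is why the details can safely be left to the reader.
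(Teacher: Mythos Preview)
Your outline matches the paper's own (sketched) argument: replace the Seeger--Sogge--Stein cone decomposition by a plain Littlewood--Paley decomposition and rerun the $\mathbf{I}_1/\mathbf{I}_2$ estimate of Theorem~\ref{thm:Linear} with the isotropic weight $h(y)=1+2^{2j}|y|^2$.

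One correction, however: the factor $2^{2M(1-\varrho)j}$ does \emph{not} come from the curved phase and does \emph{not} disappear in the pseudodifferential case. It arises from the $\xi$-derivatives of the amplitude $a$ itself (each derivative costs $\langle\xi\rangle^{-\varrho}\sim 2^{-j\varrho}$ in $L^p$ norm, against the weight $2^{j}$ built into the operator), so applying the isotropic analogue $\tilde L=I-2^{2j}\Delta_\xi$ to $a_j$ still produces $\|\tilde L^M a_j(\cdot,\xi)\|_{L^p}\lesssim 2^{j(m+2M(1-\varrho))}$, and it is precisely the optimization $2M\downarrow n/s$ over this factor that yields the threshold $m<n(\varrho-1)/s$ you correctly state at the end. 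What actually vanishes when the phase is linear is only the $\tfrac{n-1}{2}\bigl(\tfrac1s+\tfrac1{\min(p,s')}\bigr)$ contribution, which came from the cone count and the anisotropic $\xi$-support measure. Your intermediate assertion ``no $(1-\varrho)$ derivative loss'' is inconsistent with your own final exponent (taken literally it would give the wrong threshold $m<0$); once this bookkeeping is straightened out the argument goes through.
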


\begin{remark}Observe that for $p=\infty$ and $q=2$ the result is sharp (see \cite{KenStaub}).
\end{remark}
\section{Applications to the boundedness of multilinear Pseudodifferential and Fourier integral operators}\label{bilinear PsiDo and FIO}
Before we state and prove the boundedness results for multilinear operators, we shall define the classes of symbols (or amplitudes) that we are concerned with in this paper. The following definitions, given for a fixed natural number $N\geq 1$, extend definitions \ref{definition of hormander amplitudes} and \ref{LpSmrho definition}.
\begin{definition}\label{definition multilinear hormander amplitudes}
Given $m\in \R$ and $\varrho, \delta \in [0,1],$ the amplitude $a(x,\xi_1,\dots,\xi_N)\in C^{\infty} (\R^n \times \R^{Nn})$ belongs to the multilinear H\"ormander class $S^{m}_{\varrho, \delta} (n, N)$ provided that for all multi-indices $\beta$, $\alpha_1,\dots,\alpha_N$ in $\Z_{+}^{n}$ it verifies
\begin{equation}
    \abs{\partial^\beta_x \partial_{\xi_1}^{\alpha_1}\dots\partial_{\xi_N}^{\alpha_N} a(x,\xi_1,\dots,\xi_N)}\leq C_{\alpha_1,\dots,\alpha_N,\beta} \brkt{1+\abs{\xi_1}+\dots+\abs{\xi_N}}^{m-\varrho\sum_{j=1}^{N}\abs{\alpha_j}+\delta\abs{\beta}}.
\end{equation}

\end{definition}

We shall also use the classes of non-smooth amplitudes one of which is defined as follows:

\begin{definition}\label{definition multilinear product type amplitudes} Let $\vec m=(m_1,\dots, m_N)\in \R^{N}$ and ${\vec \varrho}=(\varrho_1,\dots,\varrho_N)\in [0,1]^N$. The amplitude $a(x,\xi_1,\ldots, \xi_N)$, defined for $x,\xi_1,\ldots,\xi_N\in \R^n$, belongs to the class $L_{\Pi}^p S^{\vec m}_{\vec \varrho}(n,N)$ if for any multi-indices $\alpha_1,\dots,\alpha_N$  in $\Z_{+}^{n}$,  there exists a constant $C_{\alpha_1, \dots, \alpha_N}$ such that
\begin{equation}\label{eq:Lp}
    \norm{\partial_{\xi_1}^{\alpha_1}\dots\partial_{\xi_N}^{\alpha_N} a(\cdot,\xi_1,\dots, \xi_N)}_{L^p}\leq C_{\alpha_1, \dots, \alpha_N} \prod_{j=1}^N \p{\xi_j}^{m_j-\varrho_j\abs{\alpha_j}}.
\end{equation}
\end{definition}

We remark that the subscript $\Pi$ in the notation $ L_{\Pi}^p S^{\vec m}_{\vec \varrho}(n,2)$ is there to indicate the product structure of these type of amplitudes.
 From now on, we shall fix $N\geq 2$. 
\begin{example}
Any amplitude in the class $S^{0}_{1,1}(n,N)$ considered by Grafakos and Torres in \cite{GrafTor}, belongs to $L_\Pi ^\infty S^{(0,\dots,0)}_{(1,\dots, 1)}(n,N)$.
\end{example}

\begin{example}
Let $a_j (x, \xi_j)\in L^{p_j}S^{m_j}_{\varrho_j}$ for $j=1,\dots, N$, be a collection of linear amplitudes and assume that $\frac{1}{p}=\sum_{j=1}^{N}\frac{1}{p_j}.$ Then the multilinear amplitude
$\prod_{j=1}^{N}a_j (x, \xi_j)$ belongs to the class $L_\Pi ^pS^{(m_1,\ldots,m_N)}_{(\varrho_1,\ldots,\varrho_N)}(n,N).$
\end{example}

Also we have the following class of non-smooth amplitudes introduced in \cite{MRS}.
\begin{definition}\label{definition multilinear MRS amplitudes} The amplitude $a(x,\xi_1,\ldots,\xi_N)$, defined for $x,\xi_1,\ldots,\xi_N\in \R^n$, belongs to the class $L^p S^{m}_{\varrho}(n,N)$ if for any multi-indices $\alpha_1,\dots, \alpha_N$ there exists a constant $C_{\alpha_1, \dots, \alpha_N}$ such that
\begin{equation}
    \norm{\partial_{\xi_1}^{\alpha_1}\dots\partial_{\xi_N}^{\alpha_N} a(\cdot,\xi_1,\dots ,\xi_N)}_{L^p}\leq C_{\alpha_1, \dots, \alpha_N} \brkt{1+\abs{\xi_1}+\dots+\abs{\xi_N}}^{m-\varrho\sum_{j=1}^{N}\abs{\alpha_j}}.
\end{equation}
\end{definition}

\begin{example}\label{interesting example}
It is easy to see that if $m\leq 0,$ $m_j\leq 0$ for $j=1, \dots, N$ and $p\in [1, \infty],$ then
\[
    L^p S^{m}_\varrho(n,N)\subset \bigcap_{m_1+\dots +m_N=m} L^p_\Pi S^{(m_1,\dots,m_N)}_{(\varrho,\dots, \varrho)}(n,N).
\]
Moreover, for any $\varrho,\delta\in [0,1]$, $S^{m}_{\varrho, \delta}(n,N)\subset L^\infty S^{m}_\varrho(n,N)$ .
\end{example}

\subsection{Boundedness of multilinear FIOs}
In this section we shall apply the boundedness of the linear FIOs obtained in the previous section to the problem of boundedness of bilinear and multilinear operators.

To any class of the amplitudes defined above we associate the corresponding multilinear Fourier integral operator given by
\begin{equation}\label{eq:definition_FIO}
T_a(f_1, \dots, f_N) (x)= \int_{\R^{Nn}} a(x,\xi_1, \dots, \xi_N)\, e^{ i\sum_{j=1}^{N} \varphi_{j}(x, \xi_{j})} \, \prod_{j=1}^{N}\widehat{f}(\xi_j)\, \dd \xi_1\dots \dd \xi_N.
\end{equation}

At this point, for the sake of simplicity of the exposition and until further notice, we confine ourselves to the study of boundedness of bilinear operators. Here, using an iteration procedure, we are able to reduce the problem of global boundedness of bilinear FIOs to that of boundedness of rough and linear FIOs. Our main result in this context is as follows.

\begin{thm}\label{thm: bilinear product type operators} Suppose that $0<r\leq \infty$, $1\leq p,q_1,q_2 \leq \infty$, satisfy that $\frac{1}{r}=\frac{1}{p}+\frac{1}{q_1}+\frac{1}{q_2}$ and $q_1=\max(q_1,q_2)\geq p'$. Assume that $\varphi_1,\varphi_2\in \Phi^2$ satisfy the SND condition and let $a\in L_\Pi ^{p}S^{(m_1,m_2)}_{(\varrho_1 , \varrho_2)}(n,2)$ with $0\leq \varrho_1 , \varrho_2\leq 1$ and 
\[
    m_1<\textarc{m}(\varrho_1,p,q_1) \quad {\rm and} \quad m_2<\textarc{m}(\varrho_2,r_2,q_2),
\]
with $\frac{1}{r_2}=\frac{1}{p}+\frac{1}{q_1}.$ Then the bilinear FIO $T_a$, defined by
\begin{equation}
\label{Intro:bilinear_Fourier integral operator}
 T_{a} (f,g)(x)= \iint a(x,\xi,\eta)\, e^{ i \varphi_{1}(x, \xi)+ i\varphi_{2}(x, \eta)} \hat{ f}(\xi) \hat{g}(\eta)\,\dd \xi\, \dd \eta,
\end{equation}
satisfies the estimate
\[
    \norm{T_a(f,g)}_{L^r}\leq C_{a,n} \norm{f}_{L^{q_1}}\norm{g}_{L^{q_2}},\quad \text{for any $f,\,g\in \S$}.
\]
Moreover, if $1\leq q_2<2\leq r_2$,
\[
   m_1<\textarc{m}(\varrho_1,p,q_1) \quad {\rm and} \quad \textarc{m}(\varrho_2,r_2,q_2)\leq m_2<\mathcal{M}(\varrho_2,r_2,q_2),
\]
then
\[
    \norm{T_a(f,g)}_{L^{r,q_2}}\leq C_{a,n} \norm{f}_{L^{q_1}}\norm{g}_{L^{q_2}},\quad \text{for any $f,\,g\in \S$}.
\]
\end{thm}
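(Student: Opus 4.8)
The plan is to prove Theorem~\ref{thm: bilinear product type operators} by an iteration (``freezing'') argument that reduces the bilinear estimate to two successive applications of the linear theorems already established. The key observation is that the product-type amplitude $a\in L_\Pi^p S^{(m_1,m_2)}_{(\varrho_1,\varrho_2)}(n,2)$ behaves, for each fixed value of one frequency variable, like a linear rough amplitude in the other. Concretely, first I would fix $\eta\in\R^n$ and regard
\[
    b_\eta(x,\xi):= a(x,\xi,\eta)\,\p{\eta}^{-m_2+\varrho_2|\alpha|}
\]
--- or rather $a(x,\xi,\eta)$ itself with the $\eta$-dependence carried along as a parameter --- as a linear amplitude in $(x,\xi)$. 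The defining estimate \eqref{eq:Lp} with $\alpha_2=0$ shows that $a(\cdot,\cdot,\eta)\in L^p S^{m_1}_{\varrho_1}$ with seminorms controlled by $\p{\eta}^{m_2}$ (and differentiating in $\eta$ first only improves the decay). Writing
\[
    T_a(f,g)(x)=\int_{\R^n} e^{i\varphi_2(x,\eta)}\,\widehat g(\eta)\,\Big[(2\pi)^{-n}\!\int_{\R^n} e^{i\varphi_1(x,\xi)} a(x,\xi,\eta)\widehat f(\xi)\,\dd\xi\Big]\,\dd\eta,
\]
the inner bracket is, for each $\eta$, a linear FIO with phase $\varphi_1\in\Phi^2$ (SND) applied to $f$; by Theorem~\ref{thm:p2} (and Theorem~\ref{thm:Linear} in the endpoint range), using $m_1<\textarc{m}(\varrho_1,p,q_1)$ and the hypothesis $q_1\ge p'$ so that the relevant branch of $\textarc{m}$ applies, this maps $L^{q_1}\to L^{r_2}$ with $\frac1{r_2}=\frac1p+\frac1{q_1}$, and the operator norm is $\lesssim\p{\eta}^{m_2}$.

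Next, Minkowski's integral inequality (for $r_2\ge 1$; Hölder's inequality with exponent $r_2<1$ when $q_1,p$ force that) lets me pull the $L^{r_2}_x$ norm inside the $\eta$-integral, so that
\[
    \norm{T_a(f,g)}_{L^{r}}\ \le\ \Big\|\int_{\R^n} |\widehat g(\eta)|\,\big\|e^{i\varphi_2(\cdot,\eta)}(\text{inner FIO in }\xi)\big\|_{\text{op}}\,\dd\eta\Big\|\dots
\]
but this is too crude; the correct route is to observe that after applying the inner linear FIO in the $\xi$ variable, what remains is genuinely a \emph{linear} FIO in $\eta$ acting on $g$, with amplitude $\tilde a(x,\eta):=$ (the $L^{q_1}\to L^{r_2}$ operator-valued symbol) and phase $\varphi_2\in\Phi^2$. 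The delicate point is that $\tilde a$ now takes values in bounded operators $L^{q_1}\to L^{r_2}$ rather than in $\C$, so I would run the proof of Theorem~\ref{thm:p2}/\ref{thm:Linear} verbatim but with scalar absolute values replaced by operator norms: the Seeger--Sogge--Stein decomposition, the kernel estimates via Lemma~\ref{lem:technic} and Lemma~\ref{lem:fuijiwara}, and the Hausdorff--Young / Minkowski manipulations all go through because they only use the triangle inequality, Hölder, and the homogeneity of $\varphi_2$ --- none of which sees whether the amplitude is scalar- or operator-valued. The $L^p$-in-$x$ estimate \eqref{eq:Lp} with $\alpha_1=0$ gives exactly that $\tilde a\in L^{p}S^{m_2}_{\varrho_2}$ as an operator-valued rough amplitude (with $x$-integrability in $L^p$ and the operator norm replacing $|\cdot|$), so the condition $m_2<\textarc{m}(\varrho_2,r_2,q_2)$ with $\frac1r=\frac1{r_2}+\frac1{q_2}$ yields $T_a:L^{q_1}\times L^{q_2}\to L^r$. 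The Lorentz-space refinement follows identically from the second, Lorentz-valued, part of Theorem~\ref{thm:p2} applied in the outer $\eta$-variable under the hypothesis $\textarc{m}(\varrho_2,r_2,q_2)\le m_2<\mathcal M(\varrho_2,r_2,q_2)$ with $1\le q_2<2\le r_2$, again with scalar norms replaced by operator (resp.\ vector-valued $L^{q_1}$) norms throughout.

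The main obstacle I anticipate is making the operator-valued version of Theorems~\ref{thm:Linear} and \ref{thm:p2} rigorous: one must check that every inequality in those proofs is a ``Banach-space-valued'' inequality, in particular that the Hausdorff--Young inequality used in controlling $\mathbf I_1$ and $\mathbf I_2$ and the $TT^*$ argument of Theorem~\ref{thmL2} (which underlies Theorem~\ref{thm:p2} through Lemma~\ref{lem:H_M}) survive when the target is $L^{q_1}$-valued rather than $\C$-valued. The $TT^*$ step is the touchy one, since duality $L^2=(L^2)^*$ is special; I expect that one avoids this by only invoking the \emph{already-proven} linear mapping properties $L^{q_1}\to L^{r_2}$ as a black box --- i.e.\ one does not re-run $TT^*$, but rather treats the inner FIO's mapping norm as the ``size'' of the operator-valued symbol and feeds that into the \emph{statement} of Theorem~\ref{thm:p2}, whose proof for $L^q$-valued amplitudes needs only Minkowski, Hölder, and Hausdorff--Young, all of which hold for Bochner integrals. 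A secondary technical point is bookkeeping the dependence of the inner operator norm on $\eta$ and its $\eta$-derivatives: differentiating \eqref{eq:Lp} in $\eta$ shows $\partial_\eta^{\alpha_2}\tilde a(x,\eta)$ has operator norm $\lesssim\p{\eta}^{m_2-\varrho_2|\alpha_2|}$, which is exactly the decay required for the class $L^p S^{m_2}_{\varrho_2}$, so this is routine once the operator-valued framework is set up. Finally one must verify that the ranges $r_2<1$ (quasi-Banach) cause no trouble --- but Theorems~\ref{Linearlow} and \ref{thm:Linear} are already stated and proved for $0<r\le\infty$, so the iteration stays within their scope.
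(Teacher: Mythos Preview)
Your iteration idea is exactly the paper's approach, but you are manufacturing a difficulty that is not there. After integrating in $\xi$ the intermediate amplitude
\[
    a_f(x,\eta)=\int e^{i\varphi_1(x,\xi)}\,a(x,\xi,\eta)\,\widehat f(\xi)\,\dd\xi
\]
is, for each fixed $f\in\S$, an ordinary \emph{scalar} function of $(x,\eta)$; there is no operator-valued symbol in sight. The only thing to check is that $a_f\in L^{r_2}S^{m_2}_{\varrho_2}$, and this follows from the first linear application: for each fixed $\eta$ and each multi-index $\alpha$, the amplitude $\partial^{\alpha}_{\eta}a(\cdot,\cdot,\eta)$ lies in $L^{p}S^{m_1}_{\varrho_1}$ with seminorms bounded by $c_{\alpha}\p{\eta}^{m_2-\varrho_2|\alpha|}$, so Theorem~\ref{thm:Linear} or Theorem~\ref{thm:p2} (applied with exponents $p,q_1,r_2$ and the hypothesis $m_1<\textarc{m}(\varrho_1,p,q_1)$) gives
\[
    \norm{\partial^{\alpha}_{\eta}a_f(\cdot,\eta)}_{L^{r_2}}\lesssim \p{\eta}^{m_2-\varrho_2|\alpha|}\norm{f}_{L^{q_1}}.
\]
That is precisely the statement $a_f\in L^{r_2}S^{m_2}_{\varrho_2}$ with $\abs{a_f}_{r_2,m_2,s}\lesssim\norm{f}_{L^{q_1}}$. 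Now $T_a(f,g)=T_{a_f}g$ is a linear FIO in $\eta$ with scalar amplitude $a_f$ and phase $\varphi_2$, and a second application of the \emph{already-proven scalar} Theorem~\ref{thm:Linear} or~\ref{thm:p2} (now with exponents $r_2,q_2,r$ and $m_2<\textarc{m}(\varrho_2,r_2,q_2)$, respectively the Lorentz part under $\textarc{m}(\varrho_2,r_2,q_2)\leq m_2<\mathcal M(\varrho_2,r_2,q_2)$) finishes the proof.

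In short: the ``main obstacle'' you anticipate --- a Banach-space-valued Hausdorff--Young or a vector-valued $TT^*$ --- never arises, because both applications of the linear theorem are to genuinely scalar amplitudes; the only trace of $f$ in the second step is the constant $\norm{f}_{L^{q_1}}$ sitting in front of the seminorms of $a_f$. Your attempted Minkowski step is indeed too crude, but the fix is not operator-valued analysis; it is simply to recognize $a_f$ as a new rough scalar amplitude and invoke the linear theorem as a black box.
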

\begin{proof}
For any $f,\, g\in \S$ set
$
    a_f\brkt{x,\eta}=\int e^{i\varphi_1(x,\xi)} a(x,\xi,\eta) \widehat{f}(\xi)\, \dd \xi.
$
Observe that the amplitude $\partial^{\alpha}_{\eta}a(\cdot,\xi,\eta)\in L^pS^{m_1}_{\varrho_1}$ if $\eta $ is hold fixed, and moreover for any $s\in \Z_+$,
\[
    \abs{\partial^{\alpha}_{\eta}a(\cdot,\cdot,\eta)}_{m_1,p,s}\leq c_{\alpha,s}\p{\eta}^{m_2-\varrho_2 \abs{\alpha}}.
\]
Thus, depending on the range of indices, we apply Theorem \ref{thm:Linear} or Theorem \ref{thm:p2} to obtain
\[
    \norm{\partial^{\alpha}_{\eta}a_f\brkt{\cdot,\eta}}_{L^{r_2}}\lesssim \p{\eta}^{m_2-\varrho_2 \abs{\alpha}} \norm{f}_{L^{q_1}},
\]
provided $m_1<\textarc{m}(\varrho_1,p,q_1)$. This means that $a_f\in L^{r_2}S^{m_2}_{\varrho_2}$ and for any $s\in \Z_+$,
\[
    \abs{a_f}_{r_2,m_2,s}\lesssim \norm{f}_{L^{q_1}}.
\]
Now applying either Theorem \ref{thm:Linear} or Theorem \ref{thm:p2} again, we obtain the desired result.
\end{proof}

\begin{cor}\label{cor: cor of the main bilinear}  Suppose that $0<r\leq \infty$, $1\leq q_1,q_2\leq \infty$ satisfy the relation $\frac{1}{r}=\frac{1}{q_1}+\frac{1}{q_2}$. Let $q_{\rm max}=\max(q_1,q_2)$ and $q_{\rm min}=\min(q_1,q_2)$.
Assume that  $\varphi_1,\varphi_2\in \Phi^2$ satisfy the SND condition and let $a\in L^\infty S^{m}_\varrho(n,2)$ with $0\leq \varrho\leq 1$ and 
\[
    m<\textarc{m}\brkt{\varrho,\infty,q_{\rm max}}+\textarc{m}\brkt{\varrho,q_{\rm max},q_{\rm min}}.
\]
Then the bilinear FIO $T_a$ defined by \eqref{Intro:bilinear_Fourier integral operator} satisfies
\[
    \norm{T_a(f,g)}_{L^r}\leq C_{a,n} \norm{f}_{L^{q_1}}\norm{g}_{L^{q_2}}, \quad \text{for any $f,\,g\in \S$.}
\]
Moreover, if $1\leq q_{\rm min}<2\leq q_{\rm max}$ and
\[
  \textarc{m}(\varrho,\infty,q_{\rm max})+ \textarc{m}(\varrho,q_{\rm max,}q_{\rm min})\leq m<\textarc{m}(\varrho,\infty,q_{\rm max})+\mathcal{M}(\varrho,q_{\rm max},q_{\rm min}),
\]
then
\[
    \norm{T_a(f,g)}_{L^{r,q_{\rm min}}}\leq C_{a,n} \norm{f}_{L^{q_1}}\norm{g}_{L^{q_2}}, \quad \text{for any $f,\,g\in \S$.}
\]
\end{cor}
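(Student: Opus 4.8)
The plan is to derive the corollary from Theorem~\ref{thm: bilinear product type operators}, exploiting the fact that an amplitude in the diagonal class $L^\infty S^m_\varrho(n,2)$ admits a product-type decomposition, as recorded in Example~\ref{interesting example}.

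\emph{Reduction to $q_1\geq q_2$.} The class $L^\infty S^m_\varrho(n,2)$ is invariant under the transposition of the two frequency variables, and the hypotheses on the phases (membership in $\Phi^2$ together with the SND condition) are symmetric in $\varphi_1,\varphi_2$. Since the bilinear FIO built from $\tilde a(x,\xi,\eta)=a(x,\eta,\xi)$ and the phases $(\varphi_2,\varphi_1)$ satisfies $T_{\tilde a}(g,f)=T_a(f,g)$, it suffices to prove the estimates when $q_1=q_{\rm max}$ and $q_2=q_{\rm min}$, which I assume from now on. Note that with $p=\infty$ the auxiliary exponent of Theorem~\ref{thm: bilinear product type operators} becomes $r_2=q_{\rm max}$, since $\tfrac1{r_2}=\tfrac1p+\tfrac1{q_1}$, and the requirement $q_1=\max(q_1,q_2)\geq p'$ reads $q_{\rm max}\geq1$, which is automatic.

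\emph{First assertion.} Because $m<\textarc{m}(\varrho,\infty,q_{\rm max})+\textarc{m}(\varrho,q_{\rm max},q_{\rm min})$, the interval $\bigl(m-\textarc{m}(\varrho,q_{\rm max},q_{\rm min}),\,\textarc{m}(\varrho,\infty,q_{\rm max})\bigr)$ is non-empty; I pick $m_1$ in it and set $m_2=m-m_1$, so that $m_1<\textarc{m}(\varrho,\infty,q_{\rm max})$ and $m_2<\textarc{m}(\varrho,q_{\rm max},q_{\rm min})$. By Example~\ref{interesting example}, $a\in L^\infty_\Pi S^{(m_1,m_2)}_{(\varrho,\varrho)}(n,2)$, and Theorem~\ref{thm: bilinear product type operators} applied with $p=\infty$ and $\varrho_1=\varrho_2=\varrho$ gives $\norm{T_a(f,g)}_{L^r}\lesssim\norm{f}_{L^{q_1}}\norm{g}_{L^{q_2}}$.

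\emph{Second assertion.} Here $1\leq q_{\rm min}<2\leq r_2=q_{\rm max}$, and the hypothesis $m<\textarc{m}(\varrho,\infty,q_{\rm max})+\mathcal{M}(\varrho,q_{\rm max},q_{\rm min})$ makes the interval $\bigl(m-\mathcal{M}(\varrho,q_{\rm max},q_{\rm min}),\,\textarc{m}(\varrho,\infty,q_{\rm max})\bigr)$ non-empty; again I pick $m_1$ in it and set $m_2=m-m_1$, so $m_1<\textarc{m}(\varrho,\infty,q_{\rm max})$ and $m_2<\mathcal{M}(\varrho,q_{\rm max},q_{\rm min})$. The lower bound $\textarc{m}(\varrho,\infty,q_{\rm max})+\textarc{m}(\varrho,q_{\rm max},q_{\rm min})\leq m$ forces $m_2=m-m_1>m-\textarc{m}(\varrho,\infty,q_{\rm max})\geq\textarc{m}(\varrho,q_{\rm max},q_{\rm min})$, whence $\textarc{m}(\varrho,q_{\rm max},q_{\rm min})\leq m_2<\mathcal{M}(\varrho,q_{\rm max},q_{\rm min})$. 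Since $a\in L^\infty_\Pi S^{(m_1,m_2)}_{(\varrho,\varrho)}(n,2)$ as before, the second part of Theorem~\ref{thm: bilinear product type operators} yields $\norm{T_a(f,g)}_{L^{r,q_{\rm min}}}\lesssim\norm{f}_{L^{q_1}}\norm{g}_{L^{q_2}}$. The argument involves no new analytic difficulty beyond Theorem~\ref{thm: bilinear product type operators}; the only delicate point is the bookkeeping of strict versus non-strict inequalities in the choice of $m_1,m_2$, so that all hypotheses of that theorem — in particular $m_2\geq\textarc{m}(\varrho_2,r_2,q_2)$ in the Lorentz regime — hold exactly.
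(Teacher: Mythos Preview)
Your argument is correct and is exactly the intended one: the corollary is an immediate consequence of Theorem~\ref{thm: bilinear product type operators} with $p=\infty$, once one invokes Example~\ref{interesting example} to pass from $L^\infty S^m_\varrho(n,2)$ to $L^\infty_\Pi S^{(m_1,m_2)}_{(\varrho,\varrho)}(n,2)$ for a suitable split $m=m_1+m_2$. The paper states the corollary without proof precisely because this reduction is routine; your careful tracking of the strict/non-strict inequalities (in particular ensuring $m_2\geq\textarc{m}(\varrho,q_{\rm max},q_{\rm min})$ in the Lorentz regime) is the only point requiring attention, and you handle it correctly.
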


\begin{remark}
In the case $\varrho=1,\, r=1$ this corollary yields a global bilinear $L^2\times L^2\to L^1$ extension of H\"ormander and Eskin's local $L^2$ boundedness of zeroth order linear FIOs (see  \cite{Eskin, Hor2}).

 L. Grafakos and M. Peloso \cite{GrafPelo} proved the local  $L^{1} \times L^\infty \to L^{1}$  and  $L^\infty\times L^{1} \to L^{1}$ boundedness of bilinear FIOs with non-degenerate phase functions of the form  $\phi(x,\xi, \eta)$  and amplitudes in $S^m_{1,0}(n,2)$ under the assumption $m<-n + \frac{1}{2}.$ Our Corollary \ref{cor: cor of the main bilinear} yields the global  $L^{p} \times L^\infty \to L^{p}$  and  $L^\infty\times L^{p} \to L^{p}$  ($1\leq p\leq\infty$) boundedness of bilinear FIOs whose phase function is of the form $\varphi_1(x,\xi)+ \varphi_2(x,\eta)$,  provided that $m< -(n-1) (\frac{1}{2}+|\frac{1}{p} -\frac{1}{2}|)$.
\end{remark}

Here we would like to mention how Theorem \ref{thm:p2} above can be used to extend a couple of known results in the literature. In \cite{GrafPelo}  Grafakos and Peloso showed the following theorem:
\begin{thm}\label{thm: grafakos-peloso}
Let $a(x, \xi,\eta) \in S^{m}_{1,0} (n,2)$ with compact support in the spatial variable $x$. The the corresponding bilinear FIO defined by \eqref{eq:definition_FIO} with phase functions $\varphi_1(x,\xi)$ and $\varphi_2(x,\eta)$  satisfying the non-degeneracy condition,  is bounded from $L^{q_1}\times L^{q_2} \to L^{r}$ with $\frac{1}{q_1} +\frac{1}{q_2} =\frac{1}{r}$ and $1\leq q_{1}, \, q_{2} \leq 2,$ provided that the order $m<-(n-1)\left((\frac{1}{q_1}-\frac{1}{2})+(\frac{1}{q_2}-\frac{1}{2})\right).$
\end{thm}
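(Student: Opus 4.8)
The plan is to deduce Theorem~\ref{thm: grafakos-peloso} from the linear theory of Section~\ref{linear FIOs} by a separation of the two frequency variables. Since $a\in S^{m}_{1,0}(n,2)$ is compactly supported in $x$, Example~\ref{interesting example} puts it in $L^{\infty}S^{m}_{1}(n,2)$, and Remark~\ref{rem:local} then lets one replace the SND hypothesis by the mere non-degeneracy of $\varphi_1,\varphi_2$ at the cost of that compact support. For $1\le q_1,q_2\le2$ one has $\textarc{m}(1,\infty,q_\ell)=-(n-1)\bigl(\tfrac{1}{q_\ell}-\tfrac12\bigr)=-(n-1)\bigl|\tfrac{1}{q_\ell}-\tfrac12\bigr|$, so in particular $m<0$; thus $m$ may be written $m=m_1+m_2$, and the hypothesis $m<-(n-1)\bigl(\bigl|\tfrac1{q_1}-\tfrac12\bigr|+\bigl|\tfrac1{q_2}-\tfrac12\bigr|\bigr)$ is precisely what lets one choose the split so that $m_1<\textarc{m}(1,\infty,q_1)$ and $m_2<\textarc{m}(1,\infty,q_2)$ at the same time (note $m_j<0$ automatically). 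The naive route --- freezing $g$ and iterating as in Theorem~\ref{thm: bilinear product type operators} --- does \emph{not} achieve this, since it leaves a merely $L^{q_2}$-rough amplitude in the second variable, governed by the lossier Theorem~\ref{thm:Linear}; separating variables keeps both factors at the $L^\infty$ level.

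Decompose $a=\sum_{j,k\ge0}a_{j,k}$ by a Littlewood--Paley partition in $\xi$ and one in $\eta$. The blocks with $j=0$ or $k=0$ have a bounded frequency variable and are the easy ones, handled by Theorem~\ref{Linearlow} in that variable together with the single-shell estimate in the other. For a generic block $j,k\ge1$, rescale $\xi\mapsto2^j\xi$, $\eta\mapsto2^k\eta$ so that both variables sit in a fixed annulus; the rescaled amplitude is then uniformly (in $j,k$) smooth there with $C^\infty$-norms of order $2^{jm_1}2^{km_2}$, by the product embedding $\langle\xi\rangle^{m_1}\langle\eta\rangle^{m_2}\gtrsim(1+|\xi|+|\eta|)^m$ underlying Example~\ref{interesting example}. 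Expanding it in a Fourier series in the two frequency variables --- exactly as in the proof of Theorem~\ref{Linearlow} --- yields $a_{j,k}(x,\xi,\eta)=\sum_{\mu,\nu}c^{j,k}_{\mu\nu}(x)\,\Phi^j_\mu(\xi)\,\Theta^k_\nu(\eta)$, where $\sum_{\mu,\nu}\|c^{j,k}_{\mu\nu}\|_{L^\infty}\lesssim1$ uniformly in $j,k$ (rapid decay of the coefficients, from the uniform smoothness), $\Phi^j_\mu$ is a modulated bump of size $2^{jm_1}$ supported where $|\xi|\sim2^j$, and $\Theta^k_\nu$ is the analogue in $\eta$; the modulations $e^{i2^{-j}\mu\cdot\xi}$ and $e^{i2^{-k}\nu\cdot\eta}$ become spatial translations of $f$ and $g$.

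Consequently $T_{a_{j,k}}(f,g)(x)=\sum_{\mu,\nu}c^{j,k}_{\mu\nu}(x)\,\bigl(T^{(1)}_{\Phi^j_\mu}f_\mu\bigr)(x)\,\bigl(T^{(2)}_{\Theta^k_\nu}g_\nu\bigr)(x)$, with $f_\mu,g_\nu$ translates of $f,g$, where each $T^{(\ell)}$ is a single-dyadic-shell linear FIO with a compactly supported $L^\infty$ amplitude and a non-degenerate phase. For such an operator the estimates underlying Theorem~\ref{thm:p2} --- the $TT^{\ast}$ bound of Theorem~\ref{thmL2} (via Lemma~\ref{lem:technic}) interpolated against the extremal bound of Theorem~\ref{thm:Linear} --- give $\|T^{(1)}_{\Phi^j_\mu}f_\mu\|_{L^{q_1}}\lesssim 2^{j(m_1+(n-1)(1/q_1-1/2))}\|f\|_{L^{q_1}}$, uniformly in $\mu$, and likewise in $\eta$. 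Hence, by H\"older in $x$ with $\tfrac1r=\tfrac1{q_1}+\tfrac1{q_2}$ and the uniform $\ell^1$-bound on the $c^{j,k}_{\mu\nu}$, one gets $\|T_{a_{j,k}}(f,g)\|_{L^r}\lesssim 2^{j(m_1+(n-1)(1/q_1-1/2))}\,2^{k(m_2+(n-1)(1/q_2-1/2))}\,\|f\|_{L^{q_1}}\|g\|_{L^{q_2}}$, and summing over $j,k$ (in $\ell^{R}$, $R=\min(r,1)$) converges exactly when $m_1<-(n-1)\bigl|\tfrac1{q_1}-\tfrac12\bigr|$ and $m_2<-(n-1)\bigl|\tfrac1{q_2}-\tfrac12\bigr|$, i.e. under the stated hypothesis on $m$.

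The main obstacle --- and the heart of the argument --- is the Fourier-series step: one must check that after the parabolic rescaling the amplitude $a\in S^m_{1,0}(n,2)$ is genuinely uniformly $C^\infty$ on the fixed annulus, so that the coefficients $c^{j,k}_{\mu\nu}(x)$ are summable in $(\mu,\nu)$ uniformly in $(j,k)$, and that the joint decay $(1+|\xi|+|\eta|)^m$ splits as $\langle\xi\rangle^{m_1}\langle\eta\rangle^{m_2}$ compatibly with all derivatives --- this last being exactly the product embedding of Example~\ref{interesting example}. It is here, and only here, that the honest smoothness of $a$ (rather than mere $L^\infty$-roughness) is used: it is what decouples the bilinear operator into products of single-variable linear FIOs with true $L^\infty$ amplitudes, so that the two sharp Seeger--Sogge--Stein exponents \emph{add} instead of interacting. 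The iteration of the first paragraph fails precisely because it does not decouple: integrating one frequency variable against a frozen input yields a merely $L^{q_2}$-rough amplitude in the other, for which only the weaker Theorem~\ref{thm:Linear} is available, and the two losses then do not sum to the sharp Grafakos--Peloso value \cite{GrafPelo}.
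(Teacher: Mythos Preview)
The paper does not prove Theorem~\ref{thm: grafakos-peloso}; it is quoted from Grafakos--Peloso \cite{GrafPelo} as a known result, to be compared with the paper's own extensions. The closest thing to ``the paper's proof'' is Theorem~\ref{Thm:generalmultilinear} together with its corollary, which recovers (a global version of) the statement as a special case.

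Your argument is correct and is a genuine variant of that approach. Both routes separate the bilinear operator into products of linear FIOs via Fourier series expansion of a rescaled amplitude, then feed each factor into the linear theory of Section~\ref{linear FIOs}. The differences are: the paper uses a \emph{joint} Littlewood--Paley decomposition in $(\xi,\eta)\in\R^{2n}$, splits the Fourier coefficient as $|a^{j}_{k,l}|^{m_1/(m_1+m_2)}\cdot|a^{j}_{k,l}|^{m_2/(m_1+m_2)}$, and then controls the single sum over $j$ via Khinchine's inequality and a randomized amplitude $\sigma^{\ell}_{k,l}(t,x,\xi)=\sum_j \varepsilon_j(t)\theta^{j,\ell}_{k,l}$; you use \emph{separate} Littlewood--Paley decompositions in $\xi$ and $\eta$ and sum the resulting double series in $(j,k)$ directly. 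Your route avoids the randomization entirely, at the cost of having to verify that on the block $|\xi|\sim 2^j$, $|\eta|\sim 2^k$ the rescaled amplitude has $C^\infty$ bounds of size $2^{jm_1}2^{km_2}$ uniformly in $(j,k)$ --- which you correctly identify as the content of Example~\ref{interesting example} (and which indeed holds for all derivatives, since $m_1,m_2<0$). The paper's route, in turn, handles $p<\infty$ and arbitrary $q_j$ in one stroke.

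One point to tighten: the single-shell bound $\|T^{(1)}_{\Phi^j_\mu}f_\mu\|_{L^{q_1}}\lesssim 2^{j(m_1+(n-1)(1/q_1-1/2))}\|f\|_{L^{q_1}}$ is only ``uniform in $\mu$'' if you have genuinely moved the modulation $e^{i2^{-j}\langle\mu,\xi\rangle}$ onto the function side as a translation, so that the remaining amplitude is $\mu$-independent; your text does say this, but also calls $\Phi^j_\mu$ a ``modulated bump'', which is inconsistent. If instead you keep the modulation in the amplitude (as the paper does in Theorem~\ref{Thm:generalmultilinear}), the seminorms grow like $(1+|\mu|)^{M_1}$ for some $M_1$, and one must absorb this polynomial loss into the rapid decay of the coefficients $c^{j,k}_{\mu\nu}$ by taking the Fourier-series order $M$ large enough --- exactly as in the paper's proof. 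Either way works; just commit to one.
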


Furthermore, F. Bernicot  \cite{Bernicot} obtained the following sharp proposition concerning bilinear multipliers (which are in turn a subclass of bilinear pseudodifferential operators):

\begin{prop} \label{prop: bernicot}
$\sigma$ be a multiplier in $\sigma\in S^{(0,0)}_{(1,1)}(n,2)$, i.e. a bounded function on $\mathbb{R}^2$ such that
$$\forall \alpha ,\beta \geq 0\qquad \left|\partial^{\alpha}_{\xi} \partial^{\beta}_{\eta} \sigma(\xi,\eta)\right| \lesssim \left(1+|\xi|\right)^{-|\alpha|}\left(1+|\eta|\right)^{-|\beta|}.$$
Let $1<p,q \leq \infty$ be exponents such that $0<\frac{1}{r}=\frac{1}{q_1}+\frac{1}{q_2}.$
Then, for any $s>0$, the associated bilinear pseudodifferential operator $T_\sigma$ is continuous from $H^{s,q_1} \times H^{s,q_2}$ to $L^r$.
\end{prop}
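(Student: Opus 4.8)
The plan is to peel off the Bessel potentials, reduce the assertion to an $L^{q_1}\times L^{q_2}\to L^r$ estimate for a bilinear $\Psi$DO whose symbol gains $s$ derivatives in each frequency variable, and then run the iteration from the proof of Theorem~\ref{thm: bilinear product type operators}, but feeding it the rough \emph{linear} $\Psi$DO estimate of Theorem~\ref{thm:LinearPDO} rather than the FIO estimates of Theorems~\ref{thm:Linear}--\ref{thm:p2}; the latter carry a $\tfrac{n-1}{2}$-type loss and would only reach $s$ large, whereas Theorem~\ref{thm:LinearPDO} has order threshold exactly $0$ when $\varrho=1$, which is what gives \emph{all} $s>0$. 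Concretely, I would set $F=\p{D}^{s}f$ and $G=\p{D}^{s}g$; since $\p{D}^{-s}$ is an isomorphism of $H^{s,q}$ onto $L^q$, this gives $\norm{F}_{L^{q_1}}=\norm{f}_{H^{s,q_1}}$, $\norm{G}_{L^{q_2}}=\norm{g}_{H^{s,q_2}}$, and
\[
T_{\sigma}(f,g)(x)=\iint \widetilde{\sigma}(\xi,\eta)\,e^{i\pr{x}{\xi+\eta}}\,\widehat{F}(\xi)\,\widehat{G}(\eta)\,\dd\xi\,\dd\eta,\qquad \widetilde{\sigma}(\xi,\eta):=\sigma(\xi,\eta)\,\p{\xi}^{-s}\p{\eta}^{-s}.
\]
By Leibniz' rule and the hypothesis on $\sigma$ one has $\abs{\d_{\xi}^{\alpha}\d_{\eta}^{\beta}\widetilde{\sigma}(\xi,\eta)}\lesssim\p{\xi}^{-s-\abs{\alpha}}\p{\eta}^{-s-\abs{\beta}}$, i.e. $\widetilde{\sigma}\in S^{(-s,-s)}_{(1,1)}(n,2)\subset L^{\infty}_{\Pi}S^{(-s,-s)}_{(1,1)}(n,2)$; so it suffices to prove $\norm{T_{\sigma}(f,g)}_{L^{r}}\lesssim\norm{F}_{L^{q_1}}\norm{G}_{L^{q_2}}$.

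Next I would iterate in the frequency variables. For $\xi$ fixed, $\eta\mapsto\d_{\xi}^{\alpha}\widetilde{\sigma}(\xi,\eta)$ belongs to $\p{\xi}^{-s-\abs{\alpha}}S^{-s}_{1,0}(\R^{n}_{\eta})$ uniformly in $\xi$, and because $s>0$ the corresponding Fourier multipliers are bounded on $L^{q_2}(\R^{n})$ for \emph{every} $1\le q_2\le\infty$ (for $1<q_2<\infty$ this already holds at order $0$ by Mihlin--Hörmander; the endpoints $q_2\in\{1,\infty\}$ are where the gain $s>0$ is needed, the convolution kernel then being dominated by $\abs{z}^{-(n-s)}$ near the origin and rapidly decaying at infinity, so Schur's lemma applies). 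Hence the symbol $b_{G}(x,\xi):=\int e^{i\pr{x}{\eta}}\widetilde{\sigma}(\xi,\eta)\widehat{G}(\eta)\,\dd\eta$ satisfies $\norm{\d_{\xi}^{\alpha}b_{G}(\cdot,\xi)}_{L^{q_2}}\lesssim\p{\xi}^{-s-\abs{\alpha}}\norm{G}_{L^{q_2}}$, that is $b_{G}\in L^{q_2}S^{-s}_{1}$ with $\abs{b_{G}}_{q_2,-s,N}\lesssim\norm{G}_{L^{q_2}}$ for all $N$. By Fubini $T_{\sigma}(f,g)=T_{b_{G}}F$ with $T_{b_{G}}F(x)=\int e^{i\pr{x}{\xi}}b_{G}(x,\xi)\widehat{F}(\xi)\,\dd\xi$, and I would then invoke Theorem~\ref{thm:LinearPDO} with $p=q_2$, $q=q_1$, $m=-s$, $\varrho=1$: the exponents obey $\tfrac1r=\tfrac1{q_1}+\tfrac1{q_2}$ and the order condition $m<\tfrac{n(\varrho-1)}{\min(2,p,q)}$ reads $-s<0$. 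This yields $\norm{T_{\sigma}(f,g)}_{L^{r}}=\norm{T_{b_{G}}F}_{L^{r}}\lesssim\abs{b_{G}}_{q_2,-s,N}\norm{F}_{L^{q_1}}\lesssim\norm{G}_{L^{q_2}}\norm{F}_{L^{q_1}}=\norm{g}_{H^{s,q_2}}\norm{f}_{H^{s,q_1}}$, which is the desired bound (first for $F,G\in\S$, then for all of $H^{s,q_1}\times H^{s,q_2}$ by a routine limiting argument).

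The main obstacle is conceptual rather than computational: one cannot apply Theorem~\ref{thm:LinearPDO} until the intermediate symbol has negative $\xi$-order, so the gain $s>0$ must be spent before the iteration starts, and a secondary point is the uniform-in-$\xi$ $L^{q_2}$-boundedness of the auxiliary order $-s$ multipliers at the endpoints $q_2\in\{1,\infty\}$ (again consumed by $s>0$). Everything else --- Leibniz' rule for $\widetilde{\sigma}$, the mapping properties of $\p{D}^{-s}$, and the two applications of Fubini rewriting $T_{\sigma}$ as an iterated $\Psi$DO --- is routine.
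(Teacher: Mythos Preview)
Your argument is correct. The key reduction --- absorbing the Bessel potentials into the symbol so that $\widetilde{\sigma}\in L^{\infty}_{\Pi}S^{(-s,-s)}_{(1,1)}(n,2)$ --- is exactly the move the paper makes in the corollary immediately following Theorem~\ref{Thm:generalmultilinear} (where one observes $a\prod_{j}\p{\xi_j}^{-s_j}\in L^{p}_{\Pi}S^{(m_1-s_1,\dots,m_N-s_N)}_{(1,\dots,1)}$ and then applies that theorem). So at the level of strategy the two proofs agree.

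Where you diverge is in the engine used after the reduction. The paper's route invokes Theorem~\ref{Thm:generalmultilinear}, whose proof goes through a Littlewood--Paley decomposition, a Fourier-series expansion of each dyadic amplitude, Khinchine's inequality, and finally the linear FIO estimate of Theorem~\ref{thm:p2}. You instead run the bare iteration of Theorem~\ref{thm: bilinear product type operators}: freeze $\xi$, use that an $S^{-s}_{1,0}$ multiplier in $\eta$ is bounded on $L^{q_2}$ (uniformly in $\xi$) to place $b_G\in L^{q_2}S^{-s}_{1}$, and finish with the rough linear $\Psi$DO estimate of Theorem~\ref{thm:LinearPDO}, whose order threshold collapses to $m<0$ when $\varrho=1$. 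This is more economical for the $\Psi$DO case and makes transparent why \emph{any} $s>0$ suffices: the $(n-1)$-loss built into $\textarc{m}$ never enters because you avoid the FIO machinery altogether. The paper's route, by contrast, is phrased for genuine FIOs and carries that loss, so its corollary literally generalizes Bernicot's statement only to the FIO setting rather than re-proving it sharply for multipliers. Your endpoint remark ($q_2\in\{1,\infty\}$ handled by the $L^1$ kernel bound coming from $s>0$) is a nice bonus, though the proposition as stated only asks for $q_1,q_2>1$.
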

Here, the Sobolev space $H^{s,p}$ is the space of tempered distributions $f$ such that $(1-\Delta)^{\frac{s}{2}}f (x)$ belongs to $L^p$.

 The following theorem yields the sharp boundedness of a rather large class of rough multilinear Fourier integral operators on $L^{r}$ spaces for $0<r\leq \infty$. Apart from the global multilinear generalization of Theorem \ref {thm: grafakos-peloso} above, it also extents it to the class of rough symbols with product type structure and all ranges of $q_{1}$'s  and $q_{2}$'s.

Moreover, our result also yields a generalization of Proposition \ref{prop: bernicot} to the case of rough multilinear FIOs.   In the case of operators defined with phase functions that are inhomogeneous in the $\xi$-variable, we are also able to show a boundedness result in case the multilinear operator acts on $L^2$ functions.

\begin{thm}\label{Thm:generalmultilinear} Let $1\leq p\leq \infty$, $m_{j}<0,$ $j= 1,\dots N,$ and suppose that $\frac{\sum_{j=1}^{N} m_{j}}{\min_{j=1,\dots,N}m_{j}}\geq \frac{2}{p}$. Assume that the amplitude $a(x,\xi_{1},\dots, \xi_{N})\in L^{p}_{\Pi} S^{(m_1, \dots, m_N)}_{(1,\dots, 1)}(n,N)$ and the phase functions $\varphi_j\in \Phi^2,$ $j=1, \dots, N,$ satisfy the SND condition and belong to the class $\Phi^2.$

For $1\leq q_j < \infty$ in case $p=\infty,$ and $1\leq q_j \leq \infty$ in case $p\neq \infty,$ $j=1,\dots,N,$ let
\[
    \frac{1}{r}=\frac{1}{p}+\sum_{j=1}^{N}\frac{1}{q_j}.
\]
Then the multilinear FIO
\begin{equation}\label{eq:multilinearFIO}
T_a(f_1, \dots, f_N) (x)= \int_{\R^{Nn}} a(x,\xi_1, \dots, \xi_N)\, e^{ i\sum_{j=1}^{N} \varphi_{j}(x, \xi_{j})} \, \prod_{j=1}^{N}\hat{f}(\xi_j)\,\dd \xi_1\dots \dd \xi_N,
\end{equation}
satisfies the estimate
\[
    \norm{T_a(f_1,\dots, f_N)}_{L^r}\leq C_{a,n} \norm{f_1}_{L^{q_1}}\dots\norm{f_N}_{L^{q_N}}, 
\]
provided that
\[
    m_j<\textarc{m}(1,\frac{p(\sum_{k=1}^{N}m_k)}{m_j},q_j),\quad {\rm for}\, j=1, \dots , N.
\]

Furthermore, $T_{a}$ with $a\in L^\infty S^m_{1}(n,N)$ is bounded from $L^2 \times \dots\times L^2 \to L^{\frac{2}{N}}$ provided that $m<0$ and the phases $\varphi_{j}\in C^{\infty}(\R^n \times \R^n)$ satisfy the SND condition and $|\partial_{x}^{\alpha} \partial^{\beta}_{\xi} \varphi_{j} (x, \xi)|\leq C_{j,\alpha,\beta}$ for $j=1,\dots, N$ and all multi-indices $\alpha $ and $\beta$ with $2\leq |\alpha|+|\beta|.$ Note in this case, we do not require any homogeneity from the phase functions.
\begin{proof}
We will only give the proof of the theorem in the case of bilinear operators, since using the well-known inequality
$\sum_{j\geq 0}\prod_{1\leq k\leq N} |a_{j,k}| \leq \prod_{1\leq k\leq N} \brkt{\sum_{j\geq 0} |a_{j,k}|^2}^{\frac{1}{2}}$
and the H\"older inequality in \eqref{eq:bilinear} below yield the result in the multilinear case.

Let us start by clarifying the sharpness of the theorem in the case $p=\infty$. The reason behind this claim is a result of L. Grafakos and N. Kalton \cite {GrafKal}, which states that there are bilinear symbols in the class  $S^{(0, 0)}_{(1, 1)}(n,2)$ for which the associated bilinear pseudodifferential operator is unbounded. Since bilinear $\Psi$DOs are just a special case of bilinear FIOs, and the class $S^{(0, 0)}_{(1, 1)}(n,2)$  is a subclass of  $L^\infty_\Pi S^{(0, 0)}_{(1,1)}(n,2)$, we can deduce the desired sharpness.

Now let $\{\Psi_j\}_{j\geq 0}$ a Littlewood-Paley partition of unity in $\R^{2n}$ as in \eqref{eq:LittlewoodPaley}. Let for $j\geq 0$, $a_j(x,\xi, \eta)=a(x,\xi, \eta)\Psi_j(\xi, \eta)$.
Then we have
\[
    T_a(f,g)(x)=\sum_{j\geq 0} 2^{2jn} \iint a_{j}(x,2^{j}\xi, 2^{j} \eta) \widehat{f}(2^{j}\xi)\widehat{g}(2^{j}\eta)e^{i\varphi_{1}(x,2^{j}\xi)+i\varphi_{2}(x,2^{j}\eta)}\, \dd \xi \, \dd \eta.
\]
 Now since for any $j\geq 0$, $\Psi_j(2^j\xi, 2^{j} \eta)$ is supported in $B(0,2)\subset \T^{2n}$, following the argument in Theorem \ref{Linearlow} and expanding the amplitudes in Fourier series, we obtain
\[
    a_{j}(x,2^{j}\xi, 2^{j} \eta)=\sum_{(k,l)\in \Z^{2n}} a_{k,l}^j(x) e^{i\p{k,\xi}+i\p{l,\eta}}.
\]
Moreover, for any natural number $M\geq 1$, we have
\[
    \abs{a_{k,l}^j(x)}\lesssim \frac{1}{1+\abs{(k,l)}^M} \sum_{\alpha_1 + \alpha_{2}=\alpha;\,\abs{\,\alpha}\leq M}\int_{\T^{2n}} \abs{\partial^{\alpha_1}_\xi \partial^{\alpha_2}_\eta \brkt{a_{j}(x,2^j\xi, 2^{j} \eta)}}\, \dd \xi\, \dd \eta,
\]
and
\begin{equation}
\begin{split}
    \Vert a_{k,l}^j\Vert_{L^{p}}&\leq  \frac{1}{1+\abs{(k,l)}^M}  \sum_{\alpha_1 + \alpha_{2}=\alpha;\,\abs{\,\alpha}\leq M}\int_{\T^{2n}} \Vert \partial^{\alpha_1}_\xi \partial^{\alpha_2}_\eta a_{j}(x,2^j\xi, 2^{j} \eta)\Vert_{L^{p}_{x}}\, \dd \xi\, \dd \eta\\&\lesssim \frac{2^{j(m_1+m_2)}}{1+\abs{(k,l)}^M}.
\end{split}
\end{equation}
We now take a $\zeta\in C_{0}^{\infty}(\R^{n})$ equal to one in the cube $[-3,3]^n$ and such that $\supp \zeta\subset \T^{n}.$ This yields
\[
    \begin{split}
    &T_a(f,g)(x)\\&=\sum_{(k,l)\in \Z^{2n}} \sum_{j\geq 0} a_{k,l}^j(x)\iint\zeta(2^{-j}\xi)\zeta(2^{-j}\eta) e^{i\p{2^{-j}k,\xi}}e^{i\p{2^{-j}l,\eta}}\widehat{f}(\xi)\widehat{g}(\eta)e^{i\varphi_{1}(x,\xi)+i\varphi_{2}(x,\eta)}\, \dd \xi\, \dd \eta\\
    &=\sum_{(k,l)\in \Z^{2n}} \sum_{j\geq 0} {\rm sgn\,}\brkt{a_{k,l}^{j}(x)}T_{\theta^{j,1}_{k,l},\varphi_1}(f)(x)T_{\theta^{j,2}_{k,l},\varphi_2}(g)(x),
    \end{split}
\]
where  ${\rm sgn\,}z={z}/{\abs{z}}$ if $z\neq 0$ and zero elsewhere, $\theta^{j,1}_{k,l}(x,\xi)=\abs{a_{k,l}^j(x)}^{\frac{m_1}{m_1+m_2}}\zeta(2^{-j}\xi)e^{i\p{2^{-j}k,\xi}},$  $\theta^{j,2}_{k,l}(x,\eta)=\abs{a_{k,l}^j(x)}^{\frac{m_2}{m_1+m_2}}\zeta(2^{-j}\eta)e^{i\p{2^{-j}l,\eta}}$, and $T_{\theta^{j,1}_{k,l},\varphi_1}$ (resp. $T_{\theta^{j,2}_{k,l},\varphi_2}$) stands for the FIO with amplitude $\theta^{j,1}_{k,l}$ (resp. $\theta^{j,2}_{k,l}$) and phase function $\varphi_1$ (resp. $\varphi_2$). 
If we let $R=\min (1,r),$ then the Cauchy-Schwarz and the H\"older inequalities yield
\begin{equation}\label{eq:bilinear}
    \norm{T_a(f,g)}_{L^r}^{R}\leq \sum_{(k,l)\in \Z^{2n}}
    \norm{\brkt{\sum_{j\geq 0} \abs{T_{\theta^{j,1}_{k,l},\varphi_1}(f)(x)}^2}^{\frac{1}{2}} }_{L^{r_1}}^{R}
    \norm{\brkt{\sum_{j\geq 0} \abs{T_{\theta^{j,2}_{k,l},\varphi_2}(g)(x)}^2}^{\frac{1}{2}}}_{L^{r_2}}^{R},
\end{equation}
where $\frac{1}{r}=\frac{1}{r_1}+\frac{1}{r_2}$, and
\[
    \frac{1}{r_1}=\frac{1}{q_1}+\frac{m_1}{p(m_1+m_2)},\qquad  \frac{1}{r_2}=\frac{1}{q_2}+\frac{m_2}{p(m_1+m_2)}.
\]
Khinchine's inequality yields
\[
     \norm{\brkt{\sum_{j\geq 0} \abs{T_{\theta^{j,1}_{k,l},\varphi_1}(f)(x)}^2}^{\frac{1}{2}} }_{L^{r_1}(\R^n)}\lesssim
     \norm{\sum_{j\geq 0 } \varepsilon_j (t) T_{\theta^{j,1}_{k,l},\varphi_1}(f)(x)}_{L_{x,t}^{r_1}(\R^n\times [0,1])},
\]
where $\{\varepsilon_j (t)\}_j$ are the Rademacher functions.
Observe that the inner term is a linear FIO with the phase function $\varphi_1$ and the amplitude
\[
    \sigma^1_{k,l}(t,x,\xi)=\sum_{j\geq 0 } \varepsilon_{j}(t)\theta^{j,1}_{k,l}(x,\xi),\quad t\in [0,1],\, x,\xi\in \R^n.
\]
Picking $s_1,s_2$ such that $m_i<s_i<\textarc{m}(1,\frac{p(m_1+m_2)}{m_i},q_i)$, for $i=1,2,$ then since $\supp \zeta\subset B(0,4\sqrt{n})$, one can see that for any multi-index $\alpha$
\[
    \abs{\d^\alpha_{\xi}\brkt{\varepsilon_j (t)\zeta(2^{-j}\xi)e^{i\p{2^{-j}k,\xi}}}}\lesssim \p{\xi}^{s_1-\abs{\alpha}}\brkt{1+\abs{k}^{\abs{\alpha}}}2^{-js_1},
\]
with a constant which is uniform in $j$ and $t$. In particular, $ \sigma^1_{k,l}\in L^{\frac{p(m_1+m_2)}{m_1}} S^{s_1}_1$ and
\[
    \norm{\d^\alpha_{\xi} \sigma^1_{k,l}(t,x,\xi)}_{L^{\frac{p(m_1+m_2)}{m_1}}}\lesssim \frac{\p{\xi}^{s_1-\abs{\alpha}}\brkt{1+\abs{k}^{\abs{\alpha}}}}{\brkt{1+\abs{k}^M}^{\frac{m_1}{m_1+m_2}}}.
\]
By the hypotheses on $m_1,m_2,$ we have that $\frac{p(m_1+m_2)}{m_1}\geq 2$, and therefore Theorem \ref{thm:p2} yields
\[
    \norm{\sum_{j\geq 0 } \varepsilon_j (t) T_{\theta^{j,1}_{k,l},\varphi_1}(f)(x)}_{L^{r_1}(\R^n\times [0,1])}\lesssim \frac{1+\abs{k}^{M_1}}{\brkt{1+\abs{k}^M}^{\frac{m_1}{m_1+m_2}}} \norm{f}_{L^{q_1}},
\]
for a certain natural number $M_1.$
Arguing in the same way with the second term of \eqref{eq:bilinear} we have
\[
    \norm{T_a(f,g)}_{L^r}^R\leq \sum_{(k,l)\in \Z^{2n}} \brkt{\frac{\brkt{1+\abs{k}^{M_1}}\brkt{1+\abs{l}^{M_2}}}{{1+\abs{(k,l)}^M}}}^R \norm{f}_{L^{q_1}}^R\norm{g}_{L^{q_2}}^R.
\]
Therefore by choosing $M$ large enough, we obtain the desired boundedness result.

The last assertion is a direct consequence of the method of proof of the first claim, and the $L^2$ boundedness of oscillatory integral operators with amplitudes in $S^{0}_{0,0}$ and strongly non-degenerate inhomogeneous phase functions satisfying the hypotheses of our theorem, which is due to K. Asada and D. Fujiwara \cite{AsadFuji}. The proof of the theorem is therefore concluded.
\end{proof}
\end{thm}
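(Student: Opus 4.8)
The plan is to reduce immediately to the bilinear case $N=2$, since the pointwise inequality $\sum_{j\ge 0}\prod_{k=1}^{N}|a_{j,k}|\le\prod_{k=1}^{N}\big(\sum_{j\ge 0}|a_{j,k}|^{2}\big)^{1/2}$ together with H\"older's inequality upgrades a bilinear bound to the multilinear one. So fix $a\in L^p_\Pi S^{(m_1,m_2)}_{(1,1)}(n,2)$ and phases $\varphi_1,\varphi_2\in\Phi^2$ satisfying the SND condition. First I would take a Littlewood--Paley decomposition of $a$ jointly in the variables $(\xi,\eta)$, writing $a=\sum_{j\ge0}a_j$ with $a_j=a\,\Psi_j$, and rescale the $j$-th block so that $(\xi,\eta)\mapsto a_j(x,2^{j}\xi,2^{j}\eta)$ is supported in a fixed ball of $\T^{2n}$. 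Expanding this rescaled block in a Fourier series over the torus gives coefficients $a^{j}_{k,l}(x)$, and repeated integration by parts in the frequency variables combined with the defining estimates of $L^p_\Pi S^{(m_1,m_2)}_{(1,1)}$ yields the key bound $\|a^{j}_{k,l}\|_{L^p}\lesssim 2^{j(m_1+m_2)}(1+|(k,l)|)^{-M}$ for every $M$. Inserting a smooth cutoff $\zeta$ adapted to the frequency supports, the operator $T_a(f,g)$ then splits as a double sum over $(k,l)\in\Z^{2n}$ and $j\ge0$ of products of linear FIOs with phases $\varphi_1$ and $\varphi_2$, where the scalar weight $|a^{j}_{k,l}(x)|$ is distributed multiplicatively between the two factors as $|a^{j}_{k,l}(x)|^{m_1/(m_1+m_2)}\cdot|a^{j}_{k,l}(x)|^{m_2/(m_1+m_2)}$, the sign being carried separately.

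With $R=\min(1,r)$, I would apply the Cauchy--Schwarz inequality in $j$ and then H\"older's inequality in $x$ to bound $\|T_a(f,g)\|_{L^r}^{R}$ by $\sum_{(k,l)}\|(\sum_j|T_{\theta^{j,1}_{k,l},\varphi_1}f|^{2})^{1/2}\|_{L^{r_1}}^{R}\,\|(\sum_j|T_{\theta^{j,2}_{k,l},\varphi_2}g|^{2})^{1/2}\|_{L^{r_2}}^{R}$, where $\tfrac1{r_1}=\tfrac1{q_1}+\tfrac{m_1}{p(m_1+m_2)}$ and $\tfrac1{r_2}=\tfrac1{q_2}+\tfrac{m_2}{p(m_1+m_2)}$, so that $\tfrac1{r_1}+\tfrac1{r_2}=\tfrac1r$. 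Khinchine's inequality replaces each square function by a Rademacher average $\|\sum_j\varepsilon_j(t)\,T_{\theta^{j,i}_{k,l},\varphi_i}(\cdot)\|_{L^{r_i}_{x,t}}$, and this average is itself a linear FIO with phase $\varphi_i$ and amplitude $\sigma^{i}_{k,l}(t,x,\xi)=\sum_j\varepsilon_j(t)\,\theta^{j,i}_{k,l}(x,\xi)$. Differentiating in $\xi$ and exploiting the support of $\zeta$ together with the coefficient bound above shows that, for any $s_i$ with $m_i<s_i<\textarc{m}(1,\tfrac{p(m_1+m_2)}{m_i},q_i)$, one has $\sigma^{i}_{k,l}(t,\cdot,\cdot)\in L^{\,p(m_1+m_2)/m_i}S^{s_i}_1$ uniformly in $t$, with seminorms controlled by a constant times $(1+|k|^{M_i})\,(1+|(k,l)|^{M})^{-m_i/(m_1+m_2)}$.

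The arithmetic hypothesis $\tfrac{m_1+m_2}{\min(m_1,m_2)}\ge\tfrac2p$ is exactly what forces $p(m_1+m_2)/m_i\ge 2$, so Theorem \ref{thm:p2} applies to each of these $t$-parametrised FIOs and, since by assumption $m_i<s_i<\textarc{m}(1,\tfrac{p(m_1+m_2)}{m_i},q_i)$, gives $\|\sum_j\varepsilon_j(t)T_{\theta^{j,i}_{k,l},\varphi_i}h\|_{L^{r_i}_{x,t}}\lesssim(1+|k|^{M_i})(1+|(k,l)|^{M})^{-m_i/(m_1+m_2)}\|h\|_{L^{q_i}}$. Plugging these two estimates back, $\|T_a(f,g)\|_{L^r}^{R}$ is majorised by $\|f\|_{L^{q_1}}^{R}\|g\|_{L^{q_2}}^{R}\sum_{(k,l)}\big((1+|k|^{M_1})(1+|l|^{M_2})(1+|(k,l)|^{M})^{-1}\big)^{R}$, which converges once $M$ is taken large enough; this settles the bilinear case and hence, by the opening reduction, the first assertion for all $N$. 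For the last assertion, with $a\in L^\infty S^{m}_1(n,N)$ and phases that are strongly non-degenerate but merely inhomogeneous with bounded derivatives of order $\ge2$, I would run the identical decomposition: after rescaling, each linear building block has an amplitude in $S^{0}_{0,0}$ and a phase of exactly the type covered by the $L^2$-boundedness theorem of Asada and Fujiwara \cite{AsadFuji}, which then plays the role of Theorem \ref{thm:p2}, and the summation over $(k,l)$ closes as before.

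The step I expect to be the main obstacle is the verification of the membership $\sigma^{i}_{k,l}(t,\cdot,\cdot)\in L^{\,p(m_1+m_2)/m_i}S^{s_i}_1$ with seminorm bounds that are uniform in $t$ and only polynomially large in $k$ and $l$: one must track carefully how the growth $1+|k|^{|\alpha|}$ produced by differentiating $e^{i\langle 2^{-j}k,\xi\rangle}$ competes against the decay $(1+|(k,l)|)^{-M}$ of the Fourier coefficients and against the fractional power $|a^{j}_{k,l}|^{m_i/(m_1+m_2)}$, so that after invoking Theorem \ref{thm:p2} the residual double series over $(k,l)$ remains summable. Proving the clean coefficient estimate $\|a^{j}_{k,l}\|_{L^p}\lesssim 2^{j(m_1+m_2)}(1+|(k,l)|)^{-M}$ and checking $p(m_1+m_2)/m_i\ge 2$ under the stated hypothesis are the two points where the product structure of $L^p_\Pi$ and the arithmetic condition on the $m_j$'s enter essentially.
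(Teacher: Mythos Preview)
Your proposal is correct and follows essentially the same route as the paper's proof: the joint Littlewood--Paley decomposition in $(\xi,\eta)$, rescaling and Fourier expansion on $\T^{2n}$, the multiplicative splitting of $|a^j_{k,l}(x)|$ via the exponents $m_i/(m_1+m_2)$, Cauchy--Schwarz in $j$ plus H\"older in $x$, Khinchine's inequality to pass to a randomized linear FIO, the verification that $\sigma^i_{k,l}\in L^{p(m_1+m_2)/m_i}S^{s_i}_1$ so that Theorem~\ref{thm:p2} applies (using $p(m_1+m_2)/m_i\ge 2$), and finally Asada--Fujiwara for the inhomogeneous $L^2$ case---all coincide with the paper's argument. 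The obstacle you flag (uniform-in-$t$, polynomially-controlled seminorms for $\sigma^i_{k,l}$) is exactly the technical step the paper isolates as well.
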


\begin{remark} The theorem above does not cover the cases $p=\infty$ or when at least one $q_j=\infty$, but Theorem \ref{thm: bilinear product type operators} fills that gap at least in the bilinear case.
\end{remark}

An immediate consequence of the theorem is the following extensions of Theorem \ref{thm: grafakos-peloso} and Proposition \ref{prop: bernicot} due to Grafakos and Peloso\cite{GrafPelo} and Bernicot \cite{Bernicot} respectively. 

\begin{cor} With the same assumptions as in Theorem \ref{Thm:generalmultilinear}, we have that the multilinear FIO $T_a$ is continuous from $H^{s_1,q_1} \times \ldots\times H^{s_N,q_N}$ to $L^r$ for every $s_j>0$ for $j=1,\ldots,N$, provided that
$m_j\leq \textarc{m}(1,\frac{p(\sum_{k=1}^{N}m_k)}{m_j},q_j)$ for $j=1,\dots, N$, 
\begin{proof} It suffices to observe that, for $s_j>0$ then 
\[
	a(x,\xi_1,\ldots,\xi_N)\prod_{j=1}^N \p{\xi_j}^{-s_j}\in L^p_\Pi S^{(m_1-s_1,\ldots, m_N-s_N)}_{(1,\ldots,1)}(n,N),
\]
and apply Theorem \ref{Thm:generalmultilinear}.
\end{proof}
\end{cor}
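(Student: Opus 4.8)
The plan is to deduce this from the $L^{q_1}\times\dots\times L^{q_N}\to L^r$ boundedness of Theorem~\ref{Thm:generalmultilinear} by absorbing Bessel potentials into the amplitude. The key is to transfer the smoothing built into the spaces $H^{s_j,q_j}$ onto $a$ in such a way that it only \emph{improves} the order $m_j$, while leaving intact the arithmetic relations among the $m_j$, $p$ and $q_j$ on which Theorem~\ref{Thm:generalmultilinear} rests.

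Concretely, I would fix $t\in\big(0,\min_{1\le j\le N}s_j/\abs{m_j}\big]$, which is a nonempty interval because every $s_j>0$ and every $m_j<0$. For $\sigma>0$ the operator $(1-\Delta)^{-\sigma/2}$ is convolution with the Bessel kernel $G_\sigma\in L^1(\R^n)$, hence bounded on $L^q$ for all $1\le q\le\infty$, which gives the continuous inclusions $H^{s_j,q_j}\hookrightarrow H^{t\abs{m_j},q_j}$; so it suffices to prove the estimate with each $s_j$ replaced by $t\abs{m_j}$. Given $f_j\in H^{t\abs{m_j},q_j}$, set $g_j:=(1-\Delta)^{t\abs{m_j}/2}f_j\in L^{q_j}$, so that $\widehat{f_j}(\xi_j)=\p{\xi_j}^{-t\abs{m_j}}\widehat{g_j}(\xi_j)$ and $\norm{g_j}_{L^{q_j}}=\norm{f_j}_{H^{t\abs{m_j},q_j}}\lesssim\norm{f_j}_{H^{s_j,q_j}}$. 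Substituting into \eqref{eq:multilinearFIO} yields $T_a(f_1,\dots,f_N)=T_{\tilde a}(g_1,\dots,g_N)$ with the same phases $\varphi_j$ and
\[
\tilde a(x,\xi_1,\dots,\xi_N)=a(x,\xi_1,\dots,\xi_N)\prod_{j=1}^N\p{\xi_j}^{-t\abs{m_j}}.
\]
Since $\abs{\d_{\xi_j}^{\gamma}\p{\xi_j}^{-t\abs{m_j}}}\lesssim\p{\xi_j}^{-t\abs{m_j}-\abs{\gamma}}$ and the new factors are independent of $x$, a Leibniz computation in each frequency block (as in Lemma~\ref{lem:calculus_lemma} and the examples following Definition~\ref{definition multilinear product type amplitudes}) shows $\tilde a\in L^p_\Pi S^{((1+t)m_1,\dots,(1+t)m_N)}_{(1,\dots,1)}(n,N)$; here $m_j-t\abs{m_j}=(1+t)m_j$ because $m_j<0$.

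It remains to check that $\tilde a$, with orders $\tilde m_j:=(1+t)m_j$, meets the hypotheses of Theorem~\ref{Thm:generalmultilinear}, and this is the step I would single out as the one to get right, since it is the reason the \emph{proportional} shift $t\abs{m_j}$ — rather than a uniform shift — is chosen. With this choice: $\tilde m_j<0$ trivially; the factor $1+t>0$ cancels in $\frac{\sum_k\tilde m_k}{\min_k\tilde m_k}=\frac{\sum_k m_k}{\min_k m_k}\ge\frac2p$ and in $\frac{p\sum_k\tilde m_k}{\tilde m_j}=\frac{p\sum_k m_k}{m_j}$; the relation $\tfrac1r=\tfrac1p+\sum_j\tfrac1{q_j}$ and the hypotheses on the $\varphi_j$ are untouched; and the required strict inequality $\tilde m_j<\textarc{m}\big(1,\tfrac{p\sum_k m_k}{m_j},q_j\big)$ follows from the standing hypothesis $m_j\le\textarc{m}\big(1,\tfrac{p\sum_k m_k}{m_j},q_j\big)$ since $(1+t)m_j=m_j+tm_j<m_j$. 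Theorem~\ref{Thm:generalmultilinear} applied to $T_{\tilde a}$ then gives $\norm{T_{\tilde a}(g_1,\dots,g_N)}_{L^r}\lesssim\prod_j\norm{g_j}_{L^{q_j}}\lesssim\prod_j\norm{f_j}_{H^{s_j,q_j}}$, which is the assertion. (Had one shifted every order by the same $\varepsilon$, one would be forced to track how $\textarc{m}(1,\cdot,q)$ jumps across the three regimes of Definition~\ref{definition rune M}; keeping the middle slot of $\textarc{m}$ fixed sidesteps this completely, so in fact no monotonicity or case analysis of $\textarc{m}$ is needed.)
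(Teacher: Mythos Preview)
Your argument is correct and follows the same core idea as the paper---absorb the Bessel potentials $\p{\xi_j}^{-s_j}$ into the amplitude and then invoke Theorem~\ref{Thm:generalmultilinear}---but your implementation is more careful than the paper's two-line proof. The paper simply shifts $m_j\mapsto m_j-s_j$ and asserts that Theorem~\ref{Thm:generalmultilinear} applies; it does not explicitly revisit the structural hypotheses $\frac{\sum_k\tilde m_k}{\min_k\tilde m_k}\ge\frac2p$ or $\tilde m_j<\textarc{m}\bigl(1,\frac{p\sum_k\tilde m_k}{\tilde m_j},q_j\bigr)$, both of which involve the new orders through ratios that change under an arbitrary shift. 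Your proportional shift $m_j\mapsto(1+t)m_j$ is a neat device that keeps every such ratio---hence the middle argument of $\textarc{m}$ and the $\ge 2/p$ condition---literally unchanged, so the strict inequality needed for Theorem~\ref{Thm:generalmultilinear} drops out of $m_j\le\textarc{m}(\cdot)$ and $(1+t)m_j<m_j$ with no case analysis. The cost is the preliminary Sobolev embedding $H^{s_j,q_j}\hookrightarrow H^{t|m_j|,q_j}$, which you justify via the $L^1$ Bessel kernel; the gain is that you close the argument cleanly where the paper leaves the verification implicit.
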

\begin{cor}  Let $0<r<\infty$, $1\leq q_1,\ldots, q_N<\infty$ satisfying $\frac{1}{r}=\sum_{j=1}^{N}\frac{1}{q_j}$. Assume that $\varphi_j\in \Phi^2,$ $j=1, \dots, N,$ satisfy the SND condition and belong to the class $\Phi^2$. If $a\in L^\infty S^m_1(n,N)$ with
\[
    m<-(n-1)\sum_{j=1}^{N}\abs{\frac{1}{q_j}-\frac{1}{2}},
\]
then the multilinear FIO given by \eqref{eq:multilinearFIO} satisfies the estimate
\[
    \norm{T_a(f_1,\dots, f_N)}_{L^r}\leq C_{a,n} \norm{f_1}_{L^{q_1}}\dots\norm{f_N}_{L^{q_N}}.
\]
\begin{proof}
Indeed if $p=\infty$ and $\varrho=1$ then $\textarc{m}(1,\infty ,q_j)= -(n-1)|\frac{1}{q_{j}}-\frac{1}{2}|$ and since according to Example \ref{interesting example}, $L^{\infty} S^{m}_1(n,N)\subset \bigcap_{m_1+\dots+m_N=m} L^{\infty}_\Pi S^{(m_1,\dots,m_N)}_{(1,\dots,1)}(n,N)$, for $m_j <0$, the previous theorem yields the result.
\end{proof}
\end{cor}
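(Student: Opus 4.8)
The plan is to obtain this corollary as a direct specialisation of Theorem~\ref{Thm:generalmultilinear} to the endpoint case $p=\infty$, $\varrho=1$, after two elementary observations. First, since $m\leq 0$, Example~\ref{interesting example} gives the inclusion $L^\infty S^m_1(n,N)\subset\bigcap_{m_1+\dots+m_N=m}L^\infty_\Pi S^{(m_1,\dots,m_N)}_{(1,\dots,1)}(n,N)$, so we are free to realise $a$ as an amplitude of product type with \emph{any} nonpositive splitting $m=m_1+\dots+m_N$ of its order. Second, I would evaluate the threshold $\textarc{m}(1,\infty,q_j)$ from Definition~\ref{definition rune M}: as $p=\infty$ forces $p'=1\leq q_j$, for each $j$ one is in either the regime $2\leq p,q$ or the regime $p>2$, $p'\leq q\leq 2$, and in both of them the formula collapses (using $\varrho=1$ and $\frac2p=0$) to $\textarc{m}(1,\infty,q_j)=-(n-1)\bigl|\tfrac1{q_j}-\tfrac12\bigr|$. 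Finally, since every $m_k$ is strictly negative, the ratio $\tfrac{\sum_k m_k}{m_j}$ is a positive real number, so $\tfrac{p(\sum_k m_k)}{m_j}=\infty$ and the per-factor threshold appearing in the hypothesis of Theorem~\ref{Thm:generalmultilinear} is again $\textarc{m}(1,\infty,q_j)=-(n-1)\bigl|\tfrac1{q_j}-\tfrac12\bigr|$.

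With this preparation the proof reduces to bookkeeping. Write $c_j=-(n-1)\bigl|\tfrac1{q_j}-\tfrac12\bigr|\leq 0$ and $\mu=\sum_{j=1}^N c_j-m$, which is strictly positive exactly by the hypothesis $m<-(n-1)\sum_{j=1}^N\bigl|\tfrac1{q_j}-\tfrac12\bigr|$. Setting $m_j=c_j-\mu/N$ gives numbers with $\sum_{j=1}^N m_j=m$, with $m_j<c_j$ (so the per-factor condition $m_j<\textarc{m}(1,\infty,q_j)$ holds strictly), and with $m_j<0$ for every $j$. The remaining hypotheses of Theorem~\ref{Thm:generalmultilinear} are immediate for $p=\infty$: the balance relation $\frac1r=\sum_{j=1}^N\frac1{q_j}$ is precisely $\frac1r=\frac1p+\sum_{j=1}^N\frac1{q_j}$, the side condition $\frac{\sum_j m_j}{\min_j m_j}\geq\frac2p$ holds since its right-hand side vanishes while its left-hand side is positive, and the phase functions $\varphi_j\in\Phi^2$ satisfy the SND condition by assumption.

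Invoking Theorem~\ref{Thm:generalmultilinear} with $p=\infty$, with $a$ regarded as an element of $L^\infty_\Pi S^{(m_1,\dots,m_N)}_{(1,\dots,1)}(n,N)$ for the splitting just chosen, and with $1\leq q_j<\infty$, then yields $\norm{T_a(f_1,\dots,f_N)}_{L^r}\lesssim\prod_{j=1}^N\norm{f_j}_{L^{q_j}}$, which is the claim. I expect the only non-routine point of this argument to be the reduction itself---namely recognising that the scalar order hypothesis on $m$ is exactly the condition guaranteeing the existence of a product-type splitting meeting, factor by factor, the hypotheses of Theorem~\ref{Thm:generalmultilinear}---together with the short case analysis needed to verify that $\textarc{m}(1,\infty,q_j)=-(n-1)\bigl|\tfrac1{q_j}-\tfrac12\bigr|$; once these are in place the corollary follows with no further work.
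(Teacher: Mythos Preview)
Your proof is correct and follows essentially the same route as the paper's: both reduce to Theorem~\ref{Thm:generalmultilinear} with $p=\infty$, $\varrho=1$, using the inclusion of Example~\ref{interesting example} and the identity $\textarc{m}(1,\infty,q_j)=-(n-1)\bigl|\tfrac1{q_j}-\tfrac12\bigr|$. You supply more detail than the paper does---in particular the explicit splitting $m_j=c_j-\mu/N$ and the verification of the side condition $\tfrac{\sum_k m_k}{\min_k m_k}\geq\tfrac2p$---but the underlying argument is the same.
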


\subsubsection{Boundedness of smooth bilinear FIOs}
\begin{thm}\label{thm:smoothbilinearFIO} Let $1< q_1,q_2<\infty$ and  $0<\frac{1}{r}=\frac{1}{q_1}+\frac{1}{q_2}$. Assume that  $\varphi_1,\varphi_2\in \Phi^2$ satisfy the non-degeneracy condition and let $a\in S^{m}_{\varrho,1-\varrho}(n,2)$ with $\frac{1}{2} \leq \varrho\leq 1$, is compactly supported in the $x$-variable and
\[
    m<\min\brkt{(\varrho-n)\abs{\frac{1}{q_1}-\frac{1}{2}}+\textarc{m}(\varrho,q_1,q_2),(\varrho-n)\abs{\frac{1}{q_2}-\frac{1}{2}}+\textarc{m}(\varrho,q_2,q_1)}.
\]
Then the bilinear FIO $T_a$ defined by \eqref{Intro:bilinear_Fourier integral operator} satisfies
\begin{equation}\label{eq:boundedness}
    \norm{T_a(f,g)}_{L^r}\leq C_{a,n} \norm{f}_{L^{q_1}}\norm{g}_{L^{q_2}}, \quad \text{for every $f,\,g\in \S$}.
\end{equation}

A global $L^2\times L^2$ to $L^1$ boundedness result is valid for $\varphi_1,\varphi_2\in \Phi^2$ satisfying the SND condition and $a\in S^{m}_{\varrho,\delta}(n,2)$ with  $0\leq \varrho,\delta\leq 1$ and
\[
	m<\frac{n(\varrho-1)}{2}+\frac{n\min(\varrho-\delta,0)}{2}.
\]
\begin{proof} Let $m$ be as in the statement of the theorem. Let $\chi\in C^\infty (\R)$ such that $0\leq \chi\leq 1$, $\supp \chi \in [-2,2]$ and $\chi(s)+\chi(1/s)=1$ for $s>0$. Define
\[
	a_1(x,\xi,\eta)=a(x,\xi,\eta) \chi\brkt{\frac{\p{\xi}^2}{\p{\eta}^2}},\quad  a_2(x,\xi,\eta)=a(x,\xi,\eta) \chi\brkt{\frac{\p{\eta}^2}{\p{\xi}^2}} .
\]
It is easy to see that $a_1,a_2\in S^{m}_{\varrho, 1-\varrho}(n,2)$ and $a_1+a_2=a$. So, it suffices to prove that $T_{a_1}$ and $T_{a_2}$ satisfy $\eqref{eq:boundedness}$.  Observe that since $a_1$ is supported in the region $\abs{\xi}^2\leq 1+2\abs{\eta}^2$, one has for any multi-indices $\alpha,\beta, \gamma$,
\[
	\abs{\d_x^\gamma \d_\xi^\alpha \d_\eta^\beta a_1(x,\xi,\eta)}\lesssim \p{\xi}^{m_1-\varrho\abs{\alpha}}\p{\eta}^{m_2-\varrho\abs{\eta}+(1-\varrho)\abs{\delta}},
\]
for $m=m_1+m_2$ with $m_1<\textarc{m}(\varrho,q_1,q_2)$ and $m_2<(\varrho-n)\abs{\frac{1}{q_1}-\frac{1}{2}}$. In particular, if we fix $\xi$, $\d^{\beta}_\xi a_2(x,\xi,\eta)\in S^{m_2}_{\varrho,1-\varrho}$ and has compact spatial support. Therefore for fixed  $g\in \S$, \cite[Theorem 5.1]{SSS} implies that
\[
	a_{g}(x,\xi)=\int a(x,\xi,\eta) e^{i\varphi_2(x,\eta)} \widehat{g}(\eta)\dd \eta,
\]
belongs to $L^{q_2}S^{m_1}_{\varrho}$ and  $\norm{\d_\xi^\beta a_{g}(\cdot,\eta)}_{L^{q_2}}\lesssim \p{\xi}^{m_1-\varrho\abs{\beta}} \norm{g}_{L^{q_1}}$. Then, applying  Theorem \ref{thm:Linear} or  \ref{thm:p2}, and taking into account Remark \ref{rem:local}, we obtain that $T_{a_2}$ satisfies \eqref{eq:boundedness}. The boundedness of $T_{a_1}$ is proved in a similar way.

The last statement is proved in an analogous way, by an iteration argument using \cite[Theorem 2.4.1]{DosStaub} in the first step instead of \cite[Theorem 5.1]{SSS}, therefore we omit the details.
\end{proof}
\end{thm}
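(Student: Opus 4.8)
The plan is to follow the iteration scheme used in the proof of Theorem~\ref{thm: bilinear product type operators}, writing $T_a$ as a composition of two linear Fourier integral operators, but with the innermost one now handled by a \emph{smooth}-amplitude theorem (Seeger--Sogge--Stein) rather than a rough-amplitude one. First I would split the amplitude according to the two frequency regimes $\abs{\xi}\lesssim\abs{\eta}$ and $\abs{\eta}\lesssim\abs{\xi}$. Fix $\chi\in C^{\infty}(\R)$ with $0\le\chi\le 1$, $\supp\chi\subset[-2,2]$ and $\chi(s)+\chi(1/s)=1$ for $s>0$, and set
\[
	a_1(x,\xi,\eta)=a(x,\xi,\eta)\,\chi\brkt{\frac{\p{\xi}^2}{\p{\eta}^2}},\qquad
	a_2(x,\xi,\eta)=a(x,\xi,\eta)\,\chi\brkt{\frac{\p{\eta}^2}{\p{\xi}^2}}.
\]
Then $a=a_1+a_2$, both pieces are compactly supported in $x$, and — since on the transition region $\abs{\xi}\approx\abs{\eta}$ each $\xi$- or $\eta$-derivative of the cutoff gains a factor $\lesssim(1+\abs{\xi}+\abs{\eta})^{-1}\le(1+\abs{\xi}+\abs{\eta})^{-\varrho}$ — a routine check gives $a_1,a_2\in S^{m}_{\varrho,1-\varrho}(n,2)$. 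Hence it suffices to prove the estimate for $T_{a_1}$ and, symmetrically, for $T_{a_2}$.

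Consider $a_1$, supported where $\p{\xi}^2\le 2\p{\eta}^2$, so that $1+\abs{\xi}+\abs{\eta}\approx\p{\eta}$ there. Write $m=m_1+m_2$ with $m_1,m_2<0$ to be fixed later. On $\supp a_1$ one has $\p{\eta}^{m_1-\varrho\abs{\alpha}}\lesssim\p{\xi}^{m_1-\varrho\abs{\alpha}}$ (the exponent being $\le 0$), whence
\[
	\abs{\d_x^{\gamma}\d_\xi^{\alpha}\d_\eta^{\beta}a_1(x,\xi,\eta)}\lesssim\p{\xi}^{m_1-\varrho\abs{\alpha}}\,\p{\eta}^{m_2-\varrho\abs{\beta}+(1-\varrho)\abs{\gamma}}.
\]
Therefore, for each fixed $\xi$, the function $\d_\xi^{\alpha}a_1(\,\cdot\,,\xi,\,\cdot\,)$ equals $\p{\xi}^{m_1-\varrho\abs{\alpha}}$ times a symbol of class $S^{m_2}_{\varrho,1-\varrho}$ in the variables $(x,\eta)$, with compact spatial support and seminorms uniform in $\xi$. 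Since $\varrho\in[\tfrac12,1]$, $\varphi_2\in\Phi^2$ is non-degenerate, and the spatial support is compact, the local $L^p$-boundedness of Fourier integral operators of Seeger--Sogge--Stein \cite[Theorem~5.1]{SSS} applies and shows that, provided $m_2<(\varrho-n)\abs{\tfrac1{q_2}-\tfrac12}$, the partial operator
\[
	a_g(x,\xi)=\int a_1(x,\xi,\eta)\,e^{i\varphi_2(x,\eta)}\,\widehat g(\eta)\,\dd\eta
\]
satisfies $\norm{\d_\xi^{\alpha}a_g(\,\cdot\,,\xi)}_{L^{q_2}}\lesssim\p{\xi}^{m_1-\varrho\abs{\alpha}}\norm{g}_{L^{q_2}}$ for all $g\in\S$; equivalently $a_g\in L^{q_2}S^{m_1}_\varrho$ with $\abs{a_g}_{q_2,m_1,s}\lesssim\norm{g}_{L^{q_2}}$ for every $s$.

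Now $T_{a_1}(f,g)=T_{a_g}f$ is a rough \emph{linear} FIO with amplitude $a_g\in L^{q_2}S^{m_1}_\varrho$, compactly supported in $x$, and phase $\varphi_1\in\Phi^2$ non-degenerate; invoking Remark~\ref{rem:local} I may apply Theorem~\ref{thm:Linear}, and Theorem~\ref{thm:p2} when $q_2\ge 2$, to obtain for $m_1<\textarc{m}(\varrho,q_2,q_1)$ the bound $\norm{T_{a_g}f}_{L^{r}}\lesssim\norm{f}_{L^{q_1}}\norm{g}_{L^{q_2}}$ with $\tfrac1r=\tfrac1{q_2}+\tfrac1{q_1}$. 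A choice of $m_1,m_2$ with $m=m_1+m_2$, $m_1<\textarc{m}(\varrho,q_2,q_1)$ and $m_2<(\varrho-n)\abs{\tfrac1{q_2}-\tfrac12}$ is possible exactly when $m<(\varrho-n)\abs{\tfrac1{q_2}-\tfrac12}+\textarc{m}(\varrho,q_2,q_1)$. Handling $a_2$ in the same way, with the roles of $(\xi,\varphi_1,q_1)$ and $(\eta,\varphi_2,q_2)$ interchanged, gives boundedness of $T_{a_2}$ whenever $m<(\varrho-n)\abs{\tfrac1{q_1}-\tfrac12}+\textarc{m}(\varrho,q_1,q_2)$. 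Taking $m$ below the minimum of these two quantities yields \eqref{eq:boundedness}.

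For the concluding $L^2\times L^2\to L^1$ statement the same two-step iteration is used, the only change being in the inner step: no spatial cutoff is available and the phases are only strongly non-degenerate, so \cite[Theorem~5.1]{SSS} is replaced by the global $L^2$-boundedness of Fourier integral operators with amplitudes in $S^{m_2}_{\varrho,\delta}$ and strongly non-degenerate $\Phi^2$ phases, \cite[Theorem~2.4.1]{DosStaub}, which holds for $m_2<\tfrac n2\min(\varrho-\delta,0)$; this produces $a_g\in L^{2}S^{m_1}_\varrho$, and Theorem~\ref{thm:p2} with $p=q=2$ and $r=1$ (for which $\textarc{m}(\varrho,2,2)=\tfrac{n(\varrho-1)}{2}$) then closes the estimate as soon as $m_1<\tfrac{n(\varrho-1)}{2}$, i.e.\ whenever $m<\tfrac{n(\varrho-1)}{2}+\tfrac n2\min(\varrho-\delta,0)$. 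The main obstacle — and the reason the frequency splitting is indispensable — lies in the second step above: the joint estimate $(1+\abs{\xi}+\abs{\eta})^{m-\varrho(\abs{\alpha}+\abs{\beta})+\delta\abs{\gamma}}$ does not, in general, factor as $\p{\xi}^{m_1-\varrho\abs{\alpha}}\p{\eta}^{m_2-\varrho\abs{\beta}+\delta\abs{\gamma}}$, and only on a region where one of the two frequencies controls the other can the amplitude, with the remaining variable frozen, be regarded as a genuine symbol of negative order in that frequency whose seminorms are bounded uniformly by a power of the frozen frequency — which is precisely what the application of \cite[Theorem~5.1]{SSS} (respectively \cite[Theorem~2.4.1]{DosStaub}) requires.
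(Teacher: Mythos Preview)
Your proof is correct and follows essentially the same approach as the paper's: the same $\chi$-based frequency splitting into $a_1+a_2$, the same iteration scheme (freeze one frequency, apply \cite[Theorem~5.1]{SSS} to the inner integral to obtain a rough amplitude in $L^{q_j}S^{m_1}_\varrho$, then apply Theorems~\ref{thm:Linear}/\ref{thm:p2} via Remark~\ref{rem:local} to the outer FIO), and the same substitution of \cite[Theorem~2.4.1]{DosStaub} for the global $L^2\times L^2\to L^1$ case. Your indexing of the constraints for $a_1$ versus $a_2$ is in fact cleaner than the paper's own write-up, which contains a few typographical swaps of $a_1/a_2$ and $q_1/q_2$ that you have implicitly corrected.
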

\subsection{Boundedness of bilinear $\Psi$DOs}\label{bilinear PSDO}

In this section we shall apply the boundedness of the linear $\Psi$DOs to the problem of boundedness of bilinear operators.

Following the same idea as in the proof of Theorem \ref{thm: bilinear product type operators} and using Theorem \ref{thm:LinearPDO} we can obtain results for bilinear pseudodifferential operators. In particular, this yields the following extension of  \cite[Thm. 3.3]{MRS} for the case $w=\mu=1$.

\begin{thm} Let $1\leq p, q_1,q_2\leq \infty$ satisfying the relation 
$
    \frac{1}{r}=\frac{1}{p}+\frac{1}{q_1}+\frac{1}{q_2}.
$
Let $q_{\rm max}=\max(q_1,q_2)$ and $q_{\rm min}=\min(q_1,q_2)$. Assume that $q_{\rm max} \geq p'$ and let $a\in L^p S^m_\varrho(n,2)$ with $0\leq \varrho\leq 1$ and 
\[
   m< n(\varrho-1)\brkt{\frac{1}{\min(2,q_{\rm max},p)}+\frac{1}{\min(2,q_{\rm min},\frac{p q_{\rm max} }{q_{\rm max}+p})}},
\]
with the convention that, if $p=\infty$ then $\frac{p q_{\rm max}}{q_{\rm max}+p}=q_{\rm max}$.
Then the bilinear $\Psi$DO
\begin{equation}\label{eq:bilinearPSDO}
	T_{a} (f,g)(x)= \iint a(x,\xi,\eta)\, e^{ i \p{x,\xi+\eta}} \hat{ f}(\xi) \hat{g}(\eta)\,\dd \xi\, \dd \eta,
\end{equation}
satisfies the estimate
\[
    \norm{T_a(f,g)}_{L^r}\leq C_{a,n} \norm{f}_{L^{q_1}}\norm{g}_{L^{q_2}},\quad \text{for every $f,\,g\in \S$.}
\]
\end{thm}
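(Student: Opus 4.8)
The plan is to carry out the same two-step iteration used in the proof of Theorem~\ref{thm: bilinear product type operators}, but with the linear pseudodifferential bound of Theorem~\ref{thm:LinearPDO} at each step and with the observation that a symbol in $L^pS^m_\varrho(n,2)$ is, after a harmless splitting of its order, a product-type amplitude. First note that the hypothesis forces $m<0$: indeed $n(\varrho-1)\le 0$ while the bracketed sum is positive. Hence, by Example~\ref{interesting example}, $a\in L^p_\Pi S^{(m_1,m_2)}_{(\varrho,\varrho)}(n,2)$ for every splitting $m=m_1+m_2$ with $m_1,m_2\le0$; that is, $\norm{\partial^{\alpha}_\xi\partial^{\beta}_\eta a(\cdot,\xi,\eta)}_{L^p}\lesssim \p{\xi}^{m_1-\varrho\abs{\alpha}}\p{\eta}^{m_2-\varrho\abs{\beta}}$ for all $\alpha,\beta$. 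By the symmetry of the class $L^pS^m_\varrho(n,2)$ and of the phase $\p{x,\xi+\eta}$ in the two frequency variables, we may assume $q_1=q_{\rm max}$. Set $\tfrac1{r_1}=\tfrac1{q_1}+\tfrac1p$, so that $r_1=\tfrac{p q_{\rm max}}{q_{\rm max}+p}$ (with the stated convention when $p=\infty$); the hypothesis $q_{\rm max}\ge p'$ is exactly the assertion that $1\le r_1\le\infty$, and the assumption on $m$ lets us choose the splitting so that, moreover, $m_1<\tfrac{n(\varrho-1)}{\min(2,p,q_1)}$ and $m_2<\tfrac{n(\varrho-1)}{\min(2,r_1,q_2)}$.

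Now for the iteration. Given $f,g\in\S$, put $a_f(x,\eta)=\int e^{i\p{x,\xi}}a(x,\xi,\eta)\,\widehat f(\xi)\,\dd\xi$; Fubini's theorem gives $T_a(f,g)=T_{a_f}g$, where $T_{a_f}$ denotes the linear $\Psi$DO with symbol $a_f$. Fix $\eta$ and a multi-index $\beta$: the function $(x,\xi)\mapsto\partial^\beta_\eta a(x,\xi,\eta)$ belongs to the \emph{linear} class $L^pS^{m_1}_\varrho$, and the product-type estimate above gives $\abs{\partial^\beta_\eta a(\cdot,\cdot,\eta)}_{p,m_1,s}\lesssim_{s,\beta}\p{\eta}^{m_2-\varrho\abs{\beta}}$. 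Since $m_1<\tfrac{n(\varrho-1)}{\min(2,p,q_1)}$, Theorem~\ref{thm:LinearPDO} applies to the linear operator with that symbol and, its operator norm depending on only finitely many seminorms, yields (after differentiating $a_f$ under the integral sign)
\[
    \norm{\partial^\beta_\eta a_f(\cdot,\eta)}_{L^{r_1}}=\norm{T_{\partial^\beta_\eta a(\cdot,\cdot,\eta)}\,f}_{L^{r_1}}\lesssim_\beta \p{\eta}^{m_2-\varrho\abs{\beta}}\,\norm{f}_{L^{q_1}}.
\]
This says precisely that $a_f\in L^{r_1}S^{m_2}_\varrho$ with $\abs{a_f}_{r_1,m_2,s}\lesssim_s\norm{f}_{L^{q_1}}$.

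Finally, apply Theorem~\ref{thm:LinearPDO} a second time, to $T_{a_f}$: since $1\le r_1\le\infty$, $a_f\in L^{r_1}S^{m_2}_\varrho$, and $m_2<\tfrac{n(\varrho-1)}{\min(2,r_1,q_2)}$, the operator $T_{a_f}$ is bounded from $L^{q_2}$ to $L^r$ with $\tfrac1r=\tfrac1{q_2}+\tfrac1{r_1}=\tfrac1p+\tfrac1{q_1}+\tfrac1{q_2}$, so that
\[
    \norm{T_a(f,g)}_{L^r}=\norm{T_{a_f}g}_{L^r}\lesssim\abs{a_f}_{r_1,m_2,s}\,\norm{g}_{L^{q_2}}\lesssim\norm{f}_{L^{q_1}}\norm{g}_{L^{q_2}},
\]
which is the claim. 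I expect no real obstacle: this is essentially a verbatim adaptation of the proof of Theorem~\ref{thm: bilinear product type operators}. The one point that genuinely matters is the order of the iteration --- one must peel off the \emph{larger} exponent $q_{\rm max}$ first, so that the intermediate Lebesgue exponent $r_1=\tfrac{p q_{\rm max}}{q_{\rm max}+p}$ is at least $1$ (which is why $q_{\rm max}\ge p'$ is assumed) and $a_f$ is an admissible rough symbol for the second application of Theorem~\ref{thm:LinearPDO} --- together with the routine bookkeeping showing that the seminorm bounds, hence the operator norms, stay under control through both steps.
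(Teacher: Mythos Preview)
Your proposal is correct and follows essentially the same route as the paper: the paper simply remarks that the result is obtained by carrying out the iteration argument of Theorem~\ref{thm: bilinear product type operators} with Theorem~\ref{thm:LinearPDO} in place of the FIO bounds, which is exactly what you do. Your explicit use of Example~\ref{interesting example} to pass from $L^pS^m_\varrho(n,2)$ to the product-type class, together with the verification that $q_{\max}\ge p'$ guarantees $r_1\ge1$, fills in the details the paper leaves implicit.
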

\begin{thm} Let $1\leq q_1,q_2\leq \infty$, $0<r<\infty$ with $\frac{1}r =\frac 1 {q_1}+\frac 1 {q_2}$.  Suppose that $a\in S^{m}_{\varrho,\delta}(n,2)$, $0<\varrho,\delta\leq 1$, $\delta<1$ and 
\[
	m<n(\varrho-1)\sqbrkt{\max\brkt{\abs{\frac{1}{2}-\frac{1}{q_1}},
	\abs{\frac{1}{2}-\frac{1}{q_2}}}+\frac{1}{\min(2,q_1,q_2)}}+\frac{n\min(\varrho-\delta,0)}{2},
\]
Then the associated bilinear pseudodifferential operator $T_a$ defined by \eqref{eq:bilinearPSDO}
satisfies 
\[
    \norm{T_a(f,g)}_{L^r}\lesssim \norm{f}_{L^{q_1}}\norm{g}_{L^{q_2}}. 
\]
\begin{proof} The proof is similar to that of Theorem \ref{thm:smoothbilinearFIO}, decomposing the amplitude in frequency regions and using \cite[Theorem 3.2]{AlvarezHounie} in the first step of the iteration argument and Theorem \ref{thm:LinearPDO} in the second step.
\end{proof}
\end{thm}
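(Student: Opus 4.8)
The plan is to follow the strategy of Theorem~\ref{thm:smoothbilinearFIO}: split the bilinear symbol according to which frequency variable dominates, and then run an iteration in which one frequency variable is integrated out as a \emph{linear} operator while the remaining one is treated by a \emph{linear} theorem. First I would fix $\chi\in C^{\infty}(\R)$ with $0\leq\chi\leq1$, $\supp\chi\subset[-2,2]$ and $\chi(s)+\chi(1/s)=1$ for $s>0$, and decompose $a=a_1+a_2$, where
\[
    a_1(x,\xi,\eta)=a(x,\xi,\eta)\,\chi\!\brkt{\p{\xi}^2/\p{\eta}^2},\qquad a_2(x,\xi,\eta)=a(x,\xi,\eta)\,\chi\!\brkt{\p{\eta}^2/\p{\xi}^2}.
\]
Then $a_1,a_2\in S^{m}_{\varrho,\delta}(n,2)$, with $a_1$ supported where $\p{\xi}\lesssim\p{\eta}$ and $a_2$ where $\p{\eta}\lesssim\p{\xi}$. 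Since the hypotheses and the conclusion are symmetric under the interchange $(f,\xi,q_1)\leftrightarrow(g,\eta,q_2)$, it is enough to bound $T_{a_1}$; the estimate for $T_{a_2}$ is obtained by exchanging $q_1$ and $q_2$, and requiring both at once is exactly what forces the term $\max\brkt{\abs{\tfrac12-\tfrac1{q_1}},\abs{\tfrac12-\tfrac1{q_2}}}$ in the stated condition on $m$.

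Next I would split the order as $m=m_1+m_2$ with
\[
    m_1<\frac{n(\varrho-1)}{\min(2,q_1,q_2)},\qquad m_2<n(\varrho-1)\abs{\tfrac12-\tfrac1{q_2}}+\frac{n\min(\varrho-\delta,0)}{2};
\]
the hypothesis on $m$ is precisely what makes such a splitting available. On $\supp a_1$ one has $\p{\xi}\lesssim\p{\eta}$, so for all multi-indices $\alpha,\beta,\gamma$
\[
    \abs{\d_x^{\gamma}\d_\xi^{\alpha}\d_\eta^{\beta}a_1(x,\xi,\eta)}\lesssim\p{\xi}^{m_1-\varrho\abs{\alpha}}\,\p{\eta}^{m_2-\varrho\abs{\beta}+\delta\abs{\gamma}},
\]
and consequently, for each fixed $\xi$, the map $(x,\eta)\mapsto\d_\xi^{\alpha}a_1(x,\xi,\eta)$ lies in $S^{m_2}_{\varrho,\delta}$ with symbol seminorms that are $O(\p{\xi}^{m_1-\varrho\abs{\alpha}})$, uniformly in $\xi$.

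Then I would set $a_{1,g}(x,\xi)=\int a_1(x,\xi,\eta)\,e^{i\p{x,\eta}}\,\widehat{g}(\eta)\,\dd\eta$, so that $T_{a_1}(f,g)=T_{a_{1,g}}f$, a \emph{linear} rough $\Psi$DO. For fixed $\xi$, the function $x\mapsto\d_\xi^{\alpha}a_{1,g}(x,\xi)$ is the image of $g$ under the $\Psi$DO with symbol $\d_\xi^{\alpha}a_1(\cdot,\xi,\cdot)\in S^{m_2}_{\varrho,\delta}$; hence the $L^{q_2}$-boundedness of $\Psi$DOs with symbols in $S^{m_2}_{\varrho,\delta}$ for the above range of $m_2$ --- this is \cite[Theorem~3.2]{AlvarezHounie}, in which the summand $\tfrac n2\min(\varrho-\delta,0)$ is exactly the loss incurred when $\delta>\varrho$ --- together with differentiation under the integral sign, gives $\norm{\d_\xi^{\alpha}a_{1,g}(\cdot,\xi)}_{L^{q_2}}\lesssim\p{\xi}^{m_1-\varrho\abs{\alpha}}\norm{g}_{L^{q_2}}$. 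In other words $a_{1,g}\in L^{q_2}S^{m_1}_\varrho$ with $\abs{a_{1,g}}_{q_2,m_1,s}\lesssim\norm{g}_{L^{q_2}}$ for every $s\in\Z_+$. It then remains to apply Theorem~\ref{thm:LinearPDO} to $T_{a_{1,g}}$: its symbol is in $L^{q_2}S^{m_1}_\varrho$ and, with $\frac1r=\frac1{q_1}+\frac1{q_2}$, the order condition there, $m_1<\frac{n(\varrho-1)}{\min(2,q_2,q_1)}$, is exactly the one arranged above, so
\[
    \norm{T_{a_1}(f,g)}_{L^r}=\norm{T_{a_{1,g}}f}_{L^r}\lesssim\abs{a_{1,g}}_{q_2,m_1,s}\,\norm{f}_{L^{q_1}}\lesssim\norm{f}_{L^{q_1}}\norm{g}_{L^{q_2}}.
\]
Adding the symmetric estimate for $T_{a_2}$ completes the argument.

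I expect the main obstacle to be bookkeeping rather than genuine analysis: one must choose the splitting $m=m_1+m_2$ so that both iteration steps close simultaneously, verify that the frequency cut-offs keep $a_j$ in $S^{m}_{\varrho,\delta}(n,2)$ and that the bounds for $\d_\xi^{\alpha}a_1(\cdot,\xi,\cdot)$ are uniform in $\xi$, and --- the only genuinely delicate point --- make sure that the linear $\Psi$DO input from \cite{AlvarezHounie} really provides the order $n(\varrho-1)\abs{1/2-1/q_j}+\tfrac n2\min(\varrho-\delta,0)$ for the full range $1\leq q_j\leq\infty$, any $L^1$ or $L^\infty$ endpoint subtlety being absorbed at the second step by Theorem~\ref{thm:LinearPDO}.
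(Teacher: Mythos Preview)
Your proposal is correct and follows essentially the same approach as the paper: decompose the amplitude via the frequency cut-off $\chi$ as in Theorem~\ref{thm:smoothbilinearFIO}, apply \cite[Theorem~3.2]{AlvarezHounie} to integrate out the dominant frequency variable and produce a rough linear amplitude in $L^{q_2}S^{m_1}_\varrho$, and then invoke Theorem~\ref{thm:LinearPDO} on the resulting linear $\Psi$DO. The paper's own proof is a one-line pointer to exactly this strategy, so your write-up is a faithful expansion of it, including the correct identification of where the $\max$ term and the $\tfrac{n}{2}\min(\varrho-\delta,0)$ correction originate.
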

With the previous iteration argument we can obtain an improvement of \cite[Theorem 2]{BBMNT}.
\begin{thm} Let $1\leq q_1,q_2\leq \infty$, $0<r<\infty$ with $\frac{1}r =\frac 1 {q_1}+\frac 1 {q_2}$. 
Suppose that $a\in S^{m}_{\varrho,\delta}(n,2)$, $0<\varrho\leq 1$, $\delta\leq \varrho$, $\delta<1$ with 
\[
	m<n(\varrho-1)\sqbrkt{\max\brkt{\frac{1}{2},\,\frac{1}{q_1},\, \frac{1}{q_2},\, 1-\frac{1}{r}}+\frac{1}{2}\max\brkt{\frac{1}{r}-1,0}}.
\]
Then the associated bilinear pseudodifferential operator $T_a$ defined by \eqref{eq:bilinearPSDO}
satisfies the estimate
\[
    \norm{T_a(f,g)}_{L^r}\lesssim \norm{f}_{L^{q_1}}\norm{g}_{L^{q_2}},
\]

\begin{proof} We exclude the case $\varrho=1$ and $\delta<1$ since it lies in the realm of multilinear Calder\'on-Zygmund theory (see e.g. \cite{GrafTor}).

Observe that it suffices to prove the result for $\varrho=\delta<1$. Define on $S_{\varrho,\varrho}^m(n,2)\times L^{q_1}\times L^{q_2}$ the trilinear operator given by $T(a,f,g)=T_{a}(f,g)$. As a corollary of the previous result we have that,  $T$ satisfies
\begin{eqnarray*}
	T:& S^m_{\varrho,\varrho}(n,2)\times &L^{\infty}\times L^\infty \to L^\infty \quad  \text{for } \quad m<n(\varrho-1),\\
	T:& S^m_{\varrho,\varrho}(n,2)\times &L^{2}\times L^2 \to L^1 \quad \text{for } \quad m<\frac{n(\varrho-1)}{2},\\
	T:& S^m_{\varrho,\varrho}(n,2)\times &L^{1}\times L^1 \to L^{\frac{1}{2}} \quad \text{for } \quad m<\frac{3n(\varrho-1)}{2}.
\end{eqnarray*}
The first two, jointly with the symbolic calculus in \cite{BMNT} for amplitudes in $S^m_{\varrho,\varrho}(n,2)$ yield
\begin{eqnarray*}
	T:& S^m_{\varrho,\varrho}(n,2)\times &L^{1}\times L^\infty \to L^1 \quad  \text{for } \quad m<n(\varrho-1),\\
	T:& S^m_{\varrho,\varrho}(n,2)\times &L^{\infty}\times L^1 \to L^1 \quad  \text{for } \quad m<n(\varrho-1),\\
	T:& S^m_{\varrho,\varrho}(n,2)\times &L^{2}\times L^\infty \to L^2 \quad \text{for } \quad m<\frac{n(\varrho-1)}{2},\\
	T:& S^m_{\varrho,\varrho}(n,2)\times &L^{\infty}\times L^2 \to L^2 \quad \text{for } \quad m<\frac{n(\varrho-1)}{2}.
\end{eqnarray*}
The result follows by a trilinear complex interpolation argument, taking into account the interpolation properties of the class  $S^m_{\varrho,\varrho}(n,2)$ in \cite[Lemma 7]{BBMNT}.
\end{proof}
\end{thm}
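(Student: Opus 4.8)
The plan is to reduce to the diagonal case $\varrho=\delta$ and then recover the full range of exponents from a handful of endpoint estimates by trilinear interpolation. Since the hypotheses force $\delta\le\varrho$, one has $S^{m}_{\varrho,\delta}(n,2)\subseteq S^{m}_{\varrho,\varrho}(n,2)$, and the admissible order in the statement does not involve $\delta$; hence it suffices to treat $a\in S^{m}_{\varrho,\varrho}(n,2)$ with $\varrho=\delta<1$ (the fully regular case $\varrho=\delta=1$ being outside our scope, since it falls within multilinear Calder\'on--Zygmund theory, cf.\ \cite{GrafTor}). I would then regard $T(a,f,g)=T_{a}(f,g)$ as a trilinear map on $S^{m}_{\varrho,\varrho}(n,2)\times L^{q_1}\times L^{q_2}$, linear in each slot, and keep in mind the interpolation properties of the scale $\{S^{m}_{\varrho,\varrho}(n,2)\}_m$ from \cite[Lemma 7]{BBMNT}, which allow the order $m$ itself to be interpolated along analytic families of symbols.

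The next step is to assemble the endpoint estimates. Specialising the preceding theorem to $\varrho=\delta$ (so that the term $\tfrac n2\min(\varrho-\delta,0)$ drops out) yields the three ``diagonal'' bounds: $S^{m}_{\varrho,\varrho}(n,2)\times L^{\infty}\times L^{\infty}\to L^{\infty}$ for $m<n(\varrho-1)$, $S^{m}_{\varrho,\varrho}(n,2)\times L^{2}\times L^{2}\to L^{1}$ for $m<\tfrac{n(\varrho-1)}{2}$, and $S^{m}_{\varrho,\varrho}(n,2)\times L^{1}\times L^{1}\to L^{1/2}$ for $m<\tfrac{3n(\varrho-1)}{2}$. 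Invoking the bilinear symbolic calculus of \cite{BMNT} for $S^{m}_{\varrho,\varrho}(n,2)$---which shows that the formal transposes $T_{a}^{\ast 1},T_{a}^{\ast 2}$ of a bilinear $\Psi$DO are again bilinear $\Psi$DOs of the same class and order---together with the bilinear duality $\int T_{a}(f,g)\,h=\int f\,T_{a}^{\ast 1}(h,g)=\int g\,T_{a}^{\ast 2}(f,h)$, I would upgrade the first diagonal bound to $L^{1}\times L^{\infty}\to L^{1}$ and $L^{\infty}\times L^{1}\to L^{1}$ (for $m<n(\varrho-1)$) and the second to $L^{2}\times L^{\infty}\to L^{2}$ and $L^{\infty}\times L^{2}\to L^{2}$ (for $m<\tfrac{n(\varrho-1)}{2}$).

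Writing $x=1/q_1$, $y=1/q_2$, one checks that the seven points $(0,0),(1,0),(0,1),(1,1),(\tfrac12,\tfrac12),(\tfrac12,0),(0,\tfrac12)$ at which the above estimates live triangulate $[0,1]^2$ into six triangles, on each of which the function $(x,y)\mapsto\max(\tfrac12,x,y,1-x-y)+\tfrac12\max(x+y-1,0)$ is affine and coincides with the affine interpolant of its vertex values. The final step is then: locate the triangle containing $(1/q_1,1/q_2)$, write this point as a convex combination of its three vertices, and apply trilinear complex interpolation (in Calder\'on's sense, which remains available for the quasi-Banach targets $L^{r}$ with $r<1$ occurring here) to the three corresponding endpoint estimates; the admissible order is the same convex combination of the endpoint orders, namely precisely $n(\varrho-1)$ times the bracket in the statement, so that ``$m$ below the threshold'' at the endpoints forces ``$m$ below the threshold'' at the interpolated point.

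The main obstacle I anticipate is carrying out the interpolation rigorously \emph{jointly} in the symbol parameter $m$ and in the Lebesgue (quasi-)exponents: one has to set up an analytic family $z\mapsto a_{z}\in S^{m(z)}_{\varrho,\varrho}(n,2)$ whose order moves linearly while the target space moves along the chosen triangle, and control the boundary operator norms uniformly, relying on \cite[Lemma 7]{BBMNT}; the quasi-Banach range $r<1$ (already present at the $L^{1}\times L^{1}\to L^{1/2}$ vertex) requires the usual extra care. A secondary point is verifying that the transpose calculus of \cite{BMNT} genuinely preserves $S^{m}_{\varrho,\varrho}(n,2)$, which is exactly what the restriction $\delta=\varrho<1$ provides.
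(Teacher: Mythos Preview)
Your proposal is correct and follows essentially the same route as the paper's proof: reduce to $\varrho=\delta<1$, establish the three diagonal endpoint bounds from the previous theorem, obtain the four off-diagonal endpoints via the transpose calculus of \cite{BMNT}, and conclude by trilinear complex interpolation using the interpolation properties of $S^{m}_{\varrho,\varrho}(n,2)$ from \cite[Lemma~7]{BBMNT}. Your explicit triangulation of $[0,1]^2$ and verification that the threshold is affine on each triangle spells out in detail what the paper leaves implicit.
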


\end{document}